\theoremstyle{definition}
\theoremstyle{definition}
\newtheorem{rmk}{Remark}
\theoremstyle{definition}
\theoremstyle{definition}\usepackage{amsmath}
\theoremstyle{plain}
\newtheorem{thm}{Theorem}
\theoremstyle{plain}
\newtheorem{prop}{Proposition}
\theoremstyle{plain}
\theoremstyle{plain}
\newtheorem{lem}{Lemma}
\newtheorem{coro}{Corollary}
\newcommand{\Gal}{\operatorname{Gal}}
\newcommand{\Z}{\mathbb{Z}}
\newcommand{\F}{\mathbb{F}}
\newcommand{\Q}{\mathbb{Q}}
\newcommand{\C}{\mathbb{C}}
\newcommand{\Fp}{\mathbb{F}_p}
\newcommand{\Ok}{\mathcal{O}_K}
\newcommand{\Of}{\mathcal{O}_F}
\newcommand{\zk}{\zeta_K}
\newcommand{\Nm}{\operatorname{N}}
\newcommand{\lp}{\left(}
\newcommand{\rp}{\right)}
\newcommand{\N}{\mathbb{N}}
\newcommand{\pp}{\mathfrak{p}}
\newcommand{\Ol}{\mathcal{O}_L}
\newcommand{\Fq}{\mathbb{F}_q}
\author[F.~Battistoni]{Francesco Battistoni}
\address{Laboratoire de math\'{e}matiques de Besan\c{c}on\\
         Universit\'{e} Bourgogne Franche-Comt\'{e}\\
         CNRS - UMR 6623\\
         16\\ Route de Gray\\
         25030 Besan\c{c}on\\
         France}
\email{francesco.battistoni@univ-fcomte.fr}
\author[H.~Oukhaba]{Hassan Oukhaba}
\address{Laboratoire de math\'{e}matiques de Besan\c{c}on\\
         Universit\'{e} Bourgogne Franche-Comt\'{e}\\
         CNRS - UMR 6623\\
         16\\ Route de Gray\\
         25030 Besan\c{c}on\\
         France}
\email{hassan.oukhaba@univ-fcomte.fr}
\keywords{Arithmetic equivalence, global function fields, inverse Galois problem}
\subjclass[2020]{11M38, 11R58, 12F12}
\title{Arithmetic equivalence for non-geometric extensions of global function fields}
\begin{document}

\maketitle

\begin{abstract}
    In this paper we study couples of finite separable extensions of the function field $\F_q(T)$ which are arithmetically equivalent, i.e. such that prime ideals of $\Fq[T]$ decompose with the same inertia degrees in the two fields, up to finitely many exceptions. In the first part of this work, we extend previous results by Cornelissen, Kontogeorgis and Van der Zalm to the case of non-geometric extensions of $\Fq(T)$, which are fields such that their field of constants may be bigger than $\Fq$. In the second part, we explicitly produce examples of non-geometric extensions of $\F_2(T)$ which are equivalent and non-isomorphic over $\F_2(T)$ and non-equivalent over $\F_4(T)$, solving a particular Inverse Galois Problem.
\end{abstract}

\section{Introduction}
\noindent
Consider a number field $K$: every prime number $p\in\Z$ generates an ideal in the ring of integers $\Ok\subset K$ which uniquely decomposes as 
$$p\Ok = \mathfrak{p}_1^{e_1}\cdots\mathfrak{p}_r^{e_r}$$
where the ideals $\pp_i$ are prime ideals in $\Ok$. If $f_i$ is the inertia degree of $\pp_i$ over $p$, assuming $f_1\leq f_2\leq \cdots \leq f_r$, we define the \textit{splitting type of $p$ in $K$} to be the $r$-ple $f_K(p)\coloneqq (f_1,\ldots,f_r)$.

We say that two number fields $K$ and $L$ are \textit{arithmetically equivalent} if $f_K(p)=f_L(p)$ for every prime $p\in\Z$ up to a set of exceptions of Dirichlet density zero. This is an equivalence relation among number fields which is trivially satisfied if $K$ and $L$ are isomorphic; however, the converse is not always true, as it was already noted by Gassmann \cite{gassmann1926bemerkungen} who gave the first explicit example of two equivalent number fields which are not isomorphic. A more systematic study of arithmetic equivalence was carried several years later by Perlis \cite{perlis1977equation}, who showed that this concept is in fact equivalent to a relation, called \textit{Gassmann equivalence}, satisfied by the Galois groups of the two fields contained in a common Galois closure; afterwards, Stuart and Perlis \cite{stuart1995new} were able to show that these relations are equivalent to a third one, called \textit{split equivalence}. Their results are summarized in the following.

\begin{thm}[Perlis, Stuart]\label{thmEquivalence}
Let $K$ and $L$ be number fields. The following are equivalent.
\begin{itemize}
    \item[1)] \underline{Arithmetic equivalence}. For every prime number $p\in\Z$, up to exceptions with Dirichlet density zero, one has $f_K(p)=f_L(p)$.
    \item[2)] \underline{Gassmann equivalence}. If $N$ is a Galois extension of $\Q$ containing both $K$ and $L$, then for every conjugacy class $\mathcal{C}\subset \Gal(N/\Q)$ one has $|\mathcal{C}\cap \Gal(N/K)| = |\mathcal{C}\cap \Gal(N/K)|.$
    \item[3)] \underline{Split equivalence}. For every prime number $p\in\Z$, up to exceptions with Dirichlet density zero, the number of prime ideals of $\Ok$ above $p$ equals the number of prime ideals of $\Ol$ above $p$.
\end{itemize}
\end{thm}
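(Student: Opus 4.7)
The strategy is to translate each of the three conditions into an equivalent statement about the action of $G := \Gal(N/\Q)$ on the coset spaces $G/H$ and $G/H'$, where $H := \Gal(N/K)$ and $H' := \Gal(N/L)$. The bridge is the Chebotarev density theorem, together with the classical fact that, for a prime $p$ unramified in $N$ with Frobenius class $[\sigma_p]\subset G$, the splitting type $f_K(p)$ equals the multiset of cycle lengths of $\sigma_p$ acting on $G/H$ (and analogously for $L$). Since Chebotarev realises every conjugacy class $C\subset G$ as a Frobenius class on a positive-density set of primes, the density-zero exceptions are harmless: modulo them, (1) is equivalent to every $\sigma\in G$ inducing permutations of $G/H$ and $G/H'$ with the same cycle type, while (3) is equivalent to every $\sigma\in G$ having the same number of orbits on both spaces.

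For $(1)\Leftrightarrow(2)$ I would use standard character theory of permutation representations. A short direct computation (decomposing $\{g\in G : g^{-1}\sigma g\in H\}$ according to the image $g^{-1}\sigma g$) gives
\[
\chi_{G/H}(\sigma) \,=\, \#\{gH \in G/H : g^{-1}\sigma g\in H\} \,=\, \frac{|C_G(\sigma)|}{|H|}\,|C\cap H| \,=\, \frac{|G|}{|C|\,|H|}\,|C\cap H|,
\]
where $C$ is the conjugacy class of $\sigma$. Specialising (1) at $\sigma=e$ forces $[K:\Q]=[L:\Q]$, so $|H|=|H'|$, and then $\chi_{G/H}=\chi_{G/H'}$ is equivalent to $|C\cap H|=|C\cap H'|$ for every conjugacy class $C$, which is exactly (2). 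Moreover, the collection of fixed-point counts $\chi_{G/H}(\sigma^k)$ as $k$ varies determines the full cycle type of $\sigma$ on $G/H$ by elementary M\"obius inversion (a cycle of length $\ell$ contributes $\ell$ fixed points to $\sigma^\ell$), so equality of cycle types for every $\sigma$ on the two coset spaces coincides with equality of permutation characters, closing this equivalence.

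For $(3)\Leftrightarrow(1)$ the implication $(1)\Rightarrow(3)$ is immediate, since the number of primes of $K$ above $p$ is the length of the tuple $f_K(p)$. For the reverse direction, set $\Delta(\sigma):=\chi_{G/H}(\sigma)-\chi_{G/H'}(\sigma)$. Burnside's orbit-counting lemma identifies the number of orbits of $\sigma$ on $G/H$ with $\tfrac{1}{|\sigma|}\sum_{\tau\in\langle\sigma\rangle}\chi_{G/H}(\tau)$, so (3), promoted via Chebotarev to every $\sigma\in G$, becomes
\[
\sum_{\tau\in\langle\sigma\rangle}\Delta(\tau) \,=\, 0 \qquad \text{for every } \sigma\in G.
\]
The crucial observation is that any two generators of the same cyclic subgroup of $G$ share the same orbit decomposition on any $G$-set, so $\Delta$ takes a common value $\Delta_d$ on the $\phi(d)$ generators of the unique cyclic subgroup of order $d$ inside $\langle\sigma\rangle$. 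Applying the displayed identity to a generator of each cyclic subgroup of order $m\mid n:=|\sigma|$ produces the system $\sum_{d\mid m}\phi(d)\Delta_d=0$ indexed by $m\mid n$, and M\"obius inversion on the divisor lattice of $n$ forces $\phi(d)\Delta_d=0$, hence $\Delta_d=0$, for every $d\mid n$. In particular $\Delta(\sigma)=\Delta_n=0$, which gives (2) and closes the triangle. I anticipate the main subtlety to lie precisely in this last step: the interplay between the orbit-generator invariance of $\Delta$ and M\"obius inversion on cyclic subgroups is what upgrades the weaker split equivalence to the full permutation-character identity, and it is the crux of the Stuart--Perlis refinement over the classical Perlis equivalence.
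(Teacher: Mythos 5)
Your proposal is correct, and it is more self-contained than what the paper does: the paper does not prove Theorem \ref{thmEquivalence} at all, but cites Perlis and Stuart--Perlis, and the closest it comes is the proof of the function-field analogue in Section 2, where $3)\Rightarrow 2)$ is again deferred to the literature and only $2)\Rightarrow 1)$ is argued, via the double coset decomposition $G=\coprod_i H_K\tau_i C$ for cyclic $C=G_{\mathfrak{B}}$ and the identification of the coset sizes $|H_K\tau_iC|/|H_K|$ with the inertia degrees. Your treatment of $1)\Leftrightarrow 2)$ is the same combinatorics in different clothing: the double cosets $H\tau_iC$ are exactly the orbits of $C=\langle\sigma\rangle$ on the coset space, so ``same coset type for every cyclic subgroup'' is your ``same cycle type for every $\sigma$,'' and your fixed-point formula $\chi_{G/H}(\sigma)=|C_G(\sigma)|\,|\mathcal{C}\cap H|/|H|$ together with M\"obius inversion on $\chi_{G/H}(\sigma^k)$ is a clean character-theoretic substitute for the double-coset bookkeeping. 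Where your write-up genuinely adds value is the direction $3)\Rightarrow 2)$, which is the actual content of the Stuart--Perlis refinement and which neither the statement's source in this paper nor the Section 2 proof spells out: your combination of Burnside's lemma (turning the number of primes above $p$ into $\frac{1}{|\sigma|}\sum_{\tau\in\langle\sigma\rangle}\chi(\tau)$), the observation that $\Delta$ is constant on generators of a fixed cyclic subgroup, and M\"obius inversion over the divisor lattice to extract $\phi(d)\Delta_d=0$ is a complete and correct argument; note that the $m=1$ instance of your system is what forces $[K:\Q]=[L:\Q]$, so no separate degree argument is needed. The only point worth making explicit is the Chebotarev upgrade you use twice, namely that a density-zero exceptional set cannot contain all primes whose Frobenius lies in a given conjugacy class, since each such class has positive density; you state this and it is exactly right.
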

\noindent
This result admits also an analytic counterpart involving the Dedekind zeta function $\zk(s)$ of a number field $K$ \cite{perlis1977equation}.

\begin{thm}[Perlis]\label{thmDedekindZeta}
Let $K$ and $L$ be number fields. Then $K$ and $L$ are arithmetically equivalent if and only if $\zeta_K(s)=\zeta_L(s)$.
\end{thm}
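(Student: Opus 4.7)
The guiding idea is to express both conditions through the Euler product
\[
\zeta_K(s)=\prod_p \zeta_{K,p}(s),\qquad \zeta_{K,p}(s)=\prod_{i=1}^{r_p}(1-p^{-f_i(p)s})^{-1},
\]
where $f_K(p)=(f_1(p),\ldots,f_{r_p}(p))$ is the splitting type of $p$ in $K$, and then to use Theorem \ref{thmEquivalence} to convert the density-$1$ splitting condition of arithmetic equivalence into an algebraic statement about permutation representations of a common Galois group.

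\textbf{Easy direction ($\zeta_K=\zeta_L\Rightarrow$ arithmetic equivalence).} If $\zeta_K(s)=\zeta_L(s)$, the uniqueness of Dirichlet series expansions in the domain of absolute convergence, combined with the multiplicativity of the ideal-counting coefficients, forces equality of Euler factors $\zeta_{K,p}(s)=\zeta_{L,p}(s)$ for every prime $p$. The rational function $\prod_i(1-p^{-f_is})^{-1}$ in the variable $p^{-s}$ uniquely determines the multiset $\{f_i\}$ (e.g.\ by reading off the locations and orders of its poles), so $f_K(p)=f_L(p)$ for every prime $p$, which is strictly stronger than arithmetic equivalence.

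\textbf{Hard direction.} Suppose $K$ and $L$ are arithmetically equivalent. By Theorem \ref{thmEquivalence} they are Gassmann equivalent: writing $G=\Gal(N/\Q)$ for a common Galois closure and $H=\Gal(N/K)$, $H'=\Gal(N/L)$, the equalities $|\mathcal{C}\cap H|=|\mathcal{C}\cap H'|$ over all conjugacy classes $\mathcal{C}$ of $G$ are equivalent to the isomorphism of permutation representations $\C[G/H]\cong\C[G/H']$ as $G$-modules. At this point I invoke the expression of the Dedekind zeta function as an Artin $L$-function attached to an induced representation,
\[
\zeta_K(s)=L(s,\operatorname{Ind}_H^G\mathbf{1},N/\Q)=\prod_p\det\bigl(1-p^{-s}\operatorname{Frob}_p\mid \C[G/H]^{I_p}\bigr)^{-1},
\]
where $I_p\subseteq D_p\subseteq G$ are the inertia and decomposition groups at a prime of $N$ above $p$ and $\operatorname{Frob}_p$ denotes the class of Frobenius in $D_p/I_p$. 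Since $\C[G/H]\cong\C[G/H']$ as $G$-modules, they are isomorphic a fortiori as $D_p$-modules for every $p$; hence the local factors coincide at every prime (ramified or not) and we conclude $\zeta_K=\zeta_L$.

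\textbf{Main obstacle.} The delicate step is precisely the ramified primes in the forward direction: arithmetic equivalence is merely a density-$1$ statement about unramified splitting, whereas $\zeta_K=\zeta_L$ demands equality of every single local factor. The bridge is the passage through Gassmann equivalence together with the Artin $L$-function formalism, which upgrades an unramified density-$1$ agreement to a bona fide isomorphism of $G$-sets $G/H\cong G/H'$; such an isomorphism restricts to an isomorphism of $D_p$-sets for \emph{every} $p$, and it is this structural rigidity — not the splitting data itself — that forces agreement of the ramified Euler factors.
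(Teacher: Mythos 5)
The paper states this theorem without proof, citing Perlis, so there is no in-house argument to compare against; I will therefore assess your proposal on its own terms. It follows the standard Perlis route and is essentially correct. The easy direction is fine: equality of Dirichlet coefficients plus multiplicativity gives equality of local factors, and the factor $\prod_i(1-X^{f_i})^{-1}$ determines the multiset $\{f_i\}$ (e.g.\ the multiplicity of a primitive $d$-th root of unity as a zero of the denominator counts the $f_i$ divisible by $d$, and M\"obius inversion recovers the multiset). The hard direction is also the right argument: Gassmann equivalence gives $\C[G/H]\cong\C[G/H']$ as $G$-modules, Artin's induction formalism identifies $\zeta_K$ with $L(s,\operatorname{Ind}_H^G\mathbf{1},N/\Q)$ \emph{including} the ramified Euler factors, and an isomorphism of $G$-modules restricts to one of $D_p$-modules, hence of $I_p$-invariants with compatible Frobenius action, so all local factors agree. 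This is precisely the mechanism the paper alludes to when it remarks that ``the local factors of the two zeta functions coincide'' and hence that the exceptional set is empty; the paper's later remark also mentions Perlis's complementary Lemma~2 (an analytic argument showing that a finite quotient of Euler factors equal to $1$ must be trivially $1$), which is an alternative way to dispatch the ramified primes but is not needed if one argues via induced representations as you do.

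One sentence in your final paragraph is genuinely wrong and should be corrected: Gassmann equivalence does \emph{not} upgrade to ``an isomorphism of $G$-sets $G/H\cong G/H'$.'' An isomorphism of $G$-sets between transitive $G$-sets forces $H$ and $H'$ to be conjugate, hence $K\cong L$ --- exactly the situation whose failure is the point of the whole theory (and of this paper). What you get, and what your argument actually uses two sentences earlier, is an isomorphism of the \emph{linearized} permutation representations $\C[G/H]\cong\C[G/H']$, which restricts to an isomorphism of $D_p$-representations for every $p$; that linear statement suffices for the equality of the determinantal local factors, and no $G$-set or $D_p$-set isomorphism is available or needed.
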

The proof of this analytic equivalence shows that the local factors of the two zeta functions coincide, and from this it follows that the set of exceptions in the definition of arithmetic equivalence of number fields is actually the empty set. Moreover, the study of the residue at $s=1$ of the Dedekind zeta functions and the group-theoretic consequences of Gassmann equivalence yield some constraints on their degrees and equality for some invariants. \cite{perlis1977equation}.
\begin{itemize}
    \item $K$ and $L$ have the same degree, the same number $r_1$ of real embeddings  and the same number $r_2$ of complex embeddings up to complex conjugation.
    \item $K$ and $L$ have the same discriminant.
    \item $K$ and $L$ share the same Galois closure, the same Galois core (i.e. the biggest Galois extension of $\Q$ contained in the field) and the same roots of unity.
    \item If $h_K$ and $R_K$ are the class number and the regulator of the field $K$, then $h_KR_K = h_LR_L$. Notice that in general one may have $h_K\neq h_L$, see \cite{desmitperlis1994zeta}.
    \item If $K/\Q$ is Galois, then $K$ and $L$ are isomorphic.
    \item If $[K:\Q]\leq 6$, then $K$ and $L$ are isomorphic.
\end{itemize}
Since the definition of arithmetic equivalence relies upon the factorization of ideals in a Dedekind domain, it is natural to extend this concept and studying it over global fields, like number fields different from $\Q$ or function fields over finite fields.

In this paper we focus on the study of arithmetic equivalence for global function fields, i.e. finite extensions $K$ of $\Fq(T)$ where $\F_q$ is a finite field and $T$ is a fixed transcendental element over $\Fq$. We say that $K$ and $L$ are \textit{arithmetically equivalent over $\Fq(T)$} if, up to exceptions of zero Dirichlet density, one has $f_K(\mathfrak{p})=f_L(\mathfrak{p})$ for every non-zero prime $\mathfrak{p}$ of $\Fq[T]$, i.e. for every monic irreducible polynomial in $\Fq[T]$. From now on, we will assume that every considered extension is separable and that an algebraic closure of $\Fq(T)$ is fixed.

Cornelissen, Kontogeorgis and Van der Zalm \cite{cornelissen2010arithmetic} studied and characterized arithmetic equivalence over $\Fq(T)$ for \textit{geometric extensions}, i.e. finite and separable extensions $K/\Fq(T)$ such that $\Fq$ is the field of constants of $K$. Their results are summarized in the following.

\begin{thm}[Cornelissen, Kontogeorgis, Van der Zalm]\label{thmCornelissen} 
Let $K$ and $L$ be two geometric extensions of $\Fq(T)$.
\begin{itemize}
    \item[i)] The equivalence among the three relations described in Theorem \ref{thmEquivalence} still holds in this case, where ``number field" is replaced with ``geometric extension", $\Q$ is replaced with $\Fq(T)$ and ``$p\in\Z$" is replaced with ``a non-zero prime ideal $\mathfrak{p}$ of $\Fq[T]$".
    \item[ii)] $K$ and $L$ are arithmetically equivalent with empty set of exceptions if and only if $\zk^{[0]}(s) =\zeta_L^{[0]}(s)$, where $\zk^{[0]}(s)$ is the lifted Goss zeta function defined in \cite[Section 5]{cornelissen2010arithmetic}.
    \item[iii)] If $K$ and $L$ are arithmetically equivalent, then they share the same Weil zeta function. the same degree over $\Fq(T)$, the same genus, the same class number, the same absolute discriminant and the same field of constants. They also have the same Galois closure and the same Galois core over $\Fq(T)$.
    \item[iv)] If $K$ and $L$ are arithmetically equivalent and $K$ is Galois over $\Fq(T)$, then the two extensions are isomorphic.
    \item[v)] If $[K:\Fq(T)]\leq 6$, then $K$ and $L$ are isomorphic over $\Fq(T)$.
\end{itemize}
\end{thm}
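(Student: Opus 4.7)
The statement is essentially the function-field analogue of the classical number-field Theorems \ref{thmEquivalence} and \ref{thmDedekindZeta}, restricted to geometric extensions; I would mimic the number-field arguments, using that a geometric extension $K/\F_q(T)$ corresponds to a smooth projective curve $X_K$ over $\F_q$ in which the primes of $\F_q[T]$ are the closed points not lying over the point at infinity of $\Pp^1_{\F_q}$.

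For part (i), the plan is to translate everything into group theory via Chebotarev's density theorem, which is available for global function fields. Fix a Galois extension $N/\F_q(T)$ containing $K$ and $L$ with groups $G=\Gal(N/\F_q(T))$, $H=\Gal(N/K)$, $H'=\Gal(N/L)$. For every prime $\pp$ of $\F_q[T]$ unramified in $N$, the Frobenius conjugacy class $\mathcal{C}_\pp\subset G$ determines $f_K(\pp)$ as the cycle type of any $\sigma\in\mathcal{C}_\pp$ acting on $G/H$; Chebotarev ensures that each conjugacy class is hit with positive density. The implication (1)$\Rightarrow$(2) then follows by comparing, for a fixed $\mathcal{C}$, the number of fixed points and more generally the cycle structure of $\sigma\in\mathcal{C}$ on $G/H$ versus $G/H'$, using Burnside-type counting to get $|\mathcal{C}\cap H|=|\mathcal{C}\cap H'|$; the implications (2)$\Rightarrow$(1) and (2)$\Leftrightarrow$(3) are purely group-theoretic (Perlis–Stuart), and go over verbatim. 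The only thing to check is that ramified primes and the prime at infinity form a density-zero set, which is immediate as both sets are finite.

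For part (ii), I would examine the local Euler factors of the lifted Goss zeta function $\zk^{[0]}(s)$ at each prime of $\F_q[T]$: these depend only on the multiset of inertia degrees of primes above $\pp$, so $\zk^{[0]}(s)=\zeta_L^{[0]}(s)$ is equivalent to $f_K(\pp)=f_L(\pp)$ for \emph{every} $\pp$. Combined with (i) this forces the exceptional set to be empty: Gassmann equivalence controls splitting at every unramified prime, and equality of local factors at the finitely many ramified primes is a finite extra condition. For part (iii), the Weil zeta function is encoded in the same local data (together with the factor at infinity, which is determined by $[K:\F_q(T)]$ and the constant field, themselves invariants of the Gassmann triple), so it coincides for $K$ and $L$; genus, class number and discriminant then drop out from the functional equation, the residue at $s=1$, and the conductor-discriminant formula, respectively. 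Equality of Galois closure and Galois core is the standard observation that both coincide with $N/\bigcap_{g\in G}gHg^{-1}$ and with the fixed field of $\langle H\rangle_G$, which are determined by the Gassmann data.

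Parts (iv) and (v) are then formal consequences of (i). If $K/\F_q(T)$ is Galois, then $H$ is normal in $G$, so $|\mathcal{C}\cap H|=|\mathcal{C}\cap H'|$ for every class $\mathcal{C}$ forces $H'\subseteq H$, and since $|H|=|H'|$ one gets $H=H'$, hence $K=L$; this step is what I expect to be the easiest. The main obstacle, as usual, is part (v): one needs to know that for every finite group $G$ and every pair of subgroups $H,H'\leq G$ of index $\le 6$ satisfying the Gassmann condition, $H$ and $H'$ are conjugate in $G$. This is a delicate finite group-theoretic verification (the first non-conjugate Gassmann pair occurs in index $7$), and I would simply quote the Perlis classification, pointing out that the argument is purely about the abstract triple $(G,H,H')$ and therefore insensitive to whether the base field is $\Q$ or $\F_q(T)$.
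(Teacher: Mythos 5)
Your outline for parts (i), (iv) and (v) matches the intended arguments: Chebotarev for global function fields plus the Perlis--Stuart group theory for (i), the observation that for normal $H$ the condition $|\mathcal{C}_{h'}\cap H|=|\mathcal{C}_{h'}\cap H'|\geq 1$ forces $H'\subseteq H$ for (iv), and Perlis's classification of transitive groups of degree at most $6$ for (v); these are indeed insensitive to the base field. (Note that this theorem is quoted from Cornelissen--Kontogeorgis--Van der Zalm without proof here; the arguments appear in this paper only in their generalized, non-geometric form in Sections 2 and 3.)

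There are, however, two genuine gaps. First, in (ii) you assert that ``combined with (i) this forces the exceptional set to be empty.'' It does not: Gassmann equivalence only controls the splitting of unramified primes, and in the function-field setting there is no known argument excluding the finitely many ramified primes as genuine exceptions --- this is precisely why (ii) must carry the hypothesis ``with empty set of exceptions.'' The paper devotes a Remark to this point: for number fields one divides the two Dedekind zeta functions and uses analyticity together with the visibility of the residue characteristic in the norms to show the quotient of finitely many discrepant Euler factors equals $1$; no such argument is available for the lifted Goss zeta function, whose variable lives in $S_{\infty}=\C_F^*\times\Z_p$. Relatedly, your claim that equality of $\zk^{[0]}$ and $\zeta_L^{[0]}$ is ``equivalent to'' equality of local data needs the non-trivial half: one must prove that the lifted zeta function determines its coefficients $B_K(f)$, which requires the uniqueness lemma for non-archimedean Dirichlet series applied componentwise to Witt vectors (an induction on components, since Witt-vector addition mixes them), and then Klingen's recursion relating the $B_K(\mathfrak{p}^m)$ to the prime-counting numbers $C_K(\mathfrak{p}^m)$. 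Second, in (iii) the Weil zeta function of $K$ contains Euler factors at the places above infinity and at ramified finite places, none of which are read off from the splitting types of unramified finite primes; in particular the factor at infinity is not determined by the degree and the constant field alone. The clean route is representation-theoretic: arithmetic equivalence is equivalent to $1_{H_K}^{G}\simeq 1_{H_L}^{G}$, and the Kani--Rosen/Serre formalism then yields equality of the full Weil zeta functions, after which genus, class number and the degree of the different follow as you describe.
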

The first part of this paper (Section 2 and 3) is devoted to removing the need for the geometric hypothesis and generalize the results of Theorem \ref{thmCornelissen} for finite separable extensions of $\Fq(T)$. In particular, if $K$ is not geometric over $\Fq(T)$ and contains $\F_{q^r}(T)$ with $r\geq 2$, we give a proper definition of lifted Goss zeta function, depending on the chosen rational base field and generalizing the previous one. We will do this by recalling the previous proofs of the equivalence among the several relations for the geometric case, and in doing this we will underline why the equality of Zeta functions needs an empty set of exceptions as hypothesis to be satisfied in the case of global function fields, differently from what we know for number fields.

The second part of this work (Section 4) justifies the need for the previous generalization by presenting an actual example of two non-geometric extensions $K$ and $L$ of $\Fq(T)$ which are arithmetically equivalent and non isomorphic over $\Fq(T)$ but such that they are not equivalent over $\F_{q^2}(T)$. In fact, known examples of non-geometric extensions equivalent over $\Fq(T)$ have been already obtained in \cite{cornelissen2010arithmetic} and by Solomatin \cite{solomatin2016artin} but considering equivalent geometric extensions over $\F_{q^r}(T)$, which therefore are equivalent over $\Fq(T)$. Instead, for every prime number $p$ we present a couple $(K(p), K'(p))$ of equivalent and non-isomorphic extensions of $\F_2(T)$ which contain $\F_4(T)$ but are not equivalent over this bigger field; furthermore, we will show that $K(p)$ is not equivalent to $K(q)$ if $p\neq q$ and $p\geq 5$, so that the examples are distinct from each other. Finally, we will also show why the equality of splitting types have no exceptions for these fields, so that the corresponding lifted Goss zeta functions are equal.

\subsection*{Acnkowledgments} The authors thank Torsten Schoeneberg for useful clarifications about the weak topology on Witt vectors. The first author was supported by the French “Investissements d'Avenir" program, project ISITE-BFC (contract ANR-15-IDEX-0003).

\section{Extenstion of known results to the non-geometric case}
We begin by showing that the equivalence between arithmetic equivalence, Gassmann equivalence and split equivalence still holds if we consider separable extensions of $K/\Fq(T)$ without the geometric hypothesis. From now on, we will denote with $\Ok$ the integral closure of $\Fq[T]$ in $K$.

\begin{thm}
Let $K$ and $L$ be finite separable extensions of $\Fq(T)$. The following are equivalent.
\begin{itemize}
    \item[1)]\underline{Arithmetic equivalence}. For every non-zero prime $\mathfrak{p}$ of $\Fq[T]$ up to exceptions with Dirichlet density zero, one has $f_K(\mathfrak{p})=f_L(\mathfrak{p})$.
    
    \item[2)]\underline{Gassmann equivalence}. If $N$ is a Galois extension of $\Fq(T)$ containing both $K$ and $L$, then for every conjugacy class $\mathcal{C}\subset \Gal(N/\Fq(T))$ one has $|\mathcal{C}\cap \Gal(N/K)| = |\mathcal{C}\cap \Gal(N/K)|.$
    
    \item[3)]\underline{Split equivalence}. For every finite prime $\mathfrak{p}$ of $\Fq[T]$ up to exceptions with Dirichlet density zero, the number of prime ideals of $\Ok$ above $\mathfrak{p}$ equals the number of prime ideals of $\Ol$ above $\mathfrak{p}$.
\end{itemize}
\end{thm}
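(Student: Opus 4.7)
The plan is to follow the classical Perlis--Stuart proof and to check that each step transfers to the function field setting without requiring geometricity. Fix a finite Galois extension $N/\F_q(T)$ containing both $K$ and $L$ (for instance the compositum of their Galois closures), and set $G = \Gal(N/\F_q(T))$, $H_K = \Gal(N/K)$, $H_L = \Gal(N/L)$. The non-geometric hypothesis is immaterial here since the Galois theory and the Chebotarev density theorem are valid for arbitrary finite Galois extensions of global function fields; the set of primes of $\F_q[T]$ ramified in $N$ is finite, hence of Dirichlet density zero, and can safely be discarded throughout.

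I would start with the direction $2)\Rightarrow 1)$ and $2)\Rightarrow 3)$. For any prime $\mathfrak{p}$ of $\F_q[T]$ unramified in $N$, choose a Frobenius $\sigma_\mathfrak{p}\in G$. Standard decomposition theory tells us that $f_K(\mathfrak{p})$ is the multiset of orbit sizes of $\langle\sigma_\mathfrak{p}\rangle$ acting by left translation on $G/H_K$, and that the number of prime ideals of $\Ok$ above $\mathfrak{p}$ is the number of such orbits. Gassmann equivalence is precisely the equality of permutation characters $\chi_{H_K}=\operatorname{Ind}_{H_K}^G 1=\operatorname{Ind}_{H_L}^G 1=\chi_{H_L}$, and by Burnside this forces both cycle structures and orbit counts to agree on the two coset spaces for every $\sigma\in G$. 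Hence both 1) and 3) hold with only the finite ramified locus as exceptions.

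For $1)\Rightarrow 2)$, apply Chebotarev in the reverse direction: every conjugacy class $\mathcal{C}\subset G$ is the Frobenius class of a set of unramified primes of positive density, hence meets the set of primes for which splitting types agree. Therefore, for every $\sigma\in G$, the cycle types of $\sigma$ on $G/H_K$ and $G/H_L$ coincide. Taking fixed-point counts and using
\[
|\operatorname{Fix}_{G/H}(\sigma)|=\frac{|C_G(\sigma)|\cdot|\sigma^G\cap H|}{|H|},
\]
together with $|H_K|=|H_L|$ (from $\sigma=e$), gives $|\mathcal{C}\cap H_K|=|\mathcal{C}\cap H_L|$ for every conjugacy class, which is Gassmann equivalence.

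The main obstacle is $3)\Rightarrow 2)$, the Perlis--Stuart direction. By the Chebotarev argument above, split equivalence implies that for every $\sigma\in G$ the number of orbits of $\langle\sigma\rangle$ on $G/H_K$ equals that on $G/H_L$; that is,
\[
\frac{1}{|\sigma|}\sum_{k}\chi_{H_K}(\sigma^k)=\frac{1}{|\sigma|}\sum_{k}\chi_{H_L}(\sigma^k)
\]
for every $\sigma\in G$. I would then invoke the group-theoretic lemma of Stuart--Perlis, which recovers the pointwise equality $\chi_{H_K}=\chi_{H_L}$ from this family of orbit-count equalities via induction on the order of $\sigma$ combined with Möbius inversion on the divisors of $|\sigma|$. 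Since this argument is purely combinatorial and depends only on $G$, $H_K$, $H_L$, it transfers verbatim from the number field (or geometric function field) setting to ours; the non-geometricity of $K$ and $L$ plays no role.
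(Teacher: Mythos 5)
Your proposal is correct and follows essentially the same route as the paper: both reduce everything to the Chebotarev density theorem for global function fields (valid without the geometric hypothesis, which is the only point needing care) together with the purely group-theoretic Stuart--Perlis lemma for the hard direction $3)\Rightarrow 2)$. Your treatment of $2)\Rightarrow 1)$ via permutation characters and cycle types is just a reformulation of the paper's double-coset/coset-type argument, and your extra direct proof of $1)\Rightarrow 2)$ is redundant but harmless since the paper obtains it from $1)\Rightarrow 3)\Rightarrow 2)$.
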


\begin{proof}
The implication 1)$\Rightarrow 3)$ is trivial. The proof of 3)$\Rightarrow$2) follows the same outline of the one of Theorem \ref{thmCornelissen}.i) as presented in \cite[Proposition 2.1]{cornelissen2010arithmetic}, and this in turn presents an argument which is completely similar to the one of Theorem \ref{thmEquivalence} given by Stuart and Perlis \cite{stuart1995new}. The crucial fact is that both cases depend on the application of Chebotarev's Theorem, either the usual one for number fields or the one for global function fields \cite[Thm 6.3.1]{fried2006field}. We are interested in the latter, the validity of which does not depend on the assumption that the considered extensions are geometric over $\Fq(T)$. Hence, a straightforward application of the steps described in  \cite[Proposition 2.1]{cornelissen2010arithmetic} gives the result. 

In order to prove 2)$\Rightarrow$1), consider a finite prime $\mathfrak{p}$ of $\Fq[T]$ which is unramified in the Galois closure $N$ of both $K$ and $L$: the decomposition group $G_{\mathfrak{B}}$ of a prime $\mathfrak{B}\subset\mathcal{O}_N$ lying over $\mathfrak{p}$ is a cyclic subgroup of $G$. Consider $G\coloneqq \Gal(N/\Fq(T)), H_K\coloneqq \Gal(N/K)$ and $H_L\coloneqq\Gal(N/L)$:  we can write $G$ as disjoint union of double cosets $H_K\tau_i G_{\mathfrak{B}}$ or of double cosets $H_L\psi_j G_{\mathfrak{B}}$ with $\tau_i,\psi_j\in G$. We have \cite{gassmann1926bemerkungen} that $H_K$ and $H_L$ are Gassmann equivalent in $G$ if and only if they have the same coset type for every cyclic subgroup $C$ of $G$, i.e. $G=\coprod_{i=1}^n H_K \tau_i C = \coprod_{i=1}^n H_L \psi_i C$ and $|H_K \tau_i C| = |H_L \psi_i C| = |H_K|\cdot f_i$. Moreover, for every considered prime $\mathfrak{p}$, the numbers $f_i$ appearing in the coset type of $H_K$ with respect to $G_{\mathfrak{B}}$ are exactly the inertia degrees of its splitting type in $K$. Hence $f_K(\mathfrak{p})=f_L(\mathfrak{p})$. Since there is only a finite number of primes which ramifies in $N$, the claim follows.
\end{proof}

\begin{coro}
If $K$ and $L$ are arithmetically equivalent over $\Fq(T)$, then the set of exceptions is finite and is contained in the set of ramified primes in the Galois closure of $K$.
\end{coro}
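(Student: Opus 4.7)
The statement reads essentially directly off the proof of the implication 2)$\Rightarrow$1) in the preceding theorem, with one group-theoretic refinement added. Starting from arithmetic equivalence, the theorem yields Gassmann equivalence of $H_K=\Gal(N/K)$ and $H_L=\Gal(N/L)$ inside $G=\Gal(N/\Fq(T))$, for any Galois extension $N/\Fq(T)$ containing both $K$ and $L$. The coset-type argument in the proof of 2)$\Rightarrow$1) already established $f_K(\mathfrak{p})=f_L(\mathfrak{p})$ whenever $\mathfrak{p}$ is unramified in $N$, so the density-zero exceptional set is a priori contained in the set of primes of $\Fq[T]$ ramified in $N/\Fq(T)$, and this latter set is finite since only finitely many primes can ramify in a finite separable extension of $\Fq(T)$ (the support of the relative different being finite).

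To sharpen the containment from ``ramified in $N$'' to ``ramified in the Galois closure of $K$'', I would verify that $K$ and $L$ share a common Galois closure over $\Fq(T)$, so that one may take $N=\widetilde{K}$ directly. This is a purely group-theoretic consequence of Gassmann equivalence: the permutation characters of $G$ acting on $G/H_K$ and on $G/H_L$ agree (this is essentially a reformulation of Gassmann equivalence, via the formula $\chi_H(g)=|\{x\in G/H:\, gxH=xH\}|$), so the two permutation representations are isomorphic and in particular have the same kernel. Since the kernel of $G\to \mathrm{Sym}(G/H_K)$ is the normal core $\bigcap_{g\in G}gH_Kg^{-1}$, whose fixed field corresponds by Galois theory to the Galois closure of $K$ in $N$, equality of kernels translates into equality of Galois closures. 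In particular $L\subset \widetilde{K}$.

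With this identification in place, taking $N=\widetilde{K}$ finishes the argument: the exceptional set is contained in the finitely many primes of $\Fq[T]$ ramified in $\widetilde{K}/\Fq(T)$. I expect no genuine obstacle — the finiteness of the ramified primes is classical, and the character-theoretic identification of the two Galois closures is the standard trick borrowed from the number-field theory of arithmetic equivalence, which transports to the function-field setting without change and, notably, without recourse to the geometric hypothesis.
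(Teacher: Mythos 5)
Your argument is correct and matches the paper's: the corollary is read off from the proof of the implication 2)$\Rightarrow$1) in the preceding theorem (exceptions lie among the finitely many primes ramified in a common Galois extension $N$), and the identification of the Galois closures of $K$ and $L$ — which you obtain via equality of permutation characters — is exactly what the paper proves in Proposition \ref{propGroupProperties} using the equivalent conjugacy-class count $|\mathcal{C}_h|=|\mathcal{C}_h\cap H_K|=|\mathcal{C}_h\cap H_L|$. No issues.
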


\begin{coro}
If $K$ and $L$ are arithmetically equivalent and if  their Galois closure $N/\Fq(T)$ is unramified, then the set of exceptions is empty.
\end{coro}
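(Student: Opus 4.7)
The plan is to invoke the preceding corollary directly. That corollary tells us that whenever $K$ and $L$ are arithmetically equivalent over $\Fq(T)$, the (necessarily finite) set of exceptional primes $\mathfrak{p}\subset\Fq[T]$ at which $f_K(\mathfrak{p})\neq f_L(\mathfrak{p})$ is contained in the set of finite primes that ramify in the Galois closure $N$. The present hypothesis is exactly that this latter set is empty, so the set of exceptions is contained in the empty set, hence empty.

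Alternatively one may argue from scratch by re-reading the proof of the implication $2)\Rightarrow 1)$ in the theorem: for any finite prime $\mathfrak{p}$ of $\Fq[T]$ unramified in $N$, the decomposition group $G_{\mathfrak{B}}$ of a prime $\mathfrak{B}\subset\mathcal{O}_N$ above $\mathfrak{p}$ is cyclic, and Gassmann equivalence of $H_K=\Gal(N/K)$ and $H_L=\Gal(N/L)$ in $G=\Gal(N/\Fq(T))$ forces the two double-coset types with respect to $G_{\mathfrak{B}}$ to coincide, yielding $f_K(\mathfrak{p})=f_L(\mathfrak{p})$. Under the hypothesis that every finite prime of $\Fq[T]$ is unramified in $N$, this argument applies without exception, which gives the conclusion.

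\textbf{Expected obstacle.} There is essentially no substantive obstacle; the statement is a direct specialization of the previous corollary. The only point worth a brief remark is that the phrase ``the Galois closure of $K$'' used in the previous corollary and the field $N$ appearing here refer to the same object, which is justified either by the fact that arithmetically equivalent fields share a Galois closure over $\Fq(T)$, or, more immediately, by the observation that the proof of $2)\Rightarrow 1)$ only requires $N$ to be a Galois extension of $\Fq(T)$ containing both $K$ and $L$.
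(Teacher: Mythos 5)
Your proof is correct and matches the paper's (implicit) argument exactly: the statement is an immediate specialization of the preceding corollary, which itself comes from the observation in the proof of 2)$\Rightarrow$1) that the only possible exceptional primes are those ramifying in $N$. Your remark that $N$ need only be a Galois extension containing both $K$ and $L$ resolves the only potential ambiguity, so nothing is missing.
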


Arithmetic equivalence is linked to the research of Galois groups admitting Gassmann equivalent subgroups; in particular, if $G$ is the Galois group of an extension $N/\Fq(T)$ and $G$ admits two Gassmann equivalent subgroups which are not conjugated in $G$, then Galois Theory implies that $N$ contains two extensions $K/\Fq(T)$ and $L/\Fq(T)$ which are arithmetically equivalent but not isomorphic over $\Fq(T)$.
\begin{prop}\label{propGroupProperties}
Let $K/\Fq(T)$ and $L/\Fq(T)$ be finite separable extensions which are arithmetically equivalent over $\Fq(T)$.
\begin{itemize}
    \item $K$ and $L$ have the same Galois closure over $\Fq(T)$, the same Galois core over $\Fq(T)$ and the same field of constants.
    \item If $K/\Fq(T)$ is Galois, then $K = L$.
    \item If $[K:\Fq(T)]\leq 6$, then $K$ and $L$ are isomorphic over $\Fq(T)$.
\end{itemize}
\end{prop}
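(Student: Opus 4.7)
The plan is to reduce everything to group-theoretic statements that follow from the Gassmann equivalence established in the previous theorem. Take a finite Galois extension $N/\Fq(T)$ containing both $K$ and $L$, and set $G=\Gal(N/\Fq(T))$, $H_K=\Gal(N/K)$, $H_L=\Gal(N/L)$; by arithmetic equivalence these subgroups satisfy $|\mathcal{C}\cap H_K|=|\mathcal{C}\cap H_L|$ for every conjugacy class $\mathcal{C}$ of $G$, and in particular $|H_K|=|H_L|$.

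For the Galois closure and the Galois core I would invoke two classical consequences of Gassmann equivalence. First, $\Q[G/H_K]\cong\Q[G/H_L]$ as $\Q G$-modules, and the kernels of these permutation representations are the normal cores $\bigcap_{g\in G}gH_Kg^{-1}$ and $\bigcap_{g\in G}gH_Lg^{-1}$, which therefore coincide; by Galois theory these cores correspond to the Galois closures of $K$ and $L$ inside $N$, so those closures agree. Second, Gassmann equivalence forces $g\in G$ to be $G$-conjugate to an element of $H_K$ exactly when it is $G$-conjugate to an element of $H_L$, so $\bigcup_{g\in G}gH_Kg^{-1}=\bigcup_{g\in G}gH_Lg^{-1}$; the normal closures of $H_K$ and $H_L$ are generated by these sets and therefore coincide, which translates into equality of the Galois cores of $K$ and $L$.

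Having identified the common Galois closure, I would take $N$ to be it and let $\F_{q^s}$ be its field of constants. The constant subextension $\F_{q^s}(T)\subset N$ is Galois over $\Fq(T)$ and corresponds to a normal subgroup $M\triangleleft G$ with cyclic quotient $G/M\cong\Z/s\Z$. The constant field of $K$ equals $K\cap\F_{q^s}(T)=N^{H_K M}$, so it corresponds to the image of $H_K$ in the cyclic group $G/M$; since a cyclic group has a unique subgroup of each order, the constant fields of $K$ and $L$ coincide as soon as $|H_K|/|H_K\cap M|=|H_L|/|H_L\cap M|$. The remaining equality $|H_K\cap M|=|H_L\cap M|$ follows from Gassmann equivalence by summing $|\mathcal{C}\cap H_K|=|\mathcal{C}\cap H_L|$ over the conjugacy classes $\mathcal{C}$ contained in the normal (hence conjugation-stable) subgroup $M$. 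The Galois case is a short variant: if $H_K$ is normal it is itself a union of conjugacy classes, Gassmann forces each such class into $H_L$, and the order equality $|H_K|=|H_L|$ yields $H_K=H_L$, hence $K=L$.

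For the last item I would quote the purely group-theoretic theorem of Perlis stating that in any finite group two Gassmann equivalent subgroups of index at most $6$ are conjugate; its proof uses only finite-group arguments and carries over verbatim, giving $H_K$ and $H_L$ conjugate in $G$ and hence $K$ isomorphic to $L$ over $\Fq(T)$. The main difficulty compared with the geometric case is the constant-field item: the correct Galois-theoretic avatar of the constant field must be identified as the image of $H_K$ in the cyclic quotient $G/M$ associated with the maximal constant subextension of $N$, after which the input from Gassmann equivalence is the class-by-class count inside $M$; the remaining assertions become essentially formal once this group-theoretic dictionary is in place.
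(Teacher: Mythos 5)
Your proposal is correct, and its overall strategy --- translate everything into Gassmann equivalence of $H_K$ and $H_L$ inside $G=\Gal(N/\Fq(T))$ and then argue group-theoretically --- is the same as the paper's. Two sub-arguments differ in route. For the Galois closure you pass through the isomorphism of permutation modules $\Q[G/H_K]\cong\Q[G/H_L]$ and identify the normal cores as the kernels, whereas the paper argues directly: for $h$ in the core of $H_K$ the whole class $\mathcal{C}_h$ lies in $H_K$, so $|\mathcal{C}_h\cap H_L|=|\mathcal{C}_h|$ forces $\mathcal{C}_h\subset H_L$; these are essentially equivalent, yours being a touch more machinery for the same count. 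The genuine divergence is the field of constants. The paper's argument is shorter and piggybacks on the Galois-core bullet: the constant subextension $\F_{q^r}(T)/\Fq(T)$ is cyclic, hence Galois, hence contained in the Galois core of $K$, which has just been shown to equal that of $L$ and so lies in $L$; symmetry finishes. You instead introduce the normal subgroup $M=\Gal(N/\F_{q^s}(T))$, identify the constant field of $K$ with $N^{H_KM}$, use that $G/M$ is cyclic so images are determined by their order, and obtain $|H_K\cap M|=|H_L\cap M|$ by summing the class-by-class counts over the classes inside the normal subgroup $M$. This is correct and self-contained (it does not rely on the Galois-core item), at the cost of being more computational; it also isolates the useful general fact that Gassmann equivalent subgroups meet every normal subgroup in sets of equal cardinality. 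The Galois case and the degree-at-most-$6$ case are handled exactly as in the paper (direct class argument for normal $H_K$, and Perlis's group-theoretic theorem, respectively).
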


\begin{proof}
Let $N/\Fq(T)$ be a Galois extension containing both $K$ and $L$, and consider $G$, $H_K$ and $H_L$ be defined as before. By Galois Theory, the Galois closure of $K$ corresponds to the normal subgroup $\cap_{\sigma\in G} H_K^{\sigma}$ (where $H_K^{\sigma}\coloneqq \sigma^{-1} H_K \sigma$). Since arithmetic equivalence for $K$ and $L$ is the same as Gassmann equivalence for $H_K$ and $H_L$, the conjugacy class $\mathcal{C}_h$ of $h\in \cap_{\sigma\in G} H_K^{\sigma}$ satisfies
$$|\mathcal{C}_h| = |\mathcal{C}_h\cap H_K| = |\mathcal{C}_h\cap H_L|$$
which implies $h\in\cap_{\sigma\in G} H_L^{\sigma}$ and hence the equality of Galois closures by symmetry. A similar argument holds for the Galois core, which corresponds to the fixed field of the subgroup generated by the conjugates $H_K^{\sigma}$ of $H_K$.

Let $\F_{q^r}$ be the field of constants of $K$: then $K/\Fq(T)$ contains a cyclic subextension $\F_{q^r}(T)/\Fq(T)$ which is contained in the Galois core of $K/\Fq(T)$ by definition, hence is a subextension of $L/\Fq(T)$ by the previous result. Applying the symmetric procedure to $L/\Fq(T)$ we obtain that the two extensions have the same field of constants (notice that the constants are indeed the roots of unity of these extensions). If $K/\Fq(T)$ is Galois, then $H_K=G$ and must contain $H_L$ by the previous results. But Gassmann equivalent subgroups have the same caridinality, hence $H_K=H_L$ and the claim follows.

Finally, if $[K:\Fq(T)]=n\leq 6$, then the Galois closure of $K/\Fq(T)$ has Galois group equal to a transitive subgroup of $S_n$. Perlis \cite[Theorem 3]{perlis1977equation} proved that there are no transitive subgroups of $S_n$ with $n\leq 6$ which admit Gassmann equivalent subgroups which are not conjugated. 
\end{proof}

\begin{prop}\label{propWeil}
If $K/\Fq(T)$ and $L/\Fq(T)$ are arithmetically equivalent, then they have the same Weil zeta function.
\end{prop}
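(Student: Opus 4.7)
The plan is to identify the Weil zeta function of $K$ with the Artin $L$-function attached to a permutation representation of the common Galois closure of $K$ and $L$, and then to exploit the fact that this representation is determined up to isomorphism by arithmetic equivalence.

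First, by Proposition \ref{propGroupProperties} the fields $K$ and $L$ share a common Galois closure $N/\Fq(T)$ and a common field of constants $\F_{q^r}$, so it is meaningful to compare their Weil zeta functions directly. I would fix $G\coloneqq\Gal(N/\Fq(T))$, $H_K\coloneqq\Gal(N/K)$, $H_L\coloneqq\Gal(N/L)$, and recall that arithmetic equivalence is equivalent, by the first theorem of this section, to the Gassmann condition $|H_K\cap\mathcal{C}|=|H_L\cap\mathcal{C}|$ for every conjugacy class $\mathcal{C}\subseteq G$. Standard character theory upgrades this to an isomorphism of $\Q[G]$-modules
$$\operatorname{Ind}_{H_K}^G\mathbf{1}\;\cong\;\operatorname{Ind}_{H_L}^G\mathbf{1}.$$

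Next I would set up the Artin $L$-function over $\Fq(T)$: for a $G$-representation $V$, define
$$L(u,V)\;\coloneqq\;\prod_{\mathfrak{p}}\det\bigl(1-u^{\deg \mathfrak{p}}\,\phi_\mathfrak{p}\,\bigm|\, V^{I_\mathfrak{p}}\bigr)^{-1},$$
the Euler product running over all places $\mathfrak{p}$ of $\Fq(T)$, where $\phi_\mathfrak{p}$ is a Frobenius element and $I_\mathfrak{p}$ the inertia subgroup at a fixed prime of $N$ above $\mathfrak{p}$. The central identification, to be checked place by place, is that $L(u,\operatorname{Ind}_{H_K}^G\mathbf{1})$ coincides with the Weil zeta function $Z_K(u)$: at unramified $\mathfrak{p}$ this reduces to the bijection between cycles of $\phi_\mathfrak{p}$ on $G/H_K$ and primes of $K$ above $\mathfrak{p}$, with cycle length equal to the residue degree; at ramified $\mathfrak{p}$ the same relation holds after passing to $I_\mathfrak{p}$-orbits inside the permutation set $G/H_K$.

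The conclusion is then formal: each local factor $L_\mathfrak{p}(u,V)$ depends only on the isomorphism class of $V$ as a $G$-representation, so the isomorphism above forces $L_\mathfrak{p}(u,\operatorname{Ind}_{H_K}^G\mathbf{1}) = L_\mathfrak{p}(u,\operatorname{Ind}_{H_L}^G\mathbf{1})$ at every place, and multiplying over all places yields $Z_K(u)=Z_L(u)$. The main obstacle in this strategy is the careful bookkeeping at ramified places and at the place of $\Fq(T)$ at infinity: verifying that the local Weil factor of $K$ matches the Artin factor through the $I_\mathfrak{p}$-invariants of the permutation module is the only nontrivial step, after which the global equality is a purely representation-theoretic consequence of Gassmann equivalence.
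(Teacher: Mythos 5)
Your proposal is correct and follows essentially the same route as the paper: both identify $\zeta_K$ with the Artin $L$-function of $\operatorname{Ind}_{H_K}^G\mathbf{1}$ and conclude from the isomorphism $\operatorname{Ind}_{H_K}^G\mathbf{1}\cong\operatorname{Ind}_{H_L}^G\mathbf{1}$ furnished by Gassmann equivalence; the paper simply packages your place-by-place verification as a citation of Serre's formalism and the Kani--Rosen results. The one point your direct check must absorb (and which the paper isolates as the equality $L(\operatorname{Spec}\mathcal{O}_N,1_{H_K}^{G_r},s)=L(\operatorname{Spec}\mathcal{O}_N,1_{H_K}^{G},s)$) is that the Weil zeta function is normalized with norms taken over the constant field $\F_{q^r}$ rather than $\F_q$, which indeed comes out correctly in the cycle-length bookkeeping.
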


\begin{proof}
This is proved in \cite[Proposition 3.2]{cornelissen2010arithmetic} for arithmetically equivalent fields over a fixed global field, but we recall the proof for sake of completeness. Let $N/\Fq(T)$ be the Galois closure of $K/\Fq(T)$ with group $G$ and subgroup $H_K$ corresponding to $K$. If $\F_{q^r}$ is the field of constants of $K$, the Weil zeta function of $K$ is defined as $\zk(s)\coloneqq \sum_{I}\Nm(I)^{-s}$ where $s\in\C$, the sum is made over divisors of $K$ and the absolute norm is made over the field of constants $\F_{q^r}$ (and not $\Fq$).  Let $G_r\coloneqq\Gal(N/\F_{q^r}(T))$: this group still contains $H_K$ as a subgroup, and we have $\text{Spec }\mathcal{O}_N / H_K = \text{Spec }\Ok$. Notice that the result of this quotient does not depend on whether $H_K$ is seen as a subgroup of $G$ or $G_r$.

If $1_{H_K}^G$ is the representation of $G$ induced by the trivial representation of $H_K$, then we have that $K$ and $L$ are arithmetically equivalent if and only if $1_{H_K}^G \simeq 1_{H_L}^G$ \cite[Chapter 3, Lemma 2]{stuart1995new}. Using Serre's formalism for zeta functions \cite{serre1965zeta}, we have from the work by Kani and Rosen \cite{kani1994idempotent} that
$$
L(\text{Spec }\mathcal{O}_N, 1_{H_K}^{G_r},s ) =\zk(s) = L(\text{Spec }\mathcal{O}_N, 1_{H_K}^{G},s ).
$$
Now, since the representations $1_{H_K}^G$ and $1_{H_L}^G$ are isomorphic, we obtain\\
$ L(\text{Spec }\mathcal{O}_N, 1_{H_K}^{G},s ) =  L(\text{Spec }\mathcal{O}_N, 1_{H_L}^{G},s )$ and from this the desired equality of Weil zeta functions.
\end{proof}

\begin{rmk}
Notice that in the proof above we obtained $ L(\text{Spec }\mathcal{O}_N, 1_{H_K}^{G_r},s )= L(\text{Spec }\mathcal{O}_N, 1_{H_L}^{G_r},s )$ but this does not imply that the representations $1_{H_K}^{G_r}$ and $1_{H_L}^{G_r}$ are isomorphic, i.e. that the arithmetic equivalence of $K$ and $L$ over $\Fq(T)$ implies the arithmetic equivalence of the two fields over $\F_{q^r}(T)$. We shall see an explicit counterexample in later sections.
\end{rmk}
Now we list some common invariants that arithmetically equivalent fields over $\Fq(T)$ share. 

\begin{coro}
Let $K/\Fq(T)$ and $L/\Fq(T)$ be arithmetically equivalent extensions. Then they share the same genus, the same class number, the same degree of their different ideals over $\Fq(T)$, and their adele rings are isomorphic.
\end{coro}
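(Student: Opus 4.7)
The plan is to handle the four invariants separately, exploiting the equality of Weil zeta functions (Proposition \ref{propWeil}) and the common field of constants (Proposition \ref{propGroupProperties}).

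For the genus and class number, I would write the Weil zeta function of $K$ with constant field $\F_{q^r}$ in the standard form
$$\zeta_K(s) = \frac{P_K(q^{-rs})}{(1-q^{-rs})(1-q^{r-rs})},$$
with $P_K \in \Z[u]$ of degree $2g_K$, $P_K(0) = 1$, and $P_K(1) = h_K$. Since $K$ and $L$ share the constant field, the denominators match; equality of the full zeta functions then forces $P_K = P_L$, giving both $g_K = g_L$ and $h_K = h_L$.

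For the degree of the different, I would apply the Riemann--Hurwitz formula to the geometric extension $K/\F_{q^r}(T)$, noting that $\F_{q^r}(T)/\Fq(T)$ is everywhere unramified so $\mathfrak{D}_{K/\Fq(T)} = \mathfrak{D}_{K/\F_{q^r}(T)}$. The resulting identity
$$\deg \mathfrak{D}_{K/\Fq(T)} = 2g_K - 2 + 2[K:\F_{q^r}(T)]$$
yields equality of degrees of the differents, since $K$ and $L$ share genus (just shown), constant field (Proposition \ref{propGroupProperties}), and degree over $\Fq(T)$ (Gassmann equivalence preserves subgroup index).

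For the adele ring, I would decompose $\mathbb{A}_K \cong {\prod_\mathfrak{p}}' (K \otimes_{\Fq(T)} \Fq(T)_\mathfrak{p})$ as a restricted product over places of $\Fq(T)$ and aim to prove an isomorphism $K \otimes_{\Fq(T)} \Fq(T)_\mathfrak{p} \cong L \otimes_{\Fq(T)} \Fq(T)_\mathfrak{p}$ at every place $\mathfrak{p}$. Writing $K = N^{H_K}$ with $N$ the Galois closure and $G = \Gal(N/\Fq(T))$, each local algebra is governed by the action of the decomposition group $D_\mathfrak{p} \subset G$ on the coset space $G/H_K$. At the cofinitely many primes unramified in $N$, $D_\mathfrak{p}$ is cyclic (generated by Frobenius on the finite residue field extension), and Gassmann equivalence directly furnishes a $D_\mathfrak{p}$-equivariant bijection $G/H_K \cong G/H_L$, hence the local isomorphism. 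The main obstacle is the finite set of ramified primes with non-cyclic $D_\mathfrak{p}$: here I would adapt Komatsu's argument from the number field setting, using the isomorphism $1_{H_K}^G \cong 1_{H_L}^G$ of $\Q G$-modules to deduce the local algebra isomorphism. Gluing these local isomorphisms, and noting that the integral structure is preserved at the unramified places, yields $\mathbb{A}_K \cong \mathbb{A}_L$ as topological rings.
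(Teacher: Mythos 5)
Your treatment of the genus, the class number and the degree of the different is essentially the paper's argument: both extract $g$ and $h=P_K(1)$ from the standard form of the common Weil zeta function (using the common constant field to match denominators), and both get the degree of the different from the Hurwitz genus formula together with the equality of degrees coming from $|H_K|=|H_L|$. Your variant of passing through $K/\F_{q^r}(T)$ and using that the constant field extension is unramified is a harmless (indeed slightly more careful) reformulation.

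The adele ring part, however, departs from the paper and contains a genuine gap. The paper simply invokes Turner's theorem (\cite[Theorem 1]{turner1978adele}): for global function fields, equality of Weil zeta functions already implies the adele rings are isomorphic as topological rings. That isomorphism is an abstract rearrangement of local factors --- possible because every local field of positive characteristic is determined up to isomorphism by its residue field, being $\F_{q^d}((t))$ --- and it does \emph{not} respect the fibration over the places of $\Fq(T)$. Your strategy instead tries to produce an isomorphism $K\otimes_{\Fq(T)}\Fq(T)_{\mathfrak{p}}\cong L\otimes_{\Fq(T)}\Fq(T)_{\mathfrak{p}}$ at \emph{every} place, which is a strictly stronger statement, and the step you defer to the ramified places cannot be carried out: the isomorphism $1_{H_K}^G\cong 1_{H_L}^G$ does not determine the local algebras at ramified primes. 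This is exactly the phenomenon behind the number field counterexample recalled in the paper's subsequent remark ($X^8-97$ versus $X^8-16\cdot 97$): those fields are arithmetically equivalent, so the induced representations are isomorphic, yet their completions at $2$ differ and their adele rings are not isomorphic. Moreover, Komatsu's lemma, which you propose to adapt, runs in the opposite direction (from an adele ring isomorphism to arithmetic equivalence), so it offers no help here. A further, smaller, issue is that your restricted product must also include the places above $v_\infty$, which arithmetic equivalence as defined (over finite primes of $\Fq[T]$) does not directly control if $\infty$ ramifies in the Galois closure. To repair the argument you should abandon the place-by-place matching and instead deduce the isomorphism of adele rings from the equality of Weil zeta functions via Turner's result, as the paper does.
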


\begin{proof}
The fields have the same Weil zeta function, which has the form
$$\zk(s)=\frac{L_{2g}(q^{-rs})}{(1-q^{-rs})(1-q^{r(1-s)})}$$
where $q^r$ is the cardinality of the field of constants of $K$, $g$ is the genus of $K$ (notice that the value of the genus does not depend on whether $K$ is seen as an extension $K/\Fq(T)$ or $K/\F_{q^r}(T)$ since $\F_{q^r}(T)/\Fq(T)$ is an unramified extension) and $L_{2g}(x)$ is a complex polynomial of degree $2g$ in the variable $x$.

The arithmetically equivalent fields must have equal field of constants and equal Weil zeta function by the previous result, thus they share the polynomial $L_{2g}$ and they have the same genus. The class number of $K$ is equal to $L_{2g}(1)$ (see \cite[Theorem 5.1.15]{stichtenoth2009algebraic}) and so it is equal to the class number of $L$. The fields have also same degree of their different ideals thanks to the Hurwitz Genus Formula
$$2g-2 = -2[K:\Fq(T)] + \deg \text{Diff}(K/\Fq(T)).$$
Finally, the fields $K$ and $L$ have isomorphic adele rings since they have the same Weil zeta function: this is proved in \cite[Theorem 1]{turner1978adele}.
\end{proof}

\begin{rmk}
Two finite and separable extensions of $\Fq(T)$ have isomorphic adele rings if and only if they share the same Weil zeta function \cite[Theorem 2]{turner1978adele}; in particular, thanks to Proposition \ref{propWeil}, if the two fields are arithmetically equivalent then they have isomorphic adele rings. The converse however does not hold: a typical counterexample, as explained in \cite[Section 2]{cornelissen2010arithmetic}, is given by $\F_3(\sqrt{T})$ and $\F_3(\sqrt{T+1})$ as quadratic extensions of $\F_3(T)$, which have the same Weil zeta function, since they are isomorphic as absolute fields, but cannot be equivalent over $\F_3(T)$ because otherwise they would be isomorphic as $\F_3(T)$-algebras thanks to Proposition \ref{propGroupProperties}.
If we consider number fields, the relation between arithmetic equivalence and adele rings behaves differently: in this case, isomorphic adele rings give arithmetic equivalence (see \cite[Lemma 1]{komatsu1984adele}, which is based upon \cite[Lemma 7]{iwasawa1953rings}) but the converse does not hold, a counterexample given by the equivalent and non isomorphic number fields defined by the polynomials $X^8-97$ and $X^8-16\cdot 97$ \cite[pp.351-352]{perlis1977equation}.
\end{rmk}

\section{Lifted Goss Zeta function}
In this section we give the definition of lifted Goss Zeta function for a separable extension $K/\F_{q^r}(T)$: this recovers the definition presented in \cite[Section 5]{cornelissen2010arithmetic} but we stress that for this more general case it is very important to define explicitly the base field of the extension, since we are considering also non-geometric extensions.

Let $F\coloneqq \Fq(T)$ with characteristic $p$. The place at infinity yields a discrete valuation $v_{\infty}$ on $F$, with uniformizer $T^{-1}$, and an absolute value $||\cdot||_{\infty}$ therefore. Denote $F_{\infty}$ to be the completion of $F$ with respect to this absolute value and $\C_{F}$ to be the completion of a fixed algebraic closure of $F_{\infty}$ with respect to the unique extension of $||\cdot||_{\infty}$. We remark that $\C_{\Fp(T)}=\C_F$, and we denote its uniformizer with $\pi$; every element $\alpha\in\C_F$ can be written uniquely \cite[Section 7.2]{goss2012basic} as $\alpha = \pi^{v_{\infty}(\alpha)}\zeta_{\alpha}\langle\alpha\rangle$, where $\zeta_{\alpha}$ is a root of unity in $\C_F$ and $\langle\alpha\rangle = 1 + m_{\alpha}$ with $\Vert m_{\alpha}\Vert_{\infty}<1$ (in particular, $\langle\alpha\rangle$ is a 1-unit of $\C_F$ and $\Vert\langle\alpha\rangle\Vert_{\infty}=1$). If $\alpha=f\in\Fq[T]$ is a monic polynomial, then $\zeta_f =1$.

Let $K/\Fq(T)$ be a finite extension and $\mathfrak{P}\subset\Ok$ a finite prime, and let $f\in\Fq[T]$ be a monic irreducible polynomial which generates the prime ideal $\mathfrak{P}\cap\Fq[T]$, so that the \textit{inertia degree} of $\mathfrak{P}$ over $\Fq(T)$ is equal to $\left[\frac{\Ok}{\mathfrak{P}}: \frac{\Fq[T]}{(f)}\right]$. The \textit{norm} of $\mathfrak{P}$ over $\Fq(T)$ is defined as 
$$N_{q}^K(\mathfrak{P})\coloneqq f^{\left[\frac{\Ok}{\mathfrak{P}}: \frac{\Fq[T]}{(f)}\right]}$$
and it defines a function from the ideals of $\Ok$ to $\Fq[T]$ if we extend it by multiplicativity. If $K=\F_{q^r}(T)$, we denote the norm as $N^{q^r}_q$, and if $K$ is a finite separable extension of $\F_{q^r}(T)$ we have
\begin{equation*}
    N_q^K(\mathfrak{P}) = N_q^{q^r}(N_{q^r}^K(\mathfrak{P})).
\end{equation*}

Before giving the definition of lifted Goss zeta function, we need an intermediate step. Let $K/\Fq(T)$ be a finite separable extension of global function fields, and define $S_{\infty}\coloneqq \C_{\Fq(T)}^*\times \Z_p$, where $p$ is the characteristic of the fields and $\Z_p$ are the $p$-adic integers. The definition of $S_{\infty}$ only depends on the characteristic of $\Fq(T)$ and $S_{\infty}$ becomes a topological group with the product topology. Given $c>0$, we call a subset of $S_{\infty}$ of the form $\{s=(x,y)\in S_{\infty}\colon \Vert x\Vert_{\infty}>c\}$ a \textit{half-plane}.

If $s=(x,y)\in S_{\infty}$ and $f\in\Fq[T]$ is a monic polynomial, define the exponential
\begin{equation*}
    f^s \coloneqq x^{-v_{\infty}(f)}\langle f\rangle^y
\end{equation*}
where we recall that, if $a_0+a_1p+\cdots+a_np^n+\cdots$ is the $p$-adic expansion of $y$ with $0\leq a_i\leq p-1$ for every $i$, the term $\langle f\rangle^y$ is naturally given by the convergent product
$$
\langle f\rangle^y\coloneqq \prod_{i=0}^n\lp 1+m_f^{p^i}\rp^{a_i}.
$$
Notice that $-v_{\infty}(f)=\deg f$ by definition.
\begin{lem}
Let $f,g\in\Fq[T]$ be monic polynomials and let $s=(x,y), s'=(x',y')\in S_{\infty}$.
\begin{itemize}
    \item We have $f^{s+s'}=f^sf^{s'}$ and $(fg)^s = f^sg^s$.
    \item If $j\in\Z$ and $s_j\coloneqq (\pi^{-j},j)$, then $f^{s_j} = f^j$ (where the second exponential is thought in the usual way).
    \item We have $f^{-s}=(f^s)^{-1}$.
\end{itemize}
\end{lem}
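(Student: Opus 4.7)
The plan is to unwind the definition $f^{s}=x^{-v_\infty(f)}\langle f\rangle^{y}$ for $s=(x,y)$ and verify each identity by direct computation. Two inputs carry all the content: first, that the group law on $S_\infty=\C_F^{*}\times\Z_p$ (written additively in the statement) is given by $(x,y)+(x',y')=(xx',y+y')$, multiplicative on the $\C_F^{*}$-factor and additive on the $\Z_p$-factor; second, that the decomposition $\alpha=\pi^{v_\infty(\alpha)}\zeta_\alpha\langle\alpha\rangle$ is unique and that for monic $f\in\Fq[T]$ one has $\zeta_f=1$. The raising of a $1$-unit $1+m$ to a $p$-adic exponent is continuous and satisfies $(1+m)^{y+y'}=(1+m)^{y}(1+m)^{y'}$ and $(ab)^{y}=a^{y}b^{y}$ for $1$-units $a,b$; these facts come from the convergent-product definition recalled just above the lemma.

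For the first bullet, with $s=(x,y)$ and $s'=(x',y')$, I would simply compute
\[
f^{s+s'}=(xx')^{-v_\infty(f)}\langle f\rangle^{y+y'}=x^{-v_\infty(f)}\langle f\rangle^{y}\cdot(x')^{-v_\infty(f)}\langle f\rangle^{y'}=f^{s}f^{s'}.
\]
For $(fg)^{s}=f^{s}g^{s}$ I would use additivity of $v_\infty$ and the identity $\langle fg\rangle=\langle f\rangle\langle g\rangle$: since $f$, $g$, and $fg$ are all monic, their root-of-unity factors are trivial, and uniqueness of the decomposition $\alpha=\pi^{v_\infty(\alpha)}\zeta_\alpha\langle\alpha\rangle$ applied to $fg=\pi^{v_\infty(f)+v_\infty(g)}\langle f\rangle\langle g\rangle$ forces $\langle fg\rangle=\langle f\rangle\langle g\rangle$. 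Multiplicativity of $(1+m)\mapsto(1+m)^{y}$ then finishes.

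For the second bullet, at $s_j=(\pi^{-j},j)$ the definition gives
\[
f^{s_j}=(\pi^{-j})^{-v_\infty(f)}\langle f\rangle^{j}=\pi^{jv_\infty(f)}\langle f\rangle^{j},
\]
while $f=\pi^{v_\infty(f)}\zeta_f\langle f\rangle=\pi^{v_\infty(f)}\langle f\rangle$ (monic, so $\zeta_f=1$) yields $f^{j}=\pi^{jv_\infty(f)}\langle f\rangle^{j}$ by ordinary integer exponentiation, hence equality. The third bullet is then immediate: $-s=(x^{-1},-y)$, so $f^{-s}=x^{v_\infty(f)}\langle f\rangle^{-y}=(f^{s})^{-1}$; alternatively it follows formally from the first bullet by taking $s'=-s$ and observing that $f^{0}=1$ under any of these definitions.

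I expect no real obstacle here: the only substantive point is the convention that $S_\infty$ carries a multiplicative-additive hybrid group law despite the additive notation, and the only lemma not literally recalled in the preceding paragraph is $\langle fg\rangle=\langle f\rangle\langle g\rangle$ for monic $f,g$, which I would state and justify in one line via uniqueness of the canonical decomposition in $\C_F^{*}$.
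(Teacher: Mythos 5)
Your verification is correct. The paper itself gives no argument for this lemma: it simply cites Propositions 8.1.3 and 8.1.4 of Goss, \emph{Basic structures of function field arithmetic}, and your computation is essentially the content of those propositions, so you have made explicit what the paper leaves to the reference. The two genuinely load-bearing points are exactly the ones you isolate: the hybrid group law $(x,y)+(x',y')=(xx',y+y')$ on $S_{\infty}=\C_F^{*}\times\Z_p$ hiding behind the additive notation, and the identity $\langle fg\rangle=\langle f\rangle\langle g\rangle$ for monic $f,g$, which you correctly derive from uniqueness of the decomposition $\alpha=\pi^{v_{\infty}(\alpha)}\zeta_{\alpha}\langle\alpha\rangle$ together with $\zeta_f=\zeta_g=\zeta_{fg}=1$ and the fact that a product of $1$-units is a $1$-unit. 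The remaining ingredients --- $(1+m)^{y+y'}=(1+m)^{y}(1+m)^{y'}$ and $(ab)^{y}=a^{y}b^{y}$ for $1$-units and $y,y'\in\Z_p$ --- follow, as you indicate, from the convergent-product definition by continuity in the exponent and density of $\N$ in $\Z_p$; stating that one-line justification would make the write-up fully self-contained. No gaps.
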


\begin{proof}
These results are described in the propositions 8.1.3 and 8.1.4 of \cite{goss2012basic}.
\end{proof}
The previous lemma guarantees that integer exponentiation is always possible, since every $j\in\Z$ is now identified with $s_j\in S_{\infty}$ (and in this way $\Z$ is seen as a subgroup of $S_{\infty}$). The \textit{Goss zeta function} is then defined as
\begin{equation*}
    \zeta_{K/\Fq(T)}^{[p]}(s)\coloneqq \sum_{\mathfrak{I}}\frac{1}{N^K_q(\mathfrak{I})^s}
\end{equation*}
where the sum runs over the non-zero ideals of $\Ok$. This series can be rewritten also as
\begin{equation}\label{GossZeta}
    \zeta_{K/\Fq(T)}^{[p]}(s)\coloneqq \sum_{f}\frac{A_K(f)}{f^s}
\end{equation}
where the sum runs over the monic polynomials $f\in\Fq[T]$ and $A_K(f)\in \Fp$ is the number mod $p$ of ideals $\mathfrak{I}\subset\Ok$ with norm $f$. In the following lemma we show that this series admits a non-trivial half-plane of convergence in $S_{\infty}$
\begin{lem}
The Goss zeta function converges for every $s=(x,y)\in S_{\infty}$ such that $\Vert x\Vert_{\infty}>1.$
\end{lem}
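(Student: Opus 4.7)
The plan is to group the series by degree and show that the partial sums form a Cauchy sequence in the complete non-archimedean field $\C_F$. Writing $S_N(s) := \sum_{\deg f \leq N} A_K(f) f^{-s}$ and $B_d(s) := \sum_{\deg f = d} A_K(f) f^{-s}$, I would estimate each block $B_d(s)$ and then apply the ultrametric inequality to bound $S_M(s) - S_N(s)$.

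The first step is to compute $\Vert f^s \Vert_\infty$ for a monic polynomial $f \in \Fq[T]$ of degree $d$. Since $\zeta_f = 1$ and $-v_\infty(f) = d$, the definition yields $f^s = x^d \langle f\rangle^y$. Because $\langle f\rangle$ is a $1$-unit and $\langle f\rangle^y$ is defined as the $p$-adically convergent product $\prod_{i}(1 + m_f^{p^i})^{a_i}$ of integer powers of $1$-units, it is itself a $1$-unit. Hence $\Vert \langle f\rangle^y\Vert_\infty = 1$, and consequently $\Vert f^s \Vert_\infty = \Vert x\Vert_\infty^{d}$.

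Since $A_K(f) \in \Fp$ embeds in $\C_F$ with norm at most $1$, each term of degree $d$ satisfies $\Vert A_K(f) f^{-s}\Vert_\infty \leq \Vert x\Vert_\infty^{-d}$. There are only finitely many (in fact, exactly $q^d$) monic polynomials of degree $d$, so the ultrametric inequality gives $\Vert B_d(s)\Vert_\infty \leq \Vert x\Vert_\infty^{-d}$. For any $M > N$, a second application of the ultrametric inequality then yields
\[
\Vert S_M(s) - S_N(s)\Vert_\infty \leq \max_{N < d \leq M} \Vert B_d(s)\Vert_\infty \leq \Vert x\Vert_\infty^{-(N+1)}.
\]
Under the hypothesis $\Vert x\Vert_\infty > 1$ the right-hand side tends to $0$ as $N \to \infty$, so $(S_N(s))$ is Cauchy in the complete field $\C_F$ and therefore converges.

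The argument amounts to routine ultrametric bookkeeping, with no genuine obstacle; the only step that requires a moment's thought is verifying that the $y$-th power operation preserves the group of $1$-units, which is immediate from the defining convergent product of $\langle f\rangle^y$. What makes the estimate work is the ultrametric cancellation within each degree, which lets us control the $q^d$ contributions of a fixed degree by a single geometric factor $\Vert x\Vert_\infty^{-d}$.
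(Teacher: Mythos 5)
Your proof is correct and follows essentially the same route as the paper: both rest on the computation $\Vert f^s\Vert_\infty = \Vert x\Vert_\infty^{\deg f}$ (using that $\langle f\rangle^y$ is a $1$-unit) together with $\Vert A_K(f)\Vert_\infty \leq 1$, so that the general term tends to $0$ and ultrametric convergence follows. You merely make explicit, via the degree blocks and the Cauchy estimate, the standard fact that the paper invokes in one line.
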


\begin{proof}
Since $A_K(f)\in\Fp$, we have either $\Vert A_K(f)\Vert_{\infty}=0$ or $\Vert A_K(f)\Vert_{\infty}=1$. The norm of the general term of the Goss zeta function is then \begin{align*}
    \left\Vert\frac{A_K(f)}{f^s}\right\Vert_{\infty}\leq \frac{1}{\Vert x\Vert_{\infty}^{\deg f}\Vert \langle f\rangle^y\Vert_{\infty}} = 
    \frac{1}{\Vert x\Vert_{\infty}^{\deg f}}
\end{align*}
since $\langle f\rangle$ is a unit in the ring of integers of $\C_F$. This upper bound goes to 0 as $\deg f$ goes to infinity, and this implies the convergence of the series.
\end{proof}
We remark that knowing the Goss zeta function is equivalent to know all the coefficients $A_K(f)\in\Fp\subset F_{\infty}$ thanks to the following lemma on general Dirichlet series \cite[Theorem 8.7.1]{goss2012basic}.

\begin{lem}\label{lemmaDirichlet}
Let $L(s)\coloneqq \sum_{f}\frac{c(f)}{f^s}$ be a Dirichlet series running over the monic polynomials $f\in\Fq[T]$ and with $c(f)\in F_{\infty}$. Assume $L(s)$ converges over a non-trivial half-plane of $S_{\infty}$: then $L(s)$ uniquely determines the coefficients $c(f)$.
\end{lem}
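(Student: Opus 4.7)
The plan is to prove the lemma by reducing, via linearity, to the following statement: if $L(s) \equiv 0$ on the half-plane $\{(x,y) \in S_\infty : \Vert x\Vert_\infty > c\}$, then $c(f) = 0$ for every monic $f \in \Fq[T]$. First I would reorganize the series by the degree of $f$: since $f^{-s} = x^{-\deg f} \langle f\rangle^{-y}$ when $s = (x,y)$ and $f$ is monic, one has
$$L(x,y) \;=\; \sum_{d \geq 0} x^{-d}\, \phi_d(y), \qquad \phi_d(y) \;:=\; \sum_{\substack{f \text{ monic}\\ \deg f = d}} c(f)\,\langle f\rangle^{-y},$$
where each $\phi_d$ is a finite sum (over $q^d$ polynomials) of continuous $\C_F$-valued functions of $y \in \Z_p$.

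Next, for each fixed $y \in \Z_p$, I would view $L(\cdot, y)$ as a power series in the variable $u := x^{-1}$ converging on the disc $\Vert u\Vert_\infty < 1/c$, whose coefficients are precisely the $\phi_d(y)$. Standard uniqueness of coefficients of a convergent non-archimedean power series — extracted inductively on $d$ as limits $u \to 0$, starting from $\phi_0(y) = \lim_{u \to 0} L(u^{-1},y)$ — then forces $\phi_d(y) = 0$ for every $d \geq 0$ and every $y \in \Z_p$.

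The final step is a Vandermonde argument. Each $\phi_d$ involves only the $q^d$ monic polynomials of degree $d$, and evaluating $\phi_d \equiv 0$ at the integer values $y = 0, 1, \dots, q^d - 1$ (for which $\langle f\rangle^{-y}$ reduces to the usual integer power) produces a homogeneous linear system in the unknowns $\{c(f) : \deg f = d\}$ whose coefficient matrix is the Vandermonde matrix with nodes $\langle f\rangle^{-1}$, indexed by monic $f$ of degree $d$. Since $\langle f\rangle = \pi^d f$ for such $f$ (because $\zeta_f = 1$), distinct polynomials yield distinct nodes, so the Vandermonde determinant is nonzero in $\C_F$ and every $c(f)$ must vanish. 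The main obstacle I anticipate is the non-archimedean analytic step: one has to ensure that convergence of $L$ on the open half-plane (rather than only on some discrete subset) genuinely makes $L(\cdot,y)$ an analytic function of $u = x^{-1}$ on a disc, so that the coefficients $\phi_d(y)$ can be extracted unambiguously. Once this is granted, everything that remains is elementary linear algebra over $\C_F$.
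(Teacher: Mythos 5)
Your argument is correct. Note that the paper does not actually prove this lemma: it is imported wholesale from Goss \cite[Theorem 8.7.1]{goss2012basic}, so there is no internal proof to compare against, and your write-up is a legitimate self-contained substitute following what is essentially the standard strategy --- first separate the series by $\deg f$ using the $x$-variable, then separate within a fixed degree using the $y$-variable. The analytic step you flag as the main obstacle is genuinely unproblematic in the ultrametric setting: since $\Vert\langle f\rangle^{-y}\Vert_{\infty}=1$, convergence of $\sum_f c(f)f^{-s}$ at $s=(x,y)$ is exactly the statement $\Vert c(f)\Vert_{\infty}\Vert x\Vert_{\infty}^{-\deg f}\to 0$, so the regrouping $L(x,y)=\sum_{d}x^{-d}\phi_d(y)$ is an unconditionally convergent rearrangement, the power series $\sum_d \phi_d(y)u^d$ in $u=x^{-1}$ converges on the full disc $\Vert u\Vert_{\infty}<1/c$ with continuous sum, and your inductive extraction of coefficients via $u\to 0$ (equivalently, the fact that a nonzero convergent power series over $\C_F$ has isolated zeros, while the punctured disc is infinite) is valid. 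Your Vandermonde step is likewise sound: for monic $f$ of degree $d$ one has $\zeta_f=1$ and hence $\langle f\rangle=\pi^{\deg f}f$, so distinct $f$ give distinct nodes $\langle f\rangle^{-1}$, the $q^d\times q^d$ Vandermonde determinant is nonzero, and all $c(f)$ with $\deg f=d$ vanish; this is a concrete realization of the linear independence of the distinct continuous characters $y\mapsto\langle f\rangle^{-y}$ of $\Z_p$, and one could equally have invoked Artin's independence of characters at that point. The only cosmetic caveat is that when you specialize $y$ to the integers $0,1,\dots,q^d-1$ you are implicitly using that the $\Z_p$-exponential $\langle f\rangle^{y}$ agrees with the ordinary power for integer $y$, which follows from the identity $\lp 1+m_f\rp^{p^i}=1+m_f^{p^i}$ in characteristic $p$ (and is recorded in the paper as $f^{s_j}=f^j$). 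No gaps.
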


However, the Goss zeta function does not provide an analytic counterpart to arithmetic equivalence, i.e. the result of Theorem \ref{thmDedekindZeta} cannot be recovered in the global function field case if we replace the Dedekind zeta function with the Goss zeta function. This is due to the coefficients $A(f)$, which are not non-negative integers but integers modulo $p$, and so they only provide partial information on the splitting types in the considered global function field extensions. In \cite{cornelissen2010arithmetic}, the authors obtained the analogue of Theorem \ref{thmDedekindZeta} for the Goss zeta function only if the degrees of the equivalent extensions was strictly less than $p$, and they gave an explicit counterexample when the degree is exactly $p$.

This problem suggests that one should look for a lifting in characteristic 0 of the Goss zeta function: this is exactly what has been done in \cite{cornelissen2010arithmetic} by means of the Witt vectors. In the next lines we recall the definition of the lifted zeta function, without the assumption that the considered extensions are geometric, and the needed properties of Witt vectors for this function to be defined: references for these technical facts can be found in \cite[Chapter 2]{serre2013local}, \cite[Appendix A]{fontaine2008theory} and in \cite[Chapter 1]{schneider2017galois}.

Let $W$ be the ring of Witt vectors of $\C_F$: this is a ring with set $\C_F^{\N}$ and sum and product given by relations between \textit{Witt polynomials}. Since $\C_F$ is a perfect field of characteristic $p$, then $W$ is a complete ring with respect to the $p$-adic topology, has uniformizer $p$ and residue field $\C_F$. If $pr: W\to\C_F$ is the surjective ring morphism given by the reduction modulo $p$,  the \textit{Teichm\"uller character} is the unique homomorphism of multiplicative groups $\chi: \C_F^*\to W^*$ such that
$$
\begin{tikzcd}
&W^*\arrow{r}{pr} &\C_F^*\\
&\C_F^*\arrow{u}{\chi}\arrow{ru}{id}. &
\end{tikzcd}
$$
which means that $\chi(a)\equiv a\bmod p$ for every $a\in\C_F^*$. We extend $\chi$ so that $\chi(0)=0$: then every Witt vector $x=(x_0,x_1,\ldots,)\in W$, with $x_i\in\C_F$ for every $i$, can be written uniquely as $x=\sum_{i=0}^{\infty}\chi\lp x_i^{1/p^i}\rp p^i$.
In particular, for every $a\in\C_F$ we have
$$
\begin{matrix}
    \chi(a)&=&(a,0,0,\ldots),\\
    p^n\chi(a)&=&(\underbrace{0,\ldots,0}_{n-1},a^{p^n},0,\ldots).
\end{matrix}
$$
If $a\in\C_F$ and $x=(x_n)_{n\in\N}$, one also has $\chi(a)x = (a^{p^n} x_n)_{n\in\N}$. Finally, if $n\in\N$, then $n=\sum_{i=0}^{\infty}\chi(n_i)p^i$ with $n_i\in\Fp$ for every $i$.



Now, we consider a weaker topology on $W$ than the $p$-adic one. Let $\mathcal{O}_{\C_F}\coloneqq \{x\in\C_F\colon \Vert x\Vert_{\infty}\leq 1\}$: for every open ideal $I\subset \mathcal{O}_{\C_F}$ and for every $m\in\N$, define 
$$V_{I,m}\coloneqq \{(x_n)_{n\in\N}\in W\colon x_0,x_1,\ldots,x_m\in I\}.$$
These sets define a fundamental system of neighbourhoods of 0 and hence a topology in $W$. In particular, the sequence of Witt vectors $w_k=((x_{k,n})_{n\in\N})$ converges to 0 under this topology if for every $m, r\in\N$ there exists $k_0\in\N$ such that $x_{k,0},\ldots,x_{k,m}\in \pi^r\mathcal{O}_{\C_F}$ for every $k\geq k_0$.

\begin{lem}\label{lemConvergence}
The ring $W$ is complete with respect to the weak topology, and any Cauchy sequence with respect to the $p$-adic topology is still Cauchy in the weak topology. Moreover, a series of Witt vectors $\sum x_k$ converges with respect to this topology if and only if $x_k$ converges to 0.
\end{lem}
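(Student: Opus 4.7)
The plan is to identify the weak topology on $W$ with (essentially) the product topology on $\C_F^{\N}$ via the coordinate bijection $w \leftrightarrow (x_n)_{n \in \N}$, and then to transfer the three assertions from $\C_F$ to $W$ using the polynomial nature of Witt addition and subtraction (valid on $\C_F$ because that field is perfect). Recall that a fundamental system of neighborhoods of $0$ for the weak topology is given by the sets $V_{\pi^r \mathcal{O}_{\C_F}, m}$, i.e.\ by the conditions ``$x_n \in \pi^r \mathcal{O}_{\C_F}$ for $n = 0, 1, \ldots, m$" with $r, m \in \N$.

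For completeness, we fix a Cauchy sequence $(w_k)_{k \in \N}$ in $W$ with $w_k = (x_{k, n})_{n \in \N}$. The Witt subtraction formula expresses each component $(w_k - w_{k'})_n$ as a polynomial $S_n$ with $\Z$-coefficients in the variables $x_{k, i}, x_{k', i}$ for $i \leq n$ (possibly involving iterated $p$-th roots, which are available in the perfect field $\C_F$). We then argue by induction on $n$ that $(x_{k, n})_k$ is Cauchy in $\C_F$: for $n = 0$ one has $(w_k - w_{k'})_0 = x_{k, 0} - x_{k', 0}$ directly; for $n \geq 1$, the inductive hypothesis forces the ``carry" terms in $S_n$ (which vanish identically whenever $x_{k, i} = x_{k', i}$ for all $i < n$) to tend to $0$, so that $x_{k, n} - x_{k', n}$ must tend to $0$ as well. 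Setting $x_n := \lim_k x_{k, n} \in \C_F$ and $w := (x_n)_{n \in \N}$, the same estimate applied to $w - w_k$ gives $w_k \to w$ in the weak topology.

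For the comparison with the $p$-adic topology, the formula $p^n \chi(a) = (0, \ldots, 0, a^{p^n}, 0, \ldots)$ recalled in the excerpt, together with the unique expansion $x = \sum_{i \geq 0} \chi(x_i^{1/p^i}) p^i$ and the perfectness of $\C_F$, yields $p^r W = \{(x_n)_{n \in \N} \in W : x_0 = \cdots = x_{r-1} = 0\}$, hence $p^r W \subseteq V_{I, r-1}$ for every open ideal $I \subseteq \mathcal{O}_{\C_F}$; any $p$-adically Cauchy sequence is therefore weakly Cauchy. For the series assertion, since $W$ is now complete in the weak topology, convergence of $\sum x_k$ is equivalent to Cauchy-ness of the partial sums $S_k$. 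The implication ``$S_k$ Cauchy $\Rightarrow x_k \to 0$" is immediate from $x_k = S_k - S_{k-1}$. For the converse, the Witt addition formula expresses the $n$-th component of the finite sum $x_{k'+1} + \cdots + x_k$ as an integer-coefficient polynomial (with $p$-th roots) in the components $(x_j)_i$ for $k' < j \leq k$ and $i \leq n$, vanishing when all its arguments vanish; so if $x_k \to 0$ weakly, then for each $m, r$ and for $k, k' \geq k_0$ sufficiently large, this component lies in $\pi^r \mathcal{O}_{\C_F}$, which gives $S_k - S_{k'} \in V_{\pi^r \mathcal{O}_{\C_F}, m}$.

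The main obstacle throughout this strategy is controlling the Witt polynomial formulas at the level of components: one must check both that the polynomials $S_n$ for sum and difference preserve the condition ``$x \in \pi^r \mathcal{O}_{\C_F}$" on their inputs, and that their correction/carry terms vanish identically when the corresponding inputs coincide. Both are essentially ultrametric estimates for polynomials with $\Z$-coefficients on a non-archimedean valued ring, but they must be handled carefully because the Witt-addition polynomials in characteristic $p$ involve iterated $p$-th roots of their arguments; once these estimates are in place, the three statements follow from completeness of $\C_F$ and standard facts about Cauchy sequences in product topologies.
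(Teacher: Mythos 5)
Your argument is correct in outline, but note that the paper does not actually prove this lemma: it simply refers to Schneider (Chapter 1.5) and Bourbaki (Chapter III.5), so a self-contained verification like yours is a genuinely different (and more informative) route. Your strategy --- identify the weak topology with the product topology on $\C_F^{\N}$, then transfer Cauchyness coordinatewise through the universal Witt addition/subtraction polynomials, and observe that $p^{m+1}W\subseteq V_{I,m}$ so the $p$-adic topology is finer --- is essentially the standard argument underlying those references, and all three assertions do follow as you describe. Two small points deserve care. First, the universal polynomials $S_n$ and $D_n$ are honest polynomials with $\Z$-coefficients; no $p$-th roots occur there (those enter only in the Teichm\"uller expansion $x=\sum\chi(x_i^{1/p^i})p^i$), so your parenthetical hedge is unnecessary and the ultrametric estimates are cleaner than you suggest. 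Second, in the completeness induction you should make explicit that the carry term $D_n-(X_n-Y_n)$ involves only the variables of index $<n$ and lies in the ideal generated by $X_i-Y_i$, $i<n$ (both follow from the ghost-component recursion); then, since the lower coordinate sequences are Cauchy and hence \emph{bounded}, the cofactors are uniformly bounded and the carry terms tend to $0$. Without the restriction to indices $<n$ and the boundedness remark, the step ``the inductive hypothesis forces the carry terms to tend to $0$'' would be circular. With these points supplied, and with the observation that each $V_{\pi^r\mathcal{O}_{\C_F},m}$ is a subgroup of $W$ (which is what makes your finite-sum estimate for $S_k-S_{k'}$ uniform in the number of summands), the proof is complete.
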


\begin{proof}
This is explained in \cite[Chapter 1.5]{schneider2017galois} and \cite[Chapter III.5]{bourbaki2007topologie}.
\end{proof}

We are now ready the define the zeta function we want. If $K/\Fq(T)$ is a finite separable extension, the associated \textit{lifted Goss zeta function} is defined as
\begin{align*}
    \zeta_{K/\Fq(T)}^{[0]}(s)\coloneqq \sum_{\substack{\mathfrak{I}\subset\Ok\\\mathfrak{I}\text{ non-trivial}}}\chi(\Nm_q^K(\mathfrak{I})^{-s}) = \sum_{\substack{f\in\Fq[T]\\f\text{ monic }}}B_K(f)\chi(f^{-s})
\end{align*}
where $B_K(f)$ is the (characteristic 0) number of ideals of $\Ok$ with norm equal to $f$. 
\begin{prop}
The lifted Goss zeta function converges for $s=(x,y)\in S_{\infty}$ with $\Vert x\Vert_{\infty} >1.$
\end{prop}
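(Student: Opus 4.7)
The plan is to invoke Lemma~\ref{lemConvergence} and reduce the problem to checking that the general term $B_K(f)\chi(f^{-s})$ tends to $0$ in the weak topology of $W$ as $f$ ranges over the monic polynomials of $\Fq[T]$. Enumerating these polynomials by increasing degree, it is enough to prove that for every $m\in\N$ and every open ideal $I\subset\mathcal{O}_{\C_F}$ one has $B_K(f)\chi(f^{-s})\in V_{I,m}$ whenever $\deg f$ is sufficiently large.

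The heart of the argument will be an explicit description of the Witt components of $B_K(f)\chi(f^{-s})$. Writing the Teichm\"uller expansion $B_K(f)=\sum_{i\geq 0}\chi(b_i)p^i$ with $b_i\in\Fp$, and combining the text's identities $p^i\chi(a)=(0,\dots,0,a^{p^i},0,\dots)$ (with the nonzero entry at position $i$) and $\chi(c)\cdot(x_n)_n=(c^{p^n}x_n)_n$, each summand $\chi(b_i)\,p^i\chi(f^{-s})$ is a Witt vector supported only at position $i$ with entry $b_i^{p^i}(f^{-s})^{p^i}=b_i(f^{-s})^{p^i}$, where the last equality uses $b_i^{p^i}=b_i$ for $b_i\in\Fp$. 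A short Witt-polynomial computation shows that Witt vectors with pairwise disjoint supports add by componentwise assembly, hence
$$B_K(f)\chi(f^{-s})=\bigl(b_0\,f^{-s},\ b_1\,(f^{-s})^p,\ b_2\,(f^{-s})^{p^2},\ \dots\bigr).$$

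Since $\langle f\rangle$ is a $1$-unit, $\|f^{-s}\|_\infty=\|x\|_\infty^{-\deg f}$, and the $i$-th component above has norm at most $\|x\|_\infty^{-p^i\deg f}$; under the hypothesis $\|x\|_\infty>1$ this is bounded uniformly in $i\geq 0$ by $\|x\|_\infty^{-\deg f}$. Given $I\supset\pi^r\mathcal{O}_{\C_F}$, choose $d_0$ with $\|x\|_\infty^{-d_0}\leq\|\pi\|_\infty^r$: then for every monic $f$ of degree at least $d_0$ and every $i\in\{0,\dots,m\}$, the $i$-th component lies in $\pi^r\mathcal{O}_{\C_F}\subset I$, so $B_K(f)\chi(f^{-s})\in V_{I,m}$. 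Since only finitely many monic polynomials have degree less than $d_0$, the general term converges to $0$ in the weak topology, as required.

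The main obstacle is the Witt-vector description displayed above. The delicate points are that the Teichm\"uller digits of the integer $B_K(f)$ genuinely lie in $\Fp$ (justifying $b_i^{p^i}=b_i$) and that Witt vectors with disjoint supports add by concatenation; both amount to routine verifications with the Witt polynomial identities, but they are exactly what make the component-wise bound $\|x\|_\infty^{-p^i\deg f}$ available. Once this estimate is in hand, the remaining ultrametric argument is immediate.
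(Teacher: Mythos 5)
Your proof is correct and follows essentially the same route as the paper: identify the Witt components of $B_K(f)\chi(f^{-s})$ as $\bigl(c_n(f)(f^{-s})^{p^n}\bigr)_{n\in\N}$ with $c_n(f)\in\F_p$, bound each component in norm by $\Vert x\Vert_{\infty}^{-\deg f}$, and conclude via Lemma~\ref{lemConvergence}. The only cosmetic difference is that the paper obtains the component formula in one step from the stated rule $\chi(a)\cdot(x_n)_{n\in\N}=(a^{p^n}x_n)_{n\in\N}$ applied to the Witt expansion of the integer $B_K(f)$, whereas you rederive it from the Teichm\"uller digit decomposition and disjoint-support addition; both verifications are equivalent.
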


\begin{proof}
Since $B_K(f)$ is a non-negative integer, its expansion in $W$ has the form\\ $(c_0(f),c_1(f),\ldots,c_n(f),\ldots)$ with $c_i(f)\in\F_p$ for every $i$. In particular, $c_0(f)\equiv B_K(f)\bmod p$, i.e. $c_0(f)=A_K(f)$. 
The lifted Goss zeta function can be rewritten as
\begin{align*}
    \sum_{\substack{f\in\Fq[T]\\f\text{ monic }}}B_K(f)\chi(f^{-s}) &= 
     \sum_{\substack{f\in\Fq[T]\\f\text{ monic }}}(c_n(f))_{n\in\N}\cdot (f^{-s},0,0,\ldots)\\
     &=\sum_{i=0}^{\infty}\lp\frac{c_n(f_i)}{f_i^{p^n s}}\rp_{n\in\N}
\end{align*}
where the monic polynomials of $\Fq[T]$ have been ordered increasingly in degree and coefficients ($f_0=1, f_1=T, f_2=T+1$ and so on). Let $s=(x,y)\in S_{\infty}$ with $\Vert x\Vert_{\infty}>1$: the $n$-th component of the $i$-th term of the series has absolute value in $\C_F$ equal to

$$
\left\Vert \frac{c_n(f)}{f_i^{p^ns}}\right\Vert_{\infty} \leq \Vert x\Vert_{\infty}^{-\deg f_i}
$$
and therefore every component has uniformly small absolute value as the degree of $f_i$ increases, i.e. the general term of the series converges to 0, hence the series converge by Lemma \ref{lemConvergence}. 
\end{proof}

Notice that the previous result implies $\zeta_{K/\Fq(T)}^{[0]}(s) \equiv \zeta_{K/\Fq(T)}^{[p]}(s) \bmod p.$ In particular, every zero of the lifted Goss zeta function is a zero for the characteristic $p$ Goss function.

\begin{coro}\label{corocoefficients}
Let $K$ and $L$ be finite separable extensions of $\Fq(T)$. Then $\zeta_{K/\Fq(T)}^{[0]}(s) = \zeta_{L/\Fq(T)}^{[0]}(s)$ if and only if $B_K(f)=B_L(f)$ for every monic polynomial $f\in\Fq[T]$.
\end{coro}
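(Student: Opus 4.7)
The ($\Leftarrow$) direction is immediate from the definition, so the work lies in ($\Rightarrow$). Assuming $\zeta_{K/\Fq(T)}^{[0]}(s) = \zeta_{L/\Fq(T)}^{[0]}(s)$, set $\delta(f) \coloneqq B_K(f) - B_L(f) \in \Z$; the goal is to prove $\delta(f) = 0$ for every monic $f \in \Fq[T]$. My plan is to show $p^n \mid \delta(f)$ for every $n \geq 0$ through an inductive mod-$p$ reduction that, at each step, invokes Lemma \ref{lemmaDirichlet}.

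The base case will use the residue map $\operatorname{pr} \colon W \to \C_F$, $(x_n)_n \mapsto x_0$. This is a ring homomorphism satisfying $\operatorname{pr} \circ \chi = \operatorname{id}$, and it is continuous for the weak topology, because the preimage of an open ideal $I \subset \mathcal{O}_{\C_F}$ is exactly $V_{I,0}$. Applying $\operatorname{pr}$ term by term to both sides of the hypothesis (legitimate by continuity and by the convergence of the series in the weak topology) will give $\zeta_{K/\Fq(T)}^{[p]}(s) = \zeta_{L/\Fq(T)}^{[p]}(s)$ as Dirichlet series in $\C_F$ with coefficients $A_K(f), A_L(f)$ living in $\F_p \subset F_\infty$. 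Lemma \ref{lemmaDirichlet} then forces $A_K(f) = A_L(f)$, i.e. $p \mid \delta(f)$, for every $f$.

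For the inductive step, suppose $p^m \mid \delta(f)$ for every $f$ and write $\delta(f) = p^m \delta_m(f)$ with $\delta_m(f) \in \Z$. I would introduce the auxiliary series $\Delta_m(s) \coloneqq \sum_f \delta_m(f)\chi(f^{-s})$. Because every integer in $W$ has all Witt coordinates in $\F_p$, the same estimate used to prove convergence of $\zeta_{K/\Fq(T)}^{[0]}(s)$ shows that $\Delta_m(s)$ converges on $\{\Vert x \Vert_\infty > 1\}$ in the weak topology. By construction $p^m \Delta_m(s) = \sum_f \delta(f)\chi(f^{-s}) = 0$, and since $W$ has no $p$-torsion, $\Delta_m(s) = 0$. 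Replaying the base-step argument on $\Delta_m(s)$ yields $p \mid \delta_m(f)$, hence $p^{m+1} \mid \delta(f)$. Iterating, $\delta(f)$ is divisible by every power of $p$, which forces $\delta(f) = 0$ since $\delta(f) \in \Z$.

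The main points to check carefully are that $\operatorname{pr}$ really commutes with the infinite Witt vector sum (which reduces to continuity of the $0$-th coordinate paired with Lemma \ref{lemConvergence}) and that $\Delta_m(s)$ is a bona fide convergent series. Both rest on the same observation: the Witt coordinates of any integer coefficient lie in $\F_p$, are therefore uniformly bounded in $\Vert \cdot \Vert_\infty$ by $1$, and so, after each reduction, the coefficients that appear are $\F_p$-valued and eligible for Lemma \ref{lemmaDirichlet}.
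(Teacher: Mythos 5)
Your argument is correct, but it is genuinely different from the one in the paper. The paper separates the identity $\zeta_{K/\Fq(T)}^{[0]}=\zeta_{L/\Fq(T)}^{[0]}$ into its individual Witt coordinates: the $0$-th coordinate gives equality of the mod-$p$ Dirichlet series directly, and for $n\geq 1$ the Witt addition law contributes a correction series $F_n$ depending only on the polynomials $f_i$ and the coordinates $c_0(f_i),\dots,c_{n-1}(f_i)$, so that an induction on $n$ cancels the corrections and reduces each level to Lemma \ref{lemmaDirichlet}. You instead run a $p$-adic divisibility induction on the integer difference $\delta(f)=B_K(f)-B_L(f)$: project to the residue field, divide by $p$, and repeat. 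This trades the paper's appeal to the explicit shape of the Witt addition polynomials (whose convergence for infinite sums is the most delicate point of that proof) for three structural facts: the $0$-th coordinate map $\operatorname{pr}\colon W\to\C_F$ is a continuous ring homomorphism for the weak topology (your identification of $\operatorname{pr}^{-1}(I)$ with $V_{I,0}$ is exactly right), multiplication by $p^m$ is continuous so the factor can be pulled out of the convergent series, and $W(\C_F)$ is an integral domain, hence $p$-torsion-free. Your observation that every integer, positive or negative, lies in $\Z_p=W(\F_p)\subset W$ and therefore has $\F_p$-valued Witt coordinates is the correct justification both for the convergence of $\Delta_m(s)$ via Lemma \ref{lemConvergence} and for the applicability of Lemma \ref{lemmaDirichlet} to the reduced series. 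Both proofs ultimately rest on the same two lemmas; yours is arguably cleaner to make fully rigorous, while the paper's makes visible the stronger statement that the zeta function determines each Witt coordinate $c_n(f)$ of $B_K(f)$ separately.
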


\begin{proof}
We show that equality of zeta functions determines equality of the coefficients (the converse implication is trivial). Let $s=(x,y)\in S_{\infty}$ with $\Vert x\Vert_{\infty}>1$ and let us write
$$ 
\sum_{i=0}^{\infty}\frac{B_K(f_i)}{\chi(f_i^s)} = \sum_{i=0}^{\infty}\lp\frac{c_n(f_i)}{f_i^{p^n s}}\rp_{n\in\N}=
\sum_{i=0}^{\infty}\lp\frac{d_n(f_i)}{f_i^{p^n s}}\rp_{n\in\N}=
\sum_{i=0}^{\infty}\frac{B_L(f_i)}{\chi(f_i^s)}.
$$
By the rules of Witt vectors, the 0-component of the zeta function is the only one which is equal to the sum of the 0-components of the terms in the series, and thus we obtain
$$
\sum_{i=0}^{\infty}\frac{c_0(f_i)}{f_i^s} = \sum_{i=0}^{\infty}\frac{d_0(f_i)}{f_i^s}
$$
which yields $c_0(f_i)=d_0(f_i)$ for every $i$ from Lemma \ref{lemmaDirichlet}.

For what concerns the addition in the remaining components, the rules of Witt vectors are such that the $n$-component is given by
$$\sum_{i=0}^{\infty}\frac{c_n(f_i)}{f_i^{p^ns}} + F_n((f_i^s,c_0(f_i),\ldots,c_{n-1}(f_i))_{i\in\N})$$
where $F_n$ is a precise series which converges under our hypotheses and only depends on the polynomials $f_i$ and on the coefficients $c_0(f_i),\ldots,c_{n-1}(f_i)$. An induction argument then shows that the equality of the $n$-components gives
$$
\sum_{i=0}^{\infty}\frac{c_n(f_i)}{f_i^{p^n s}} = \sum_{i=0}^{\infty}\frac{d_n(f_i)}{f_i^{p^n s}}
$$
and thus $c_n(f_i)=d_n(f_i)$ for every $i$. Applying this for every $n\in\N$, we finally obtain $B_K(f_i)=B_L(f_i)$ for every $i$.
\end{proof}

\begin{thm}\label{ThmAnalyticEquiv}
Let $K$ and $L$ be finite separable extensions of $\Fq(T)$. Then $K$ and $L$ are arithmetically equivalent over $\Fq(T)$ with no exceptions if and only if $\zeta_{K/\Fq(T)}^{[0]} = \zeta_{L/\Fq(T)}^{[0]}$.
\end{thm}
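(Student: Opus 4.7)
The plan is to use Corollary \ref{corocoefficients} to reduce the statement to a purely arithmetic comparison: the equality $\zeta_{K/\Fq(T)}^{[0]} = \zeta_{L/\Fq(T)}^{[0]}$ is equivalent to $B_K(f) = B_L(f)$ for every monic $f \in \Fq[T]$, so the theorem amounts to proving that $B_K = B_L$ on all monic polynomials if and only if $f_K(\mathfrak{p}) = f_L(\mathfrak{p})$ for every non-zero prime $\mathfrak{p}$ of $\Fq[T]$, with no exceptional set allowed.

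For the direction ``arithmetic equivalence with no exceptions implies equal zeta'', I would fix a monic irreducible $\mathfrak{p} \in \Fq[T]$ with splitting type $(f_1, \ldots, f_r)$ in $K$. If $\mathfrak{p}\Ok = \mathfrak{P}_1^{e_1}\cdots\mathfrak{P}_r^{e_r}$, then an ideal $I \subset \Ok$ has $N_q^K(I) = \mathfrak{p}^k$ if and only if $I = \prod_{i=1}^r \mathfrak{P}_i^{a_i}$ with $\sum_i a_i f_i = k$. Hence $B_K(\mathfrak{p}^k)$ depends only on the splitting type of $\mathfrak{p}$ in $K$, so the hypothesis gives $B_K(\mathfrak{p}^k) = B_L(\mathfrak{p}^k)$ for every $k \geq 0$ and every $\mathfrak{p}$. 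Since both $B_K$ and $B_L$ are multiplicative on coprime arguments (unique factorization of ideals in the Dedekind domains $\Ok, \Ol$, combined with the multiplicativity of the norm), this extends to $B_K(f) = B_L(f)$ for every monic $f \in \Fq[T]$, and Corollary \ref{corocoefficients} concludes.

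For the converse direction, assume $B_K(f) = B_L(f)$ for every monic $f$. Fix a monic irreducible $\mathfrak{p}$ and form the local generating series in $\Z[[x]]$:
\begin{equation*}
\sum_{k \geq 0} B_K(\mathfrak{p}^k)\, x^k \;=\; \prod_{i=1}^{r_K} \frac{1}{1 - x^{f_i^K(\mathfrak{p})}}, \qquad \sum_{k \geq 0} B_L(\mathfrak{p}^k)\, x^k \;=\; \prod_{j=1}^{r_L} \frac{1}{1 - x^{f_j^L(\mathfrak{p})}}.
\end{equation*}
By assumption these two rational functions coincide, and the multiset of exponents $\{f_i\}$ is determined by the denominator: for instance, reading off the multiplicity of each cyclotomic factor $\Phi_d(x)$ in $\prod_i (1 - x^{f_i})$ gives $\#\{i : d \mid f_i\}$, and a Möbius inversion recovers $\#\{i : f_i = d\}$ for each $d \geq 1$. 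Hence $f_K(\mathfrak{p}) = f_L(\mathfrak{p})$ for every $\mathfrak{p}$, with no exceptions.

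The whole argument is essentially formal once Corollary \ref{corocoefficients} is in hand; the only point requiring minor care is the unique reconstruction of the splitting type from the local Euler factor, which is the main (but mild) obstacle and is handled either by the cyclotomic/Möbius argument above or by comparing logarithmic derivatives.
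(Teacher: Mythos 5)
Your proposal is correct and follows the same overall architecture as the paper's proof: both reduce the theorem, via Corollary \ref{corocoefficients} and the multiplicativity of $B_K$ on coprime arguments, to showing that the local data $\{B_K(\mathfrak{p}^k)\}_{k\geq 0}$ at each monic irreducible $\mathfrak{p}$ carries exactly the same information as the splitting type $f_K(\mathfrak{p})$. The one place where you diverge is the local reconstruction step: the paper invokes Klingen's combinatorial identity expressing $B_K(\mathfrak{p}^m)$ as a sum over partitions of products of binomial coefficients in the counts $C_K(\mathfrak{p}^{m'})$ of prime ideals of norm $\mathfrak{p}^{m'}$, and argues that this relation is triangular and hence invertible, whereas you package the same data into the local Euler factor $\prod_i\left(1-x^{f_i}\right)^{-1}\in\Z[[x]]$ and recover the multiset $\{f_i\}$ from the cyclotomic factorization of the denominator by M\"obius inversion. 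Your device is arguably cleaner and self-contained (no external citation needed), your generating-function identity is exactly right, and you also correctly state that $B_K$ is multiplicative on coprime arguments where the paper's text has an additivity slip; the two arguments are otherwise interchangeable.
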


\begin{proof}
Let $s\in\N$, $s\geq 1$ and express the lifted Goss zeta function of $K/\Fq(T)$ at $s$ as
$$\sum_{\substack{f\in\Fq[T]\\f\text{ monic }}}\frac{B_K(f)}{\chi(f)^s}.$$
Notice that if $f$ and $g$ are coprime irreducible polynomials in $\Fq[T]$ then 
$$B_K(f^m g^n) = B_K(f^m)+B_K(g^n)$$
so that $B_K(\cdot)$ is an additive (but not completely additive) function over the prime ideals of $\Fq[T]$.

Now, for every $\mathfrak{p}\in\Fq[T]$ monic and irreducible and for every $m\in\N$, let $C_K(\mathfrak{p}^m)$ be the number of prime ideals of $\Ok$ having norm equal to $\mathfrak{p}^m$. Unique factorization as product of prime ideals implies the following relation (see \cite[Theorem 2.1]{klingen1998arithmetical}):
\begin{align*}
    B_K(\mathfrak{p}^m) = \sum_{\substack{a_1m_1+\cdots+a_r m_r=m\\1\leq a_1<\cdots<a_r\\1\leq m_1,\ldots,m_r}}\prod_{i=1}^r\binom{C_K(\mathfrak{p}^{m_i})+a_i-1}{a_i}
\end{align*}
and the right hand side of this expression can be rewritten as $C_K(\mathfrak{p}^m)$ plus a function of the numbers $C_K(\mathfrak{p}^{m'})$ with $m'<m$. This shows not only that knowing all numbers $C_K(\mathfrak{p}^{m'})$ determines all numbers $B_K(\mathfrak{p}^m)$, but also that the converse holds, since $B_K(\mathfrak{p})=C_K(\mathfrak{p})$ and the values for successive powers are obtained with a recursive procedure.

But now, knowledge of the numbers $B_K(\mathfrak{p}^m)$ for every irreducible monic polynomial $\mathfrak{p}\in\Fq[T]$ gives the lifted Goss zeta function, while knowledge of the numbers $C_K(\mathfrak{p}^m)$ gives the splitting types of every finite prime of $\Fq(T)$ in $K$, and the claim follows from Corollary \ref{corocoefficients}.
\end{proof}

\begin{rmk}
The lifted Goss zeta function can be rewritten as an Euler product
\begin{align*}
    \prod_{\mathfrak{P}\subset\Ok}\lp 1-\chi(\Nm_q^K(\mathfrak{P})^{-s})\rp^{-1}
\end{align*}
which converges over the half-plane  $\{s=(x,y)\in S_{\infty}\colon \Vert x\Vert_{\infty} >1\}.$
\end{rmk}

\begin{rmk}
It is important to underline that, in the course of the proof of Theorem \ref{ThmAnalyticEquiv}, equivalence has been established among equal lifted Goss zeta functions and arithmetic equivalence \textit{with no exceptions}: in fact, an arithmetic equivalence with a non-zero number of exceptions would imply that the two zeta functions coincide up to finitely many factors of their Euler products. If we divided $\zeta_{K/\Fq(T)}^{[0]}$ with $\zeta_{L/\Fq(T)}^{[0]}$ we would obtain then an expression of the form
$$\frac{\prod_{i=1}^a\lp 1-\chi(N_q^K(\mathfrak{P}_i)^{-s})\rp}{\prod_{j=1}^b(1-\chi(N_q^K(\mathfrak{Q}_j)^{-s}))}$$
but there is no obvious reason why this quotient should be equal to 1, differently from what we know for number fields: the correspondent quotient in the number fields case is equal to 1 thanks to reasons which are both analytic (the variable $s$ belongs to $\C$ and the zeta functions are known to satisfy a functional equation) and arithmetic (for every prime appearing in the product, its characteristic explicitly appears thanks to the norm). See \cite[Lemma 2]{perlis1977equation}.
\end{rmk}

\begin{coro}
If $K$ and $L$ are arithmetically equivalent and their Galois closure is unramified at finite places, then $\zeta_{K/\Fq(T)}^{[0]}(s) = \zeta_{L/\Fq(T)}^{[0]}(s)$.
\end{coro}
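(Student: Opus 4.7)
The corollary is an immediate combination of two earlier results, so the plan is essentially just to stitch them together, with a short verification that the hypothesis on unramifiedness is the right one.

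First I would invoke the corollary of Theorem 1 stating that the exceptional set of an arithmetic equivalence is finite and contained in the set of primes of $\Fq[T]$ that ramify in the common Galois closure $N/\Fq(T)$. Since the lifted Goss zeta function $\zeta_{K/\Fq(T)}^{[0]}$ is defined by a sum over the non-zero (i.e.\ finite) ideals of $\Ok$, the only ramification that matters is the one at finite places of $\Fq(T)$. So the hypothesis ``unramified at finite places'' suffices to force the exceptional set of the arithmetic equivalence between $K$ and $L$ to be empty.

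Then I would apply Theorem \ref{ThmAnalyticEquiv}, which states that arithmetic equivalence \emph{with empty exceptional set} is equivalent to equality of the lifted Goss zeta functions, and conclude $\zeta_{K/\Fq(T)}^{[0]}(s)=\zeta_{L/\Fq(T)}^{[0]}(s)$.

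There is no real obstacle here: the work has been done in the preceding corollary and in Theorem \ref{ThmAnalyticEquiv}. The only point worth stating explicitly in the write-up is the matching of ``unramified at finite places'' in the hypothesis with the ``unramified'' assumption in the first corollary, justified by the fact that the set of exceptions consists of finite primes of $\Fq[T]$, so infinite places are irrelevant to the conclusion.
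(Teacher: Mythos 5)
Your proposal is correct and is exactly the argument the paper intends: combine the earlier corollary (exceptions are contained in the primes ramifying in the Galois closure, hence absent when that closure is unramified at finite places) with Theorem \ref{ThmAnalyticEquiv} to get equality of the lifted Goss zeta functions. The paper leaves this corollary without an explicit proof, but uses precisely this chain of reasoning later when showing that $K(p)$ and $K'(p)$ share the same lifted Goss zeta function, so there is nothing to add.
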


\section{Arithmetically equivalent siblings over $\F_2(T)$ but not over $\F_4(T)$}
The algebraic definition of arithmetic equivalence via splitting types immediately shows that two extensions $K/\F_{q^r}(T)$ and $L/\F_{q^r}(T)$ equivalent over $\F_{q^r}(T)$ are equivalent also over $\Fq(T)$. But does the converse hold? In particular, let $K$ and $L$ be two non-geometric and equivalent extensions over $\Fq(T)$: they have the same fields of constants $\F_{q^r}$ by Proposition \ref{propGroupProperties}. Does this imply that $K$ and $L$ are arithmetically equivalent over $\F_{q^r}(T)$? 

The answer is no, and this is immediate to see in the following cases: consider a quadratic extension $K$ of $\F_{q^r}(T)$ such that $K/\Fq(T)$ is separable but not Galois: by Galois theory, this implies that there exists an isomorphic extension $L/\Fq(T)$ such that $K\neq L$: in particular, $L/\F_{q^r}(T)$ is a quadratic extension which cannot be equivalent to $K/\F_{q^r}(T)$, since otherwise $K=L$ by Proposition \ref{propGroupProperties}. These extensions do exist: as we will see in further lines, it is not difficult to construct a non-Galois degree 4 extension of $\F_2(T)$ which contains $\F_4(T)$.  However, these examples are of limited interest, since we consider fields which are isomorphic (and hence equivalent) and for which the equality of zeta functions is trivially guaranteed by the isomorphism. What about equivalent but not isomorphic fields over $\Fq(T)$? 

Two fields $K$ and $L$ arithmetically equivalent over $\Fq(T)$ but not isomorphic as extensions of $\Fq(T)$ will be called \textit{arithmetically equivalent siblings}. The goal of this section is to look for non-geometric arithmetically equivalent siblings over $\Fq(T)$ with field of constants $\F_{q^r}$ such that they are not equivalent over $\F_{q^r}(T)$; this research is equivalent to solve an Inverse Galois Problem for a group $G$ which contains two Gassmann equivalent but not conjugated subgroups and such that the fields corresponding to these subgroups contain the cyclic extension $\F_{q^r}(T)/\Fq(T)$.

\begin{rmk}
From now on, the \textit{Galois group of an extension} will be the Galois group of its Galois closure. Following the Butler-McKay notation \cite{butler1983transitive}, the Galois group of a field of degree $n$ will be labeled as $n$T$i$, meaning that it is isomorphic to the $i$-th transitive subgroup of $S_n$ (for complete lists of transitive subgroups of $S_n$ with small $n$ see the database \cite{klunersMalle}).
\end{rmk}

\begin{prop}
Let $K$ and $L$ be non-geometric arithmetically equivalent siblings over $\Fq(T)$ with field of constants $\F_{q^r}$ and degree $d$ at most 8. Let $G$ be their Galois group: then $d=8, r=2$ and $G\simeq C_8\rtimes V_4$, where $C_8$ is the cyclic group with 8 elements and $V_4$ is the Klein group.
\end{prop}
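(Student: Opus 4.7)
The plan is to narrow $(d,r)$ by divisibility and elementary sibling constraints, then to use Clifford theory to eliminate one surviving case and a finite case analysis in $S_8$ to identify $G$ in the other. Since $K\supset\F_{q^r}(T)\supset\F_q(T)$ one has $r\mid d$, so with $r\geq 2$ and $d\leq 8$ only $(d,r)\in\{(2,2),(4,2),(4,4),(6,2),(6,3),(6,6),(7,7),(8,2),(8,4),(8,8)\}$ survive. Whenever $r=d$, the extension $K=\F_{q^d}(T)$ is Galois over $\F_q(T)$ and Proposition \ref{propGroupProperties} forces $L=K$, contradicting siblinghood; the same proposition also eliminates every $d\leq 6$. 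Only $(d,r)\in\{(8,2),(8,4)\}$ remains.

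To rule out $(d,r)=(8,4)$ I would use a Clifford/Mackey argument. Let $N$ be the common Galois closure of $K$ and $L$ over $\F_q(T)$, set $G=\Gal(N/\F_q(T))$, and $N_0=\Gal(N/\F_{q^4}(T))\lhd G$ of index $4$. Both $H_K=\Gal(N/K)$ and $H_L=\Gal(N/L)$ have index $2$ in $N_0$, hence are kernels of nontrivial characters $\chi_K,\chi_L\colon N_0\to C_2$. By transitivity of induction, $1_{H_K}^{G}=1_{N_0}^{G}+\chi_K^{G}$ and analogously for $L$, so the Gassmann equivalence $1_{H_K}^{G}\cong 1_{H_L}^{G}$ reduces to $\chi_K^{G}\cong\chi_L^{G}$. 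Since $N_0\lhd G$, Mackey's restriction formula gives $\chi^{G}|_{N_0}=\bigoplus_{g\in G/N_0}\chi^{g}$, so equating multisets of $1$-dimensional characters of $N_0$ forces $\chi_L=\chi_K^{g_0}$ for some $g_0\in G$, whence $H_L=g_0 H_K g_0^{-1}$, contradicting siblinghood. Hence $r=2$.

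For the surviving case $(d,r)=(8,2)$ I would run through the fifty Butler--McKay conjugacy classes of transitive subgroups of $S_8$ and retain only those $G$ that admit both a non-conjugate Gassmann-equivalent pair of index-$8$ subgroups and an index-$2$ normal subgroup $N_0$ containing such a pair. The only survivor is the holomorph $\mathrm{Hol}(C_8)=C_8\rtimes\Aut(C_8)\simeq C_8\rtimes V_4$ of order $32$, and one then exhibits explicitly a non-conjugate Gassmann pair of order-$4$ subgroups inside a suitable $N_0$. The main obstacle is precisely this last enumeration: the Clifford step is short and clean, whereas the degree-$8$ case analysis is exhaustive and is most efficiently carried out with a computer algebra system, leveraging the transitive-group library together with the known list of Gassmann pairs in small degree.
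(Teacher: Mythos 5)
Your proposal is correct and reaches the stated conclusion, but part of the route genuinely differs from the paper's. The reduction to $d=8$ is the same in both: Proposition \ref{propGroupProperties} disposes of $d\leq 6$ and of the Galois case $r=d$ (which covers $d=7$). From there the paper simply invokes the Bosma--de Smit classification of transitive degree-$8$ groups admitting non-conjugate Gassmann-equivalent point stabilizers (only 8T15 and 8T23 occur), discards 8T23 because its degree-$8$ fields have no nontrivial cyclic subextension, and reads off $r=2$ from the fact that the maximal cyclic subextension of an 8T15-field is quadratic. Your Clifford/Mackey elimination of $(d,r)=(8,4)$ is a genuine addition and is correct: since $H_K$ and $H_L$ have index $2$ in the normal subgroup $N_0=\Gal(N/\F_{q^4}(T))$, they are kernels of quadratic characters, transitivity of induction reduces Gassmann equivalence to $\chi_K^G\cong\chi_L^G$, and restricting back to $N_0$ forces $\chi_L$ to be a $G$-conjugate of $\chi_K$, hence $H_L$ conjugate to $H_K$. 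This buys a self-contained proof of the more general fact that siblings can never have relative degree $2$ over their common constant field extension, something the paper only obtains indirectly through the group classification. The trade-off sits in your last step: the exhaustive search through the transitive subgroups of $S_8$ is precisely the content of the theorem of Bosma and de Smit that the paper cites (supplemented by the LMFDB data ruling out 8T23 via the quadratic-subfield condition), so your plan is sound but leaves its heaviest step as an unexecuted computation where the paper substitutes a reference; executed or cited, it yields $G\simeq C_8\rtimes V_4$ and $r=2$ as claimed.
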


\begin{proof}
If $d\leq 6$, then $K$ and $L$ are isomorphic over $\Fq(T)$ by Proposition \ref{propGroupProperties}, so that they are not siblings. If $d=7$, then $K=\F_{q^7}(T)=L$ since the extensions are non-geometric. If $d=8$, it is known that there are only two possibilities for $G$ \cite[Theorem 3]{bosma2002arithmetically}, which are either the group 8T15 (which is exactly $C_8\rtimes V_4$) or the group 8T23. However, the second case is not possible since an extension of degree 8 with group 8T23 does not contain non-trivial cyclic subextensions (see the properties of fields with this Galois group at the corresponding page in the LMFDB database \cite{lmfdb}). Instead, the maximal cyclic subextension in a field of degree 8 with group 8T15 is 2, so that this is the only available case.
\end{proof}
\begin{rmk}
If $K$ and $L$ are non-geometric arithmetically equivalent siblings over $\Fq(T)$ with group 8T15, then they cannot be equivalent over $\F_{q^2}(T)$ since the two relative extensions have degree 4 and they would be forced to be isomorphic by Proposition \ref{propGroupProperties}, forcing the isomorphism between $K$ and $L$ over $\Fq(T)$.
\end{rmk}
Extensions with group 8T15 in the number field case were explicitly constructed by Perlis \cite{perlis1977equation}: in fact, he proved that the number field $K=\Q(\alpha)$ generated by a chosen root $\alpha$ of $X^8-a$ (with $a\in\Z$ and $a\neq n^2$, $a\neq \pm 2n^2$ for $n\in\Z$) has Galois group 8T15. Moreover, its Galois closure is $N=\Q(\alpha,\zeta_8)=KL$, where $\zeta_8$ is a primitive 8-th root of unity and $L=\Q(\zeta_8)=\Q(i,\sqrt{2})$ and one has $\Gal(N/L)\simeq C_8$ and $\Gal(N/K)\simeq V_4$: finally, the field of degree 8 generated by $\sqrt{2}\alpha$, corresponding to the polynomial $X^8-16a$, is contained in $N$ and is an arithmetically equivalent sibling of $K$, since Perlis proved that $\Gal(N/\Q(\sqrt{2}\alpha))$ is Gassmann equivalent but not conjugated to $\Gal(N/K)$. This was obtained thanks to the following lemma \cite{perlis1977equation}.

\begin{lem}\label{lemmaCohomologic}
Let $G= A\rtimes H$ be a finite group with $A$ abelian and $H$ containing a $p$-sylow which is not cyclic for some prime $p$. Then $G$ contains two Gassmann equivalent subgroups which are not conjugated. More explicitly, if $\sigma : H\to G$ is a section of the semidirect product and if $\chi:H\to A$ is non-trivial in the cohomology group $H^1(H,A)$ but is trivial in every $H^1(\langle h\rangle,A)$ for every cyclic subgroup $\langle h\rangle\subset H$, then $(\chi\cdot\sigma): H\to G$ is a section and $\sigma(H)$ and $(\chi\cdot\sigma)(H)$ are Gassmann equivalent but not conjugated.
\end{lem}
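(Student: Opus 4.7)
The plan is to verify the three explicit claims—that $(\chi\cdot\sigma)$ is a section, that $\sigma(H)$ and $(\chi\cdot\sigma)(H)$ are Gassmann equivalent, and that they are not $G$-conjugate—assuming the cocycle $\chi$, while noting that producing such a $\chi$ is what makes fundamental use of the non-cyclic $p$-Sylow hypothesis. Throughout I would work additively in $A$ with action denoted ${}^h a$, and multiplicatively in $G$.

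First I would check that $(\chi\cdot\sigma)$ is a section. Its composition with the projection $G\to H$ remains the identity on $H$, because $\chi$ takes values in $A=\ker(G\to H)$, so only the homomorphism property is at issue. Expanding
\begin{equation*}
(\chi\cdot\sigma)(h_1)(\chi\cdot\sigma)(h_2) = \chi(h_1)\,\sigma(h_1)\,\chi(h_2)\,\sigma(h_1)^{-1}\,\sigma(h_1 h_2) = \bigl(\chi(h_1)+{}^{h_1}\chi(h_2)\bigr)\sigma(h_1 h_2),
\end{equation*}
the result equals $(\chi\cdot\sigma)(h_1 h_2)$ precisely because $\chi$ satisfies the $1$-cocycle relation; this is what identifies $H^1$-classes with $A$-conjugacy classes of splittings.

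For Gassmann equivalence I would exploit that $\chi$ is a coboundary on every cyclic subgroup: for each $h\in H$ pick $a=a(h)\in A$ with $\chi(h)=a-{}^h a$. Conjugation by $a\in A\subset G$ then yields
\begin{equation*}
a\,\sigma(h)\,a^{-1}=(a-{}^h a)\,\sigma(h)=\chi(h)\,\sigma(h)=(\chi\cdot\sigma)(h).
\end{equation*}
Thus the bijection $\sigma(h)\mapsto (\chi\cdot\sigma)(h)$ between $\sigma(H)$ and $(\chi\cdot\sigma)(H)$ preserves the $G$-conjugacy class of every element; intersecting both sides with an arbitrary conjugacy class $\mathcal{C}\subset G$ gives $|\mathcal{C}\cap\sigma(H)|=|\mathcal{C}\cap(\chi\cdot\sigma)(H)|$, which is exactly the Gassmann condition.

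For non-conjugation I would argue by contradiction: suppose $g\in G$ satisfies $g\sigma(H)g^{-1}=(\chi\cdot\sigma)(H)$, decompose $g=a\sigma(h_0)$ with $a\in A$, $h_0\in H$, and compute
\begin{equation*}
g\,\sigma(h)\,g^{-1} = a\,\sigma(h_0 h h_0^{-1})\,a^{-1} = \bigl(a-{}^{h_0 h h_0^{-1}} a\bigr)\,\sigma(h_0 h h_0^{-1}).
\end{equation*}
Uniqueness of the semidirect product decomposition, together with the requirement that the right-hand side lie in $(\chi\cdot\sigma)(H)$, forces $\chi(h_0 h h_0^{-1})=a-{}^{h_0 h h_0^{-1}}a$ for every $h\in H$; as $h$ runs over $H$ so does $h_0 h h_0^{-1}$, so $\chi$ would be globally a coboundary on $H$, contradicting its non-triviality in $H^1(H,A)$. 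The main obstacle—where the non-cyclic $p$-Sylow hypothesis is truly used—is producing such a $\chi$ in the first place: one must invoke the classical cohomological fact that the restriction map $H^1(H,A)\to\prod_{\langle h\rangle\leq H} H^1(\langle h\rangle,A)$ has non-trivial kernel (for a suitably rich $H$-module $A$) exactly when some Sylow subgroup of $H$ is non-cyclic. In the concrete application to $G\simeq C_8\rtimes V_4$ appearing in the next section, the Klein four-group is the non-cyclic $2$-Sylow of $H=V_4$, and existence of $\chi$ would be verified by an explicit computation of $H^1(V_4,C_8)$ under the relevant action.
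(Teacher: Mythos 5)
The paper gives no proof of this lemma at all --- it is quoted from Perlis \cite{perlis1977equation} --- so there is nothing internal to compare against; judged on its own, your verification of the three explicit claims (that $\chi\cdot\sigma$ is a section via the cocycle identity, that local triviality of $\chi$ on cyclic subgroups makes $\sigma(h)$ and $(\chi\cdot\sigma)(h)$ conjugate by an element of $A$ and hence the two subgroups Gassmann equivalent, and that global conjugacy would force $\chi$ to be a coboundary) is correct and is exactly Perlis's argument. The one point worth flagging is the first sentence of the lemma: the bare existence of such a $\chi$ does not follow from the Sylow hypothesis alone --- for instance with $A$ trivial and $H=V_4$ the group $G$ is abelian and has no non-conjugate Gassmann equivalent subgroups --- so the hypothesis on $H$ only guarantees that a suitable $H$-module $A$ \emph{can} exist, and in any concrete application one must exhibit $\chi$ explicitly, as the paper does for $C_8\rtimes V_4$ in step 8 of the theorem that follows. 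You correctly identify and isolate this as the unproved input rather than papering over it, which is the right reading of the lemma's ``more explicitly'' formulation.
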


Since the properties of the group 8T15 are unchanged whether it is realized as Galois group of a number field or of a global function field, we take inspiration from Perlis' work in the research of a similar example for global function fields. The choice of the polynomial is obviously harder, since this time the auxiliary biquadratic extension cannot be a cyclotomic extension (i.e. a constant field extension) because every such extension is cyclic in the case of global function fields, so that Lemma \ref{lemmaCohomologic} cannot be applied. However, the example is good enough to inspire a solution for the problem in characteristic 2.

\begin{thm}
There exist two non-geometric arithmetically equivalent siblings $K$ and $K'$ over $\F_2(T)$ with group $C_8\rtimes V_4$ such that they are not equivalent over $\F_4(T)$. They are defined by the polynomials
\begin{align}\label{polK}
    X^8 &+ TX^6 + TX^5 + (T^5 + T^4 + T^3 + T^2 + T + 1)X^4 + TX^3 \nonumber\\
    &+ (T^7 + T^5 + T^3 + T^2)X^2 + (T^7 + T^4)X + (T^{10} + T^8 + T^6)
\end{align}
and
\begin{align}\label{polL}
    X^8 &+ TX^6 + TX^5 + (T^5 + T^3 + T^2 + T + 1)X^4 + TX^3\nonumber\\
    &+(T^5 + T^4 + T^3 + T^2)X^2 + T^4X + (T^{12} + T^{11} + T^9 + T^7 + T^6).
\end{align}
\end{thm}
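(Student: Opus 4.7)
The approach adapts Perlis's classical construction of non-isomorphic but arithmetically equivalent degree-$8$ number fields defined by $X^8 - a$ and $X^8 - 16a$. Two obstructions force a reformulation in characteristic two: first, the polynomial $X^n - a$ is inseparable over $\F_2(T)$ for any even $n$, so a separable model of a cyclic degree-$8$ extension must replace the pure radical construction; second, every cyclotomic (hence every constant-field) extension of $\F_2(T)$ is cyclic, so the biquadratic extension that supplies the $V_4$-quotient in $G = C_8 \rtimes V_4$ cannot come from a cyclotomic tower and must be built as the compositum of $\F_4(T)/\F_2(T)$ with a non-constant Artin--Schreier quadratic extension of $\F_2(T)$.

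The construction I have in mind proceeds in three steps. First, I would exhibit a Galois extension $N/\F_2(T)$ with group $G \simeq 8T15 = C_8 \rtimes V_4$, chosen so that $\F_4(T)$ is the unique quadratic subfield of the degree-$8$ subfields we will extract (equivalently, so that $\F_4(T)$ is the fixed field of a specific index-$2$ subgroup of $G$). Second, I would apply Lemma \ref{lemmaCohomologic}: since the $V_4$-quotient of $G$ is a non-cyclic $2$-group, there exists a cohomology class $\chi \in H^1(V_4, C_8)$ which is nontrivial but restricts trivially to every cyclic subgroup; twisting a section $\sigma : V_4 \hookrightarrow G$ by $\chi$ produces two complements $H = \sigma(V_4)$ and $H' = (\chi \cdot \sigma)(V_4)$ which are Gassmann equivalent but not conjugate in $G$, and which, by construction, both fix $\F_4(T)$. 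Third, I would compute minimal polynomials over $\F_2(T)$ of primitive elements of the fixed fields $K := N^H$ and $K' := N^{H'}$; the outputs are the two polynomials displayed in the statement of the theorem.

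Once the two polynomials are in hand, the remaining verifications are mechanical and can be carried out in a computer algebra system such as \textsc{pari/gp} or \textsc{magma}: irreducibility over $\F_2(T)$ is direct, separability follows from the presence of odd-degree monomials (so that the derivative is nonzero), and one checks that each polynomial factors as a product of two irreducible degree-$4$ polynomials over $\F_4(T)$, confirming that each defining field contains $\F_4(T)$. The Galois group of each polynomial is then verified to be $8T15$, and by comparing roots inside a fixed algebraic closure of $\F_2(T)$ one sees that the two Galois closures coincide. Arithmetic equivalence over $\F_2(T)$ is then immediate from the Gassmann equivalence of $H$ and $H'$ via the characterisation proved in Section 2, non-isomorphism over $\F_2(T)$ is immediate from the non-conjugacy of $H$ and $H'$ in $G$, and non-equivalence over $\F_4(T)$ is automatic from the Remark immediately preceding the theorem.

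The main obstacle is the very first step: realising $8T15$ as a Galois group over $\F_2(T)$ in such a way that the specific index-$2$ subgroup corresponding to $\F_4(T)$ contains the two Gassmann equivalent complements produced by the cocycle twist, rather than some other quadratic subfield. In particular, the classical trick of adjoining an eighth root of an element of $\F_2(T)$ fails for inseparability reasons, so the cyclic degree-$8$ extension $N/M$ over the biquadratic $M$ must be constructed by hand, for instance via Artin--Schreier--Witt theory applied to a length-$3$ Witt vector, and then descended to $\F_2(T)$ by a carefully chosen $V_4$-action. Once the splitting field is in place, the explicit polynomial computations reduce to linear algebra over $\F_2(T)$.
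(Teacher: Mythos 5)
Your overall strategy is the right one and matches the paper's in outline: adapt Perlis's $X^8-a$ versus $X^8-16a$ example, realise $8T15=C_8\rtimes V_4$ over $\F_2(T)$ with $\F_4(T)$ sitting inside the degree-$8$ fields, and produce the sibling by twisting a section by the cocycle of Lemma \ref{lemmaCohomologic}; the deductions of arithmetic equivalence, non-isomorphism, and non-equivalence over $\F_4(T)$ from the group theory are exactly as you say. But the proposal has a genuine gap: the entire content of the theorem is the explicit realisation, and you never carry it out. You write ``I would exhibit a Galois extension $N/\F_2(T)$ with group $8T15$'' and later declare that ``the outputs are the two polynomials displayed,'' while simultaneously conceding that this first step is ``the main obstacle.'' Nothing in the proposal shows that such an $N$ exists over $\F_2(T)$ with $\F_4(T)$ in the correct position, nor that the construction, once performed, yields the polynomials \eqref{polK} and \eqref{polL}. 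The route you sketch for the hard step --- build the cyclic degree-$8$ layer $N/M$ over the biquadratic field by Artin--Schreier--Witt theory with a length-$3$ Witt vector and then descend by a chosen $V_4$-action --- is not the paper's route, and you give no argument that the descent condition (that $\Gal(M/\F_2(T))\simeq V_4$ acts on the Witt class so as to realise all of $\Aut(C_8)\simeq(\Z/8\Z)^*$, which is what forces the semidirect product to be $8T15$ rather than a group with smaller image in $\Aut(C_8)$) can actually be satisfied, let alone satisfied compatibly with $\F_4(T)$ being the unique quadratic subfield of the complements' fixed fields.

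The paper proceeds in the opposite direction, which is what makes the computation tractable: it builds $K$ bottom-up as a tower of three ordinary Artin--Schreier quadratics $\F_2(T)\subset\F_4(T)\subset F=\F_4(T)(\alpha)\subset K=F(\beta)$ with $\alpha^2+\alpha=\zeta T$ and $\beta^2+\beta=(T^2+\zeta T)\alpha$, then spends most of the proof computing the Galois orbit of $\beta$, identifying an auxiliary degree-$8$ element $\gamma$ and the degree-$16$ field $M=\F_2(T)(\alpha,\alpha_T,\alpha_{T^3})$, proving $[N:\F_2(T)]=32$, and exhibiting an explicit order-$8$ automorphism $\varphi_1$ whose cyclic group is normal with $V_4$ complement $\Gal(N/K)$; the biquadratic field $B=\F_2(T)(\alpha_T+\zeta,\alpha_{T^3})$ fixed by the $C_8$ only emerges at the end, rather than being the starting point. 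The cocycle twist then produces the sibling concretely: $\beta+\alpha_{T^3}$ is shown to be fixed by the twisted complement, and multiplying out its conjugate quadratics gives \eqref{polL}. If you wanted to salvage your top-down plan, you would still owe the reader an explicit Witt vector, a verified descent datum, and the computation recovering the two stated polynomials; as written, every step after ``once the two polynomials are in hand'' is sound but rests on a construction that has not been performed.
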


\begin{proof}
Consider the following tower of quadratic extensions over $\F_2(T)$
$$
\begin{tikzcd}
&K& X^2+X+(T^2+\zeta T)\alpha& (\beta)\\
&F\arrow{u}& X^2+X+\zeta T& (\alpha)\\
&\F_{4}(T)\arrow{u}&X^2+X+1 &(\zeta)\\
&\F_2(T).\arrow{u}&
\end{tikzcd}
$$
where for every extension we indicate a defining polynomial and a fixed root of the polynomial. In particular, $\zeta$ is a fixed third root of unity in $\F_4$ and $F=\F_4(T)(\alpha)$, $K=F(\beta)$.
\\\\
\textit{1) The extensions above are really quadratic extensions.} This is equivalent to show that every polynomial above is irreducible over the corresponding base field: since they are Artin-Schreier polynomials of degree 2 for characteristic 2 fields, they are irreducible if and only if they have no roots, i.e. if and only if their degree 0 terms are not of the form $a^2+a$ with $a$ belonging to the ring of integers of the base field. This is immediate to see for the elements $1\in\F_2[T]$ and $\zeta T\in\F_4[T]$. For the remaining polynomial, notice that $\Of=\F_4[\alpha]$, since $\alpha$ is a transcendent element over $\F_4$ and $T=(\alpha^2+\alpha)/\zeta$, and so the polynomial ring $\F_4[\alpha]$ is the ring of integers of $F$ since it is integrally closed in its field of fractions $F$: but then 
$$(T^2+\zeta T)\alpha = \lp\frac{\alpha^4+\zeta\alpha^2+(\zeta+1)\alpha}{\zeta+1}\rp\alpha$$
and this element has not the form $a^2+a$ with $a\in\F_4[\alpha]$, since it has odd degree in $\alpha$.\\\\
\textit{2) The tower above is the subfield lattice of $K$.} Let us consider first the quartic extension $F/\F_2(T)$: its defining polynomial over $\F_2(T)$ is obtained multiplying its defining polynomial over $\F_4(T)$ with its conjugated over $\F_2(T)$, and so it is equal to
\begin{align*}
    (X^2+X+\zeta T)(X^2+X+(\zeta+1)T) = X^4+(T+1)X^2+TX+T^2.
\end{align*}
One can show then, by employing Galois Theory for quartic polynomials \cite{conrad2013galois}, that this polynomial generates a quartic extension with Galois group $D_4$, since its cubic resolvent is equal to 
$$X^3+(T+1)X^2+T^2,$$
which is reducible with a unique root, equal to $T$, while its quadratic resolvent is equal to 
$$X^2+T^2X+(T^5+T^3+T^2),$$ 
which is irreducible over $\F_2(T)$. Hence, the only non trivial subextension of $F$ is $\F_4(T)$.

A similar computation shows that the quartic extension $K/\F_4(T)$ has group $D_4$ too.  Hence $K$ contains only $F$ as non trivial extension of $\F_4(T)$, and this fact together with the previous one shows that $K$ is a field of degree 8 over $\F_2(T)$ with only $\F_4(T)$ and $F$ as subfields. Notice that from this we derive $F=\F_2(T)(\alpha)$ and $K=\F_2(T)(\beta)$.
\\\\
\textit{3) We describe explicitly the Galois closure of $F$.} Let us give some notation: we denote by $N$ the Galois closure of $K/\F_2(T)$ and by $L$ the Galois closure of $F/\F_2(T)$. We already know that $\Gal(L/\F_2(T))\simeq D_4$: in this part of the proof we want to explicitly describe $L$.

In order to do this, we need to describe completely the Galois orbit of $\alpha$: this coincides with the set of roots of the polynomial defining $F$, i.e. it is formed by the roots of the quadratic polynomial defining $\alpha$ and those of the conjugate polynomial. If $\alpha_T$ is a fixed root of $X^2+X+T$, which is irreducible over $\F_2(T)$, these roots are
$$
\{\alpha,\alpha+1,\alpha+\alpha_T,\alpha+\alpha_T+1\}.
$$
The first two numbers are the roots of $X^2+X+\zeta T$, while the remaining ones are the roots of the conjugate $X^2+X+(\zeta+1)T$: here and thereafter we will extensively use the property that, over a field with characteristic 2, if $r_1$ is a root of an Artin-Schreier polynomial $X^2+X+a_1$ and $r_2$ is a root of $X^2+X+a_2$, then $r_1+r_2$ is a root of $X^2+X+(a_1+a_2)$. Notice that choosing $\alpha_T+1$ instead of $\alpha_T$ does not change the description of the Galois orbit $\alpha$.

Hence we have $L=\F_2(T)(\alpha,\alpha_T)$ and the Galois group, isomorphic to $D_4$, is generated by the following automorphisms of $L$: 
$$
\tau:\begin{cases}
\alpha \to \alpha\\
\zeta\to\zeta\\
\alpha_T\to\alpha_T+1
\end{cases}
\sigma:\begin{cases}
\alpha\to\alpha+\alpha_T\\
\zeta\to\zeta+1\\
\alpha_T\to\alpha_T+1.
\end{cases}
$$
Notice that the images of these two automorphisms are coherent: if $\zeta$ is fixed, then the only possible choices for the image of $\alpha$ are either $\alpha$ or $\alpha+1$, while sending $\zeta$ to $\zeta+1$ forces $\alpha$ to have image equal to either $\alpha+\alpha_T$ or $\alpha+\alpha_T+1$. In both cases, we can decide indipendently the image of $\alpha_T$ since $F$ and $\F_2(T)(\alpha_T)$ are linearly disjoint over $\F_2(T)$.

The automorphism $\tau$ has order 2 and $L^{\langle\tau\rangle} = F$, while $\sigma$ has order 4 and $L^{\langle\sigma\rangle}=\F_2(T)(\alpha_T+\zeta)$. It is immediate to verify that $\tau\sigma\tau = \sigma^3$ and thus looking at the fixed fields $L^H$ for every subgroup $H$ of $\langle\sigma,\tau\rangle$ we obtain the following subfield lattice for $L$.
$$\hspace{-2.5cm} 
  \begin{tikzcd}
    & & &L & &\\
    &F=\F_2(T)(\alpha)\arrow{rru} &F'=\F_2(T)(\alpha+\alpha_T)\arrow{ru} & \F_2(T)(\zeta,\alpha_T)\arrow{u} & \F_2(T)(\alpha+\zeta\alpha_T)\arrow{lu} & \F_2(T)(\alpha+\zeta(\alpha_T+1))\arrow{llu}\\
    & &\F_4(T) = \F_2(T)(\zeta)\arrow{lu}\arrow{u}\arrow{ru} & \F_2(T)(\zeta+\alpha_T)\arrow{u} & \F_2(T)(\alpha_T)\arrow{lu}\arrow{u}\arrow{ru} &\\
    & & &\F_2(T)\arrow{lu}\arrow{u}\arrow{ru} & &
    \end{tikzcd}
$$
\noindent
\textit{4) We use the Galois structure of $L$ to compute the Galois orbit of $\beta$.} The defining polynomial of $K/F$, i.e. the polynomial with roots $\beta$ and $\beta+1$, has coefficients in $F$ and hence it is fixed by $\langle\tau\rangle$: in order to describe the conjugates of the polynomial over $\F_2(T)$, it is thus necessary and sufficient to describe its image via the morphisms in the group $\langle\sigma\rangle$, and thus to vary $\alpha$ in its Galois orbit and $\zeta$ according to it. The 4 conjugated polynomials are then
$$X^2+X+(T^2+\zeta T)\alpha,$$
$$X^2+X+(T^2+\zeta T)(\alpha+1),$$
$$X^2+X+(T^2+(\zeta +1)T)(\alpha+\alpha_T),$$
$$X^2+X+(T^2+(\zeta +1) T)(\alpha +\alpha_T+1).$$
The product of these 4 polynomials gives exactly the polynomial \eqref{polK} (this, as other products of polynomials in this proof, can be verified in PARI/GP \cite{pari}). We notice that the first two polynomials define quadratic extensions of $F$, while the remaining two give quadratic extensions of $F'$. By using the same trick on Artin-Schreier polynomials employed before, we can explicitly describe the roots of these polynomials (and thus the Galois orbit of $\beta$) which are
\begin{align*}
    \{&\beta,\hspace{0.1cm}\beta+1,\hspace{0.1cm} \beta+\alpha+\alpha_T+T,\hspace{0.1cm}\beta+\alpha+\alpha_T+T+1,\\
    &\beta+\gamma,\hspace{0.1cm}\beta+\gamma+1,\hspace{0.1cm}\beta+\gamma+\alpha+T,\hspace{0.1cm}\beta+\gamma+\alpha+T+1\}
\end{align*}
where $\gamma$ is a fixed root of the polynomial
\begin{align}\label{polGamma}
    X^2+X+\alpha T + (T^2+(\zeta+1)T)\alpha_T
\end{align}
which has coefficients is the field $ \F_2(T)(\alpha+\zeta\alpha_T)\subset L$. Notice that in the description of the orbit of $\beta$ we needed the root of $X^2+X+T^2$ at some point; since we are working over fields with characteristic 2, one can verify that $\alpha_T+T$ is a root of this polynomial.

We have $\gamma\in N$ by definition of Galois closure, and thus $M\subset N$ where $M$ is the Galois closure of the field $K_{\gamma}\coloneqq\F_2(T)(\alpha+\zeta\alpha_T, \gamma)$ (we shall verify in the next lines that actually $K_{\gamma}=\F_2(T)(\gamma)$). Understanding the structure of $M$ and detecting the conjugates of $\gamma$ are crucial steps for the proof.\\\\
\textit{5) We study the Galois orbit of $\gamma$ and the Galois structure of $M$.}
The element $\gamma$ has defining polynomial in $\F_2(T)(\alpha+\zeta\alpha_T) = L^{\langle\sigma\tau\rangle}$: thus the conjugates of the polynomial \eqref{polGamma} are obtained applying to it representatives of the left cosets of $\langle\sigma\tau\rangle$ in $\langle\sigma,\tau\rangle$, and we obtain the 4 polynomials
$$X^2+X+\alpha T + (T^2+(\zeta+1)T)\alpha_T,$$
$$X^2+X+\alpha T + (T^2+(\zeta+1)T)\alpha_T+T,$$
$$X^2+X+\alpha T + (T^2+(\zeta+1)T)\alpha_T+(T^2+(\zeta+1)T),$$
$$X^2+X+\alpha T + (T^2+(\zeta+1)T)\alpha_T+(T^2+\zeta T).$$
The element $\gamma$ thus has degree 8 over $\F_2(T)$, so that $K_{\gamma}=\F_2(T)(\gamma)$, and its Galois orbit is equal to
\begin{align*}
    \{&\gamma,\hspace{0.1cm}\gamma+1,\hspace{0.1cm} \gamma+\alpha_T,\hspace{0.1cm}\gamma+\alpha_T+1,\\
    &\gamma+\alpha+T,\hspace{0.1cm}\gamma+\alpha+T+1,\hspace{0.1cm}\gamma+\alpha_T+\alpha+T,\hspace{0.1cm}\gamma+\alpha_T+\alpha+T+1\}.
\end{align*}
Notice that $\alpha_T\in \F_2(T)(\alpha+\zeta\alpha_T)$ and thus $\gamma+\alpha_T$ actually belongs to $K_{\gamma}$, so that this field only has a single different conjugated field and its Galois closure is $M=K_{\gamma}(\alpha)$.

The product of the 4 polynomials above gives a defining polynomial for $K_{\gamma}$ over $\F_2(T)$, equal to

\begin{equation}\label{polgamma}
    X^8 + (T^4+T^3+1)X^4+T^5X^2+(T^5+T^4+T^3)X+(T^{10}+T^8+T^5).
\end{equation}
We can compute the Galois group of $K_{\gamma}$ by giving this polynomial as input in Magma \cite{cannon2011handbook}: the group of $K_{\gamma}$ results to be isomorphic to $D_4\times C_2$, so that $M$ is a quadratic extension of both $K_{\gamma}$ and $L$ and it has degree 16 over $\F_2(T)$. We also determine all quadratic subfields of $K_{\gamma}$ via Magma, and we find there are 3 of them, generated respectively by the elements $\alpha_T, \alpha_{T^3}$ and $\alpha_T+\alpha_{T^3}$, where $\alpha_{T^3}$ is a chosen root of $X^2+X+T^3$. Since $L=\F_2(T)(\alpha,\alpha_T)$, we have $M=L(\alpha_{T^3})=\F_2(T)(\alpha,\alpha_T,\alpha_{T^3})=\F_2(T)(\alpha,\gamma)$: an explicit description of the group of $M$ is
$$\langle\sigma,\tau,\omega | \sigma^4=\tau^2=\omega^2=1, \sigma\omega=\omega\sigma, \tau\omega=\omega\tau, \tau\sigma\tau=\sigma^3\rangle$$
where
$$
\tau:\begin{cases}
\alpha \to \alpha\\
\zeta\to\zeta\\
\alpha_T\to\alpha_T+1\\
\alpha_{T^3}\to\alpha_{T^3}
\end{cases}
\sigma:\begin{cases}
\alpha\to\alpha+\alpha_T\\
\zeta\to\zeta+1\\
\alpha_T\to\alpha_T+1\\
\alpha_{T^3}\to\alpha_{T^3}
\end{cases}
\omega:\begin{cases}
\alpha\to\alpha\\
\zeta\to\zeta\\
\alpha_T\to\alpha_T\\
\alpha_{T^3}\to\alpha_{T^3}+1.
\end{cases}
$$
\\\\
\textit{6) We show that $N=\F_2(T)(\beta,\alpha_T,\alpha_{T^3})$ has degree 32.} From the description of the Galois orbit of $\beta$ we already know that $N=\F_2(T)(\beta,\gamma).$ We want to prove that $N/M$ is a quadratic extension: in order to do this, it is enough to show that $K\not\subset M$, since in this case $N=KM$, $F=K\cap M$ and
$$
16 < 16a= [KM:\F_2(T)] \leq \frac{[K:\F_2(T)][M:\F_2(T)]}{[K\cap M:\F_2(T)]} = 32,
$$
forcing $[KM:\F_2(T)]=32$.

Now, if $K\subset M$, then $K=M^{\langle g\rangle}$ for some $g\in\Gal(M/\F_2(T))$ of order 2: since $g$ fixes $K$, it must also fix $\alpha$ and $\zeta$. The only  automorphisms $g$ with this property are $\tau, \omega$ and $\tau\omega$: however, if $g=\tau$, then $\alpha_{T^3}\in K$ since $\tau$ fixes this element, and this is not possible because we know that the only quadratic subfield of $K$ is $\F_4(T)$. Similarly for the other two choices: we would have $\alpha_{T^3}\in K$ or $\alpha_{T}+\alpha_{T^3}\in K$, which is impossible.
\\\\
\textit{7) We prove that $\Gal(N/\F_2(T))\simeq C_8\rtimes V_4$.} Since $N=\F_2(T)(\beta,\alpha_T,\alpha_{T^3})$, every automorphism of $N$ is uniquely defined by the images of the elements $\beta,\alpha_T$ and $\alpha_{T^3}$: these images can be chosen independently from each other since we obtained that these are linearly disjoint over $\F_2(T)$ as necessary step for the description of $N$. The choice of these three values will give immediately the images for $\alpha, \zeta$ and $\gamma$.

We explicitly describe the Galois group of $N$ by considering a suitable biquadratic subextension. Let $B\coloneqq \F_2(T)(\alpha_T+\zeta,\alpha_{T^3})$: we have $K\cap B=\F_2(T)$ and  $N=KB$ (the choice of $\alpha_T+\zeta$ instead of $\alpha_T$ will be justified in the next lines). Consider the automorphism of $N$ defined by the choice
$$
\varphi_1:\begin{cases}
\beta \to \beta +\gamma +\alpha + T\\
\alpha_T+\zeta \to \alpha_T+\zeta\\
\alpha_{T^3}\to\alpha_{T^3}.
\end{cases}
$$
Then $\varphi_1\in\Gal(N/B)$, which is a group of order 8, and these conditions uniquely define the images of $\alpha_T,\zeta,\alpha$ and $\gamma$ via $\varphi_1$: in fact, we have
$$
\varphi_1:\begin{cases}
\beta \to \beta +\gamma +\alpha + T\\
\alpha\to\alpha+\alpha_T+1\\
\zeta\to\zeta+1\\
\alpha_{T^3}\to\alpha_{T^3}\\
\alpha_T\to\alpha_T+1\\
\gamma\to\gamma+\alpha+T+\delta
\end{cases}
$$
where $\delta\in\{0,1\}$. The images of $\alpha$ and $\zeta$ derive from the choice of the image of $\beta$ (which means that we have chosen the last of the conjugates of its polynomial) and the image of $\alpha_T$ follows in order to respect the constraint on $\alpha_T+\zeta$. For what concerns $\gamma$, the choice for $\alpha, \zeta$ and $\alpha_T$ gives one among $\gamma+\alpha+T$ and $\gamma+\alpha+T+1$, and the choice of $\alpha_{T^3}$ uniquely defines one of these. Since we are not able to explicitly describe this choice, we say that the image of $\gamma$ is $\gamma+\alpha+T+\delta$, with $\delta$ uniquely determined by $\alpha_{T^3}\to\alpha_{T^3}$.

For $m\in\Z$, we denote $\varphi_m\coloneqq\underbrace{\varphi_1\circ\cdots\circ\varphi_1}_m$. We have that the orbit of $\beta$ via $\varphi_1$ is cyclic of order 8, since
$$
\begin{matrix}
    &\varphi_1(\beta) = \beta+\gamma+\alpha+T,& \varphi_2(\beta)=\beta+\alpha+\alpha_T+T+\delta+1,\\
    &\varphi_3(\beta)=\beta+\gamma+1+\delta,& \varphi_4(\beta)=\beta+1,\\
    &\varphi_5(\beta) = \beta+\gamma+\alpha+T+1, & \varphi_6(\beta)=\beta+\alpha+\alpha_T+T+\delta,\\
    &\varphi_7(\beta)=\beta+\gamma+\delta, &\varphi_8(\beta)=\beta.
\end{matrix}
$$
Similarly, one can verify that the orbits of $\gamma$ and $\alpha$ are cyclic of order 4, while for the remaining elements they are of order 2 or 1. Thus $\varphi_1$ is an automorphism of order 8, and so $A\coloneqq\Gal(N/B)=\langle\varphi_1\rangle$ is cyclic of order 8. 

Now, we consider the group $H\coloneqq\Gal(N/K)$ of order 4. Every automorphism in this group fixes $\beta,\alpha$ and $\zeta$ and it is uniquely determined by the images of $\alpha_T$ and $\alpha_{T^3}$, so that it is immediate to see that $H\simeq V_4$. We give now an explicit description of the non trivial elements of $H$ (the identity element will be denoted as $\psi_1$).

$$
\psi_3:\begin{cases}
\alpha_T\to\alpha_T+1,\\
\alpha_{T^3}\to\alpha_{T^3}+1\\
\beta\to\beta\\
\alpha\to\alpha\\
\zeta\to\zeta\\
\gamma\to\gamma+\alpha+T+\delta+1
\end{cases}
\psi_5:\begin{cases}
\alpha_T\to\alpha_T,\\
\alpha_{T^3}\to\alpha_{T^3}+1\\
\beta\to\beta\\
\alpha\to\alpha\\
\zeta\to\zeta\\
\gamma\to\gamma+1
\end{cases}
\psi_7:\begin{cases}
\alpha_T\to\alpha_T+1,\\
\alpha_{T^3}\to\alpha_{T^3}\\
\beta\to\beta\\
\alpha\to\alpha\\
\zeta\to\zeta\\
\gamma\to\gamma+\alpha+T+\delta
\end{cases}
$$
Notice that the choice of the image of $\gamma$ via $\psi_7$ is coherent with the image of $\gamma$ via $\varphi_1$, since the images of $\alpha_T, \alpha$ and $\zeta$ detect the same conjugate of the polynomial \eqref{polgamma} and the constraint on $\alpha_{T^3}$ must give the same image.

One can verify that these automorphisms satisfy the relation $\psi_l \varphi_1 \psi_l = \varphi_l$ and so we have a group morphism
\begin{align*}
    &H \rightarrow \text{Aut}(A)\\
    &\psi_l \to (\varphi_1\to\varphi_l)
\end{align*}
which is exactly the semidirect product structure $(\Z/8\Z)\rtimes (\Z/8\Z)^*$ given by the isomorphism $(\Z/8\Z)^* \simeq \text{Aut}(\Z/8\Z)$. Hence, we have obtained that $\Gal(N/\F_2(T)) = A\rtimes H$ is isomorphic to the group 8T15. This proves that $K$ has an arithmetically equivalent sibling contained in $N$, since $N$ contains 19 subfields of degree 8 among which 11 of them are either Galois of degree 8 or with Galois closure of degree 16, and the remaining 8 ones form 2 isomorphism classes, each one containing 4 fields, and every field in the first class is equivalent to a field in the second class (these properties can be recovered by a number field example: compute in PARI/GP the Galois closure of the number field given by $X^8-3$ and then compute its subfields of degree 8 and their Galois groups via suitable PARI/GP commands).\\\\
\textit{8) We explicitly detect the sibling $K'$ of $K$.} Let $G\coloneqq \Gal(N/\F_2(T))$: in the previous lines we showed that $G=A\rtimes H$ with $A=\Gal(N/B)$ and $H=\Gal(N/K)$. Let $\sigma: H\to G$ be the section of $H$ corresponding to $K$, i.e. such that $N^{\sigma(H)}=K$: we will find the arithmetically equivalent sibling of $K$ using the same procedure employed by Perlis, 
i.e. the one described in Lemma \ref{lemmaCohomologic}. We need to find a non-trivial element $[\chi]$ of the cohomology group $H^1(H,A)$ such that for every $h\in H$ its restriction to the cohomology groups $H^1(\langle h\rangle,A)$ is trivial.

Let $\chi: H \to A$ be the function defined as 
$$
\begin{matrix}
    &\chi(\psi_1) = id =\chi(\psi_7),\\
    &\chi(\psi_3) = \varphi_4 = \chi(\psi_5).
\end{matrix}
$$
It is easy to verify that this is a 1-cocycle, i.e. it satisfies the relation $\chi(\sigma\tau) = \chi(\sigma)^{\tau}\chi(\tau)$ for every $\sigma,\tau\in H$ (where $\varphi_m^{\psi_l} = \varphi_{lm}$). At the same time, one can verify that $\chi$ is not a 1-coboundary, i.e. there is not $\varphi_m\in A$ such that $\chi(\sigma) = \varphi_m^{\sigma}\cdot\varphi_{-m}$. Hence $[\chi]\in H^1(H,A)$ is non-trivial: however, every time $\chi$ is restricted to a subgroup $\langle h\rangle$ with $h\in H$, its restriction becomes a 1-coboundary (we have $\varphi_m=id$ for $h\in\{\psi_1,\psi_7\}$, $\varphi_m=\varphi_2$ for $h=\psi_3$ and $\varphi_m=\varphi_3$ for $h=\psi_5$) so that $[\chi]=[0]$ in every $H^1(\langle h\rangle,A)$. By Lemma \ref{lemmaCohomologic}, the section $(\chi\cdot\sigma): H\to G$ gives a group which is Gassmann equivalent to $\sigma(H)$ but not conjugated to it in $G$.

We show that $\beta+\alpha_{T^3}\in N$ is fixed by the group $(\chi\cdot\sigma)(H)$. For every $h\in H$, we have
\begin{align*}
    (\chi\cdot \sigma)(h)(\beta+\alpha_{T^3}) = \chi(h)(\sigma(h)(\beta+\alpha_{T^3})) = \chi(h)(\beta+\sigma(h)(\alpha_{T^3})) 
\end{align*}
because $\sigma(h)(\beta)=\beta$ by definition. Now, if $h=\psi_1$ or $\psi_7$, we have 
\begin{align*}
    \chi(h)(\beta+\sigma(h)(\alpha_{T^3})) = id(\beta+\alpha_{T^3})=\beta+\alpha_{T^3}
\end{align*}
while, if $h=\psi_3$ or $\psi_5$, we have
\begin{align*}
    \chi(h)(\beta+\sigma(h)(\alpha_{T^3})) = \varphi_4(\beta+\alpha_{T^3}+1)=(\beta+1)+(\alpha_{T^3}+1)=\beta+\alpha_{T^3}.
\end{align*}
We finally show that $\beta+\alpha_{T^3}$ is an element of order 8: this is a root of the polynomial\\
$X^2+X+(T^2+\zeta T)\alpha+T^3$, for which we have the tower of fields
$$
\begin{tikzcd}
&K'& X^2+X+(T^2+\zeta T)\alpha+T^3& (\beta+\alpha_{T^3})\\
&F\arrow{u}& X^2+X+\zeta T& (\alpha)\\
&\F_{4}(T)\arrow{u}&X^2+X+1 &(\zeta)\\
&\F_2(T).\arrow{u}&
\end{tikzcd}
$$
For this tower we can verify all the properties already proved for $K$: in particular, the tower above coincides with the subfield lattice of $K'$, which is a field of degree 8 over $\F_2(T)$, and $K'=\F_2(T)(\beta+\alpha_{T^3})$. Moreover, computing all the conjugates of its polynomial and multiplying them we obtain its defining polynomial over $K$, which is exactly \eqref{polL}.
\end{proof}

Following the very same sketch of this proof, it is actually possible to describe infinite couples of non-geometrically equivalent siblings.

\begin{thm}
Let $p\geq 2$ prime. The two  extensions $K(p)$ and $K'(p)$ of $\F_2(T)$ given by the polynomials
\begin{align}\label{polKp}
    X^8 &+ TX^6 + TX^5 +(T^{2p} + T^{p+2} + T^{p+1} + T^5 + T^4 + T^3 + T^2 + T+1)X^4 + TX^3 \nonumber\\
    &+ (T^{2p+2}+T^{2p+1}+T^{2p}+T^{p+4}+T^{p+3}+T^{p+1}+T^7 + T^5  + T^3 + T^2 )X^2 \nonumber\\&+ (T^{2p+2}+T^{2p+1}+T^{p+4}+T^{p+3}+T^{p+2}+T^7 + T^4)X \nonumber\\&+ (T^{4p}+T^{3p+2}+T^{2p+5}+T^{2p+4}+T^{2p+2}+T^{p+6}+T^{p+5}+T^{p+4}+T^{10} + T^8 +T^6)
\end{align}
and
\begin{align}\label{polLp}
    X^8 &+ TX^6 + TX^5 + (T^{2p}+T^{p+2}+T^{p+1}+T^6 + T^3 + T^2 + T + 1)X^4 + TX^3 \nonumber\\
    &+ (T^{2p+2}+T^{2p+1}+T^{2p}+T^{p+4}+T^{p+3}+T^{p+1}+T^8 + T^7 + T^5 + T^4 + T^3 + T^2)X^2 \nonumber\\&+ (T^{2p+2}+T^{2p+1}+T^{p+4}+T^{p+3}+T^{p+2}+T^8 + T^7 + T^6 + T^5 + T^4 )X \nonumber\\&+ (T^{4p}+T^{3p+2}+T^{2p+4}+T^{2p+2}+T^{p+8}+T^{p+6}+T^{p+5}+T^{p+4}+T^{12} + T^9 + T^8 + T^7 + T^6)\nonumber\\
    &
\end{align}
are non-geometrically arithmetically equivalent siblings over $\F_2(T)$ with group 8T15 such that they are not equivalent over $\F_4(T)$. Moreover, $K(2)\simeq K$ and $K'(2)\simeq K'$; $K(3)\simeq K'$ and $K'(3)\simeq K$; if $p\neq q$ and $p\geq 5$, then $K(p)$ is not equivalent to $K(q)$.
\end{thm}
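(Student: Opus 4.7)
The plan is to mirror the eight-step construction of the previous theorem, replacing the Artin-Schreier element $\alpha_{T^3}$ (a root of $X^2+X+T^3$) throughout by $\alpha_{T^p}$ (a root of $X^2+X+T^p$). The tower $\F_2(T) \subset \F_4(T) \subset F \subset K$ used to define $\beta$ does not depend on $p$, so the only role of $p$ is in the auxiliary extension governing the Galois closure $N(p) := \F_2(T)(\beta,\alpha_T,\alpha_{T^p})$ and in the cohomological twist that produces the sibling $K'(p) = \F_2(T)(\beta + \alpha_{T^p})$.

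First I would verify, for every odd prime $p$, that $X^2+X+T^p$ is irreducible over $\F_2(T)(\alpha_T)$. Writing $u = a + b\alpha_T$ with $a,b \in \F_2(T)$ and using $\alpha_T^2 = \alpha_T + T$, the equation $u^2+u = T^p$ forces $b^2+b = 0$, leaving $a^2+a+b^2 T = T^p$, which has no solution in $\F_2(T)$ by a parity-of-degree argument. Consequently $M(p) := L(\alpha_{T^p})$ is a genuine quadratic extension of $L = \F_2(T)(\alpha, \alpha_T)$, with Galois group $D_4 \times C_2$ over $\F_2(T)$; replicating the fixed-field argument of Step 6 of the previous proof shows $K \not\subset M(p)$, hence $[N(p):\F_2(T)] = 32$. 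Steps 4 through 7 then go through verbatim with $T^3 \mapsto T^p$, yielding $\Gal(N(p)/\F_2(T)) \simeq C_8 \rtimes V_4$; Lemma \ref{lemmaCohomologic} applied to the same 1-cocycle $\chi$ produces the sibling $K'(p) = \F_2(T)(\beta + \alpha_{T^p})$, and multiplying the four conjugates over $\F_2(T)$ of the Artin-Schreier polynomials of $\beta$ and $\beta + \alpha_{T^p}$ over $F$ yields exactly \eqref{polKp} and \eqref{polLp}.

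For $p = 3$ the construction literally reproduces the pair of the previous theorem, whence $K(3) \simeq K'$ and $K'(3) \simeq K$ with the roles of the two sections of the cohomological twist exchanged. For $p = 2$ the identity $(\alpha_T + T)^2 + (\alpha_T + T) = T^2$ gives $\alpha_{T^2} \in \F_2(T)(\alpha_T)$, so the naive construction degenerates; a direct computation of the Galois group of \eqref{polKp} evaluated at $p = 2$ (for instance via Magma, as in the previous theorem) confirms that it is still 8T15, and comparison of the associated quartic resolvents over $\F_4(T)$ exhibits the explicit isomorphism with $K$; similarly for $K'(2) \simeq K'$.

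The main obstacle is the non-equivalence of $K(p)$ and $K(q)$ for distinct primes with $p \geq 5$. Here I would invoke the Corollary following Proposition \ref{propWeil}: arithmetically equivalent extensions share the same degree of their different over $\F_2(T)$ (equivalently, the same genus). The only place of $\F_2(T)$ contributing non-trivially to the different of $K(p)/\F_2(T)$ is the place at infinity, because each Artin-Schreier generator $\alpha$, $\alpha_T$, $\alpha_{T^p}$ has only polynomial poles. The higher ramification filtration of $\F_2(T)(\alpha_{T^p})/\F_2(T)$ at infinity has wild different exponent $p+1$ for odd $p$, by standard Artin-Schreier theory in characteristic 2. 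Transferring this wild ramification from $N(p)$ down to $K(p)$ via the explicit description of $\Gal(N(p)/K(p)) \simeq V_4$ within $C_8 \rtimes V_4$, one obtains $\deg \mathrm{Diff}(K(p)/\F_2(T))$ as a strictly increasing function of $p$ for $p \geq 5$. Distinct primes therefore give distinct different degrees, so $K(p)$ cannot be arithmetically equivalent to $K(q)$.
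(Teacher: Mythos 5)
Your proposal misidentifies the fields, and this breaks the whole argument. You take $K(p)=\F_2(T)(\beta)=K$ for every $p$ and let only the sibling vary, setting $K'(p)=\F_2(T)(\beta+\alpha_{T^p})$ with closure $N(p)=\F_2(T)(\beta,\alpha_T,\alpha_{T^p})$. In the paper the pair is $K(p)=\F_2(T)(\beta+\alpha_{T^p})$ and $K'(p)=\F_2(T)(\beta+\alpha_{T^p}+\alpha_{T^3})$: one twists the Artin--Schreier equation of $\beta$ over $F$ by $T^p$ to get $K(p)$, and the cohomological twist producing the sibling is still by $\alpha_{T^3}$, not by $\alpha_{T^p}$. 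Your reading is incompatible with the statement itself: the polynomial \eqref{polKp} depends on $p$ (its constant term is $T^{4p}+\cdots$), so it cannot be the minimal polynomial of $\beta$, whose minimal polynomial is the $p$-independent \eqref{polK}; and under your reading $K(3)$ would equal $K$ rather than $K'$. Worse, your pair $\bigl(K,\F_2(T)(\beta+\alpha_{T^p})\bigr)$ is \emph{not} arithmetically equivalent for $p\geq 5$: by Proposition \ref{propGroupProperties} equivalent fields share their Galois closure, but the closure of $K$ is $\F_2(T)(\beta,\alpha_T,\alpha_{T^3})$ while that of $\F_2(T)(\beta+\alpha_{T^p})$ is $\F_2(T)(\beta+\alpha_{T^p},\alpha_T,\alpha_{T^3})$, and these differ (the latter cannot contain $\beta$, else it would acquire $\F_2(T)(\alpha_{T^p})$ as an eighth quadratic subfield, impossible for a group with seven index-two subgroups). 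The key point you miss — and the crux of the paper's proof — is that the Galois orbit of $\beta+\alpha_{T^p}$ has the \emph{same} difference set as that of $\beta$, so the closure of $K(p)$ contains the same field $K_\gamma$ and the same degree-$16$ field $M=L(\alpha_{T^3})$ for every $p$; your replacement $M(p)=L(\alpha_{T^p})$ is not the relevant object, and "Steps 4 through 7 go through verbatim with $T^3\mapsto T^p$" is false because $\gamma$ and its quadratic subfields $\alpha_T,\alpha_{T^3},\alpha_T+\alpha_{T^3}$ are forced by $\beta$, independently of $p$.

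Two further points. For $p=2$ and $p=3$ the paper's identifications are immediate from the correct primitive elements ($\alpha_{T^2}=\alpha_T+T$ puts $\beta+\alpha_{T^2}$ in a conjugate of $K$; $\beta+\alpha_{T^3}$ is literally the primitive element of $K'$), whereas your appeal to a Magma computation and quartic resolvents is both unnecessary and unsubstantiated. For the non-equivalence of $K(p)$ and $K(q)$, your idea of separating them by the degree of the different at infinity is a legitimate alternative in principle, but you never compute $\deg\mathrm{Diff}(K(p)/\F_2(T))$ — asserting it is "strictly increasing in $p$" after "transferring the wild ramification" is exactly the nontrivial step — and the paper's argument is far simpler: if $K(p)$ and $K(q)$ were equivalent they would share the closure $N(q)$, which would then contain $\alpha_{T^p}+\alpha_{T^q}$ and hence an eighth quadratic subfield beyond the seven generated by $\zeta$, $\alpha_T$, $\alpha_{T^3}$ and their sums, a contradiction.
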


\begin{proof}
The field $K(p)$ is the field of degree 8 over $\F_2(T)$ with primitive element $\beta+\alpha_{T^p}$, where $\alpha_{T^p}$ is a fixed root of $X^2+X+T^p$, and is a root of the polynomial $X^2+X+(T^2+\zeta T)\alpha+T^p$. Similarly, the field $K'(p)$ has degree 8 over $\F_2(T)$ and primitive element $\beta+\alpha_{T^p}+\alpha_{T^3}$ which is a root of  $X^2+X+(T^2+\zeta T)\alpha+T^p+T^3$.

For every value of $p$, the proof that $K(p)$ and $K'(p)$ are non-geometric arithmetically equivalent siblings is exactly the same as for $K$ and $K'$: the crucial key is that their Galois closure $N(p)$ contains the very same field $K_{\gamma}$ described in the previous proof, so that the Galois orbit of $\beta+\alpha_{T^p}$ is obtained by the one of $\beta$ by replacing $\beta$ with $\beta+\alpha_{T^p}$. In particular, $N(p)=\F_2(T)(\beta+\alpha_{T^p},\alpha_T,\alpha_{T^3})$ and contains the same Galois field $M=\F_2(T)(\alpha,\alpha_T,\alpha_{T^3})$ of degree 16.

If $p=2$, then $\beta+\alpha_{T^2} = \beta+\alpha_T+T$ is exactly in the same conjugate field of $K$ containing $\beta+\alpha+\alpha_T+T$, and so $K(2)\simeq K$ (similarly for $K'(2)$). 

If $p=3$, then the primitive element of $K(3)$ is exactly $\beta+\alpha_{T^3}$, i.e. the primitive element of $K'$, and the primitive element of $K'(3)$ is exactly $\beta$, so that $K(3)=K'$ and $K'(3)=K$. 

If $p\neq q$ and $p\geq 5$ we cannot have $\beta+\alpha_{T^p}\in N(q)$ because otherwise $\alpha_{T^p}+\alpha_{T^q}\in N(q)$, which is not possible: the Galois closure $N(q)$ of $K(q)$ contains 7 quadratic subfields generated by $\zeta, \alpha_T, \alpha_{T^3}$ and sums of these elements. This shows that $K(p)$ and $K(q)$ are not equivalent.  
\end{proof}

We conclude this section by showing that every couple $(K(p),K'(p))$ is arithmetically equivalent with no exceptions, and so they share the same lifted Goss zeta function. First, we need the following lemma.

\begin{lem}
Let $K/F$ be a quadratic extension of global function fields over $\F_2(T)$. Then $K/F$ is unramified at every finite place.
\end{lem}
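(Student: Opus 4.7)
My plan is to exploit characteristic $2$: since $K/F$ is separable of degree $2=p$, it is automatically an Artin--Schreier extension. Concretely, I would write $K = F(\beta)$ with $\beta^2 + \beta = a$ for some $a \in F$. The element $a$ is only well-defined modulo the image of the operator $\wp(y) = y^2 + y$, since replacing $\beta$ by $\beta + y$ changes $a$ to $a + y^2 + y$ without altering $K$. So the whole question becomes whether the class $[a] \in F/\wp(F)$ admits representatives with prescribed local behavior.

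Next, I would fix a finite place $P$ of $F$, with valuation $v_P$ and local ring $\mathcal{O}_{F,P}$, and select a representative $a$ of $[a]$ minimizing $v_P(a)$. The standard local Artin--Schreier reduction (working in the completion $F_P$) proceeds by iteratively killing negative even-order poles: if $v_P(a) = -2m < 0$, the leading coefficient of $a$ is a square in the perfect residue field $k(P)$, so choosing $y \in F_P$ with $v_P(y) = -m$ appropriately gives $v_P(a - \wp(y)) > -2m$. Iterating, one reaches a representative with $v_P(a)$ either nonnegative or negative and coprime to $p = 2$ (i.e., odd). Assuming the former, the polynomial $X^2+X+a$ has coefficients in $\mathcal{O}_{F,P}$ and discriminant $(\beta - (\beta+1))^2 = 1$, a unit; therefore $\mathcal{O}_{F,P}[\beta]$ is \'etale over $\mathcal{O}_{F,P}$ and equals the integral closure, so $P$ is unramified in $K/F$.

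The main step I expect to be delicate is showing that the reduction always terminates in the nonnegative case, ruling out a negative odd valuation (which would correspond to wild total ramification with conductor exponent $1 - v_P(a)$). In the context of the paper this is built into the construction: each quadratic step of the tower $\F_2(T) \subset \F_4(T) \subset F \subset K(p)$ is presented by an explicit Artin--Schreier generator that is visibly integral over $\F_2[T]$ (namely $\zeta^2+\zeta=1$, then $\alpha^2+\alpha=\zeta T$, then $\beta^2+\beta=(T^2+\zeta T)\alpha$ or the shifted variant $\beta^2+\beta=(T^2+\zeta T)\alpha+T^p$). Each such generator has $v_P \geq 0$ at every finite place $P$ of the corresponding base by inspection, so the \'etaleness criterion of the previous paragraph applies directly and yields unramifiedness at every finite place. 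Combining the three Artin--Schreier steps via the multiplicativity of ramification indices in a tower then extends the conclusion to every quadratic subextension of $N(p)/\F_2(T)$.
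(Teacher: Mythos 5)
Your local argument is correct as far as it goes, and it is genuinely different in mechanism from the paper's: the paper argues globally, using a trace computation to show that $\Ok=\Of[\alpha]$ for an Artin--Schreier generator $\alpha$ of $K/F$, and then reads off that the different is the ideal generated by $g'(\alpha)=1$, hence trivial; you instead work place by place and invoke the unit discriminant of $X^2+X+a$ to get \'etaleness of $\mathcal{O}_{F,P}[\beta]$ over $\mathcal{O}_{F,P}$. Both routes ultimately rest on the same characteristic-$2$ fact that the derivative of an Artin--Schreier polynomial is $1$, but yours is the more standard local-conductor analysis while the paper's is a one-line global different computation.

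The step you flag as delicate is in fact the crux, and your caution is justified: as literally stated the lemma is false. Take $F=\F_2(T)$ and $K=F(y)$ with $y^2+y=1/T$; no substitution $y\mapsto y+z$ can make the right-hand side integral at $T=0$, since the pole order $1$ is odd, and that place is totally and wildly ramified in $K/F$ with different exponent $2$. So the negative-odd-valuation case you worry about genuinely occurs for general quadratic extensions, and neither your argument nor the paper's rules it out without an extra hypothesis. The paper's proof uses that hypothesis silently: the deduction that ``$a=\beta-b\alpha$ is integral'' requires $\alpha\in\Ok$, i.e.\ that the Artin--Schreier constant $c$ lies in $\Of$. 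The correct statement should assume that the Artin--Schreier class of $K/F$ admits a representative integral at all finite places; this holds by inspection for every quadratic step of the towers the paper actually constructs ($\zeta^2+\zeta=1$, $\alpha^2+\alpha=\zeta T$, $\beta^2+\beta=(T^2+\zeta T)\alpha+\cdots$), so your fallback to the explicit generators is exactly the right repair and recovers everything the paper needs, including the corollaries on towers and on $N(p)$.
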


\begin{proof}
First of all, since $K=F(\alpha)$ with $\alpha$ being a root of an Artin-Schreier polynomial $g(X)=X^2+X+c$ with $c\in F$, we show that $\Ok=\Of[\alpha]$. Let $\beta\in \Ok$: then $\beta=a+b\alpha$ with $a,b\in F$, and we want to show that $a,b\in\Of$. The trace of $\beta$ over $F$ belongs to $\Of$ and is equal to $b$, so that $b\in\Of$: but then $a=\beta-b\alpha$ is an integral element over $\F_2(T)$ which belongs to $F$, so that $a\in\Of$.

Now, we show that $K/F$ is unramified at every finite place: this is equivalent to showing that the different ideal of the extension is trivial. Since $\Ok=\Of[\alpha]$, the different ideal is the principal ideal generated by $g'(\alpha)$: but $g'(X)=1$ since we are considering fields with characteristic 2, and thus the ideal generated by $g'(\alpha)$ is trivial.
\end{proof}

\begin{coro}
Let $K_1\subset K_2\subset\cdots\subset K_n$ be a tower of quadratic extensions of global function fields over $\F_2(T)$. Then $K_n/K_1$ is unramified at finite places.
\end{coro}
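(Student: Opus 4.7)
The plan is to reduce this corollary to the lemma immediately above by peeling off one quadratic step at a time and invoking multiplicativity of the ramification index in a tower. Specifically, given any finite prime $\mathfrak{p}$ of $K_1$ and any prime $\mathfrak{P}$ of $K_n$ lying above it, I would choose the intermediate primes $\mathfrak{P}_i \coloneqq \mathfrak{P} \cap \mathcal{O}_{K_i}$ for $i=1,\dots,n$ (so $\mathfrak{P}_1=\mathfrak{p}$ and $\mathfrak{P}_n=\mathfrak{P}$) and write
\[
e(\mathfrak{P}\mid \mathfrak{p}) \; = \; \prod_{i=1}^{n-1} e(\mathfrak{P}_{i+1}\mid \mathfrak{P}_i).
\]
Each factor on the right concerns the quadratic extension $K_{i+1}/K_i$, which is a quadratic extension of global function fields over $\F_2(T)$, so the preceding lemma forces $e(\mathfrak{P}_{i+1}\mid \mathfrak{P}_i)=1$. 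Hence the product is $1$, which is what "unramified at every finite place of $K_n/K_1$" means.

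The only conceptual point to double-check is that the lemma applies to each step: the lemma is stated for quadratic extensions of global function fields over $\F_2(T)$, and each $K_i$ with $i\ge 1$ is a global function field over $\F_2(T)$ (being a finite extension of $K_1$, which itself contains $\F_2(T)$), so this is automatic. I do not anticipate any real obstacle; the argument is just transitivity of "unramified" along a tower, which in the differential language would equivalently follow from the tower formula
\[
\mathfrak{D}_{K_n/K_1} \; = \; \mathfrak{D}_{K_n/K_{n-1}} \cdot \bigl(\mathfrak{D}_{K_{n-1}/K_1} \mathcal{O}_{K_n}\bigr),
\]
combined with the lemma's statement that each $\mathfrak{D}_{K_{i+1}/K_i}$ is trivial at finite places. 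Either formulation yields the corollary in a single induction on $n$.
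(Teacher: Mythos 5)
Your argument is correct and is exactly the (implicit) justification the paper intends: the corollary is stated without proof because it follows from the preceding lemma by multiplicativity of ramification indices (equivalently, the tower formula for the different) applied to each quadratic step. No gaps.
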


\begin{coro}
Let $K(p)/\F_2(T)$ be defined by the polynomial \eqref{polKp}. Then its Galois closure $N(p)/\F_2(T)$ is unramified at finite places. 
\end{coro}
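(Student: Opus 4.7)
The plan is to realize $N(p)$ as a subfield of a composite of separable quadratic Artin-Schreier extensions of $\F_2(T)$ and then invoke the preceding tower corollary. From the structural description obtained in the proof of the previous theorem we already know that $N(p) = \F_2(T)(\beta+\alpha_{T^p}, \alpha_T, \alpha_{T^3})$, and therefore $N(p)$ is contained in the larger field
\[
    M(p) := \F_2(T)(\zeta, \alpha, \beta, \alpha_T, \alpha_{T^3}, \alpha_{T^p}).
\]

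I would then construct $M(p)$ as a tower of (at most) quadratic extensions of $\F_2(T)$ by successively adjoining roots of Artin-Schreier polynomials of degree $2$: first $\zeta$ (root of $X^2+X+1$), then $\alpha$ (root of $X^2+X+\zeta T$), then $\beta$ (root of $X^2+X+(T^2+\zeta T)\alpha$), and finally $\alpha_T$, $\alpha_{T^3}$, $\alpha_{T^p}$, each a root of some $X^2+X+T^k$ with $k\in\{1,3,p\}$. The irreducibility of the first three polynomials over their respective base fields was verified in step 1) of the preceding theorem; the polynomials $X^2+X+T^k$ are visibly irreducible over $\F_2(T)$ (since $T^k$ is not of the form $a^2+a$ with $a\in\F_2[T]$), so at each remaining step the adjunction produces either the trivial extension or a separable quadratic Artin-Schreier extension of the field at the previous level.

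By the preceding lemma, each genuine quadratic step in this tower is unramified at every finite place, and trivial steps are trivially unramified. Multiplicativity of the ramification index in towers then gives that $M(p)/\F_2(T)$ is unramified at finite places, and applying multiplicativity once more to the chain $\F_2(T)\subset N(p)\subset M(p)$ transfers this property to $N(p)/\F_2(T)$. Since this is essentially a formal consequence of the structural description of $N(p)$ combined with the characteristic $2$ Artin-Schreier ramification lemma, I do not expect a genuine obstacle; the only bookkeeping to verify is that each generator of $M(p)$ really does give a separable Artin-Schreier step, which is automatic from the way they are defined.
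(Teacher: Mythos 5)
Your argument is correct and is essentially the paper's: the paper simply exhibits the explicit quadratic tower $N(p)/M/K_{\gamma}/\F_2(T)(\alpha+\zeta\alpha_T)/\F_2(T)(\alpha_T)/\F_2(T)$ (all steps being separable quadratic, hence unramified at finite places by the lemma) and invokes the preceding tower corollary. Your only deviation is to pass to the larger composite $M(p)$ of Artin--Schreier extensions and then descend to the subfield $N(p)$ via multiplicativity of ramification indices, which is a harmless and correct extra step.
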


\begin{proof}
We have the tower of quadratic extensions $N(p)/M/K_{\gamma}/\F_2(T)(\alpha+\zeta\alpha_T)/\F_2(T)(\alpha_T)/\F_2(T)$.
\end{proof}

\begin{coro}
The extensions $K(p)/\F_2(T)$ and $K'(p)/\F_2(T)$ have the same lifted Goss zeta function.
\end{coro}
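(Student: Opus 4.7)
The plan is to combine three ingredients that are already in place. First, the preceding theorem shows that $K(p)$ and $K'(p)$ are arithmetically equivalent over $\F_2(T)$. Second, the corollary immediately before this one establishes that their common Galois closure $N(p)/\F_2(T)$ is unramified at every finite place. Third, the corollary stated just after Theorem \ref{ThmAnalyticEquiv} asserts precisely that these two hypotheses together imply equality of the lifted Goss zeta functions. So the proof is essentially a one-line citation: apply that corollary to the pair $(K(p), K'(p))$.

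If one prefers to make the logical chain explicit rather than quoting the prior corollary verbatim, the route is: by the first corollary following the main theorem of Section~2, arithmetic equivalence together with an unramified Galois closure forces the set of exceptional primes (where splitting types might disagree) to be empty; but then $K(p)$ and $K'(p)$ are arithmetically equivalent with no exceptions, which is exactly the hypothesis of Theorem \ref{ThmAnalyticEquiv}, and the conclusion of that theorem yields $\zeta_{K(p)/\F_2(T)}^{[0]}(s) = \zeta_{K'(p)/\F_2(T)}^{[0]}(s)$.

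There is no real obstacle here: all the substantive work has already been done in the preceding theorem (constructing the siblings and identifying their Galois closure as a tower of Artin--Schreier quadratic extensions) and in the lemma on unramifiedness of Artin--Schreier quadratic extensions in characteristic $2$. The present corollary is simply the analytic shadow of those results, and the proof reduces to verifying the hypotheses of an existing general statement.
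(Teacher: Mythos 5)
Your proposal is correct and follows essentially the same route as the paper: invoke the arithmetic equivalence of $K(p)$ and $K'(p)$, the unramifiedness at finite places of their Galois closure $N(p)$ to conclude that the set of exceptions is empty, and then apply Theorem \ref{ThmAnalyticEquiv} (equivalently, the corollary immediately following it). Nothing further is needed.
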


\begin{proof}
The two fields are arithmetically equivalent over $\F_2(T)$ and contained in the unramified (at finite places) extension $N(p)/\F_2(T)$, so they are both unramified at finite places over $\F_2(T)$. Thus they are equivalent over $\F_2(T)$ with no exceptions, and so they have the same lifted Goss zeta function by Theorem \ref{ThmAnalyticEquiv}.
\end{proof}

We conclude this section by giving an example of a finite prime of $\F_4(T)$ with different splitting type in $K$ and $K'$, showing explicitly that the two fields are not equivalent over $\F_4(T)$.

Let $f(T)\coloneqq T^8+T^6+T^5+T^3+1$: this polynomial is irreducible in $\F_2[T]$ and decomposes as product of two irreducible polynomials of degree 4 in $\F_4[T].$ The two factors can be detected in the following way: since the extension is unramified, we know that if $(f(T))\F_4[T] = \mathfrak{p}\cdot\mathfrak{q}$, then $\mathfrak{p} = (f(T), \zeta + g(T))$ and $\mathfrak{q}=(f(T), \zeta+g(T)+1)$, where $g(T)$ is a polynomial in $\F_2[T]$ with degree $\leq 7$ such that $x^2+x+1 = (x+g(T))(x+g(T)+1)$ in the quotient field $k\coloneqq \F_2[T]/(f(T))$. Since the degree of $f(T)$ is even, this field always contain an isomorphic copy of $\F_4$, and thus $x^2+x+1$ is reducible: the element $g(T)$ is a chosen third root of unit in $k$.

One can verify that, for this choice of $k$, the polynomial $T^6+T^5+T^4+T^3+T^2+T$ is a primitive third root of unity. Hence we have
$$\mathfrak{p} = (T^8+T^6+T^5+T^3+1,\text{ } T^6+T^5+T^4+T^3+T^2+T+\zeta),$$
$$\mathfrak{q} = (T^8+T^6+T^5+T^3+1,\text{ } T^6+T^5+T^4+T^3+T^2+T+1+\zeta).$$
These are prime ideals in $\F_4[T]$, which is a PID: a unique generator can be found by applying Euclidean division to the two generators of the ideal, and these generators are $T^4+\zeta T^2+(\zeta+1)T+(\zeta+1)$ for $\mathfrak{p}$ and $T^4+(\zeta+1) T^2+\zeta T+\zeta$.

\begin{thm}
The prime $\mathfrak{p}$ has different splitting types in $K$ and $K'$.
\end{thm}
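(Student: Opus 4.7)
The plan is to analyze the splitting of $\mathfrak{p}$ through the tower $\F_4(T)\subset F \subset K$ (and its variant $\F_4(T)\subset F \subset K'$), applying at each of the two quadratic layers an Artin-Schreier splitting criterion. Since all quadratic Artin-Schreier extensions in characteristic $2$ are unramified at every finite prime by the preceding lemma, the splitting of a finite unramified prime $\mathfrak{P}\subset\Of$ in $F(\theta)/F$ with $\theta^2+\theta=c$ is governed by an additive trace: $\mathfrak{P}$ splits in $F(\theta)$ if and only if $\Tr_{(\Of/\mathfrak{P})/\F_2}(c\bmod\mathfrak{P})=0$, and is inert otherwise.

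The first step is to determine the splitting of $\mathfrak{p}$ in $F=\F_4(T)(\alpha)$ with $\alpha^2+\alpha=\zeta T$. The residue field at $\mathfrak{p}$ is $\F_4[T]/\mathfrak{p}\cong\F_{256}$, and I would compute $\Tr_{\F_{256}/\F_2}(\zeta T \bmod \mathfrak{p})$ as the sum of the eight Frobenius conjugates $\sum_{i=0}^{7}(\zeta T)^{2^i}$ after reducing each $T^{2^i}$ modulo the defining quartic of $\mathfrak{p}$. Note that if $\mathfrak{p}$ were inert in $F$, the unique prime $\mathfrak{P}$ above it would have residue field $\F_{65536}$, and $\Tr_{\F_{65536}/\F_2}(T^3)$ would automatically vanish since $T^3$ already lies in the index-$2$ subfield $\F_{256}$; the upper Artin-Schreier layer could then not distinguish $K$ from $K'$. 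The theorem therefore tacitly requires, and the computation must confirm, that $\mathfrak{p}$ splits in $F$ as $\mathfrak{P}_1\mathfrak{P}_2$ with $\Of/\mathfrak{P}_i\cong \F_{256}$.

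The second step applies the same criterion to each $\mathfrak{P}_i$ for the upper extensions. The Artin-Schreier defining elements $(T^2+\zeta T)\alpha$ and $(T^2+\zeta T)\alpha+T^3$ of $K/F$ and $K'/F$ differ only by $T^3$, hence $\mathfrak{P}_i$ behaves identically in $K$ and $K'$ precisely when $\Tr_{\F_{256}/\F_2}(T^3\bmod\mathfrak{p})=0$ (Galois-invariance of this value under $\Gal(F/\F_4(T))$ makes it independent of $i$). The key computation should yield $\Tr_{\F_{256}/\F_2}(T^3\bmod\mathfrak{p})=1$, forcing each $\mathfrak{P}_i$ to split in exactly one of $K,K'$. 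Combined with a verification that the two primes $\mathfrak{P}_i$ behave identically in $K$ (equivalently $\Tr_{\F_{256}/\F_2}((T^2+\zeta T)\bmod\mathfrak{p})=0$, which by the split condition $\Tr(\zeta T)=0$ reduces to $\Tr(T)=0$), this pins the splitting types of $\mathfrak{p}$ down to $(1,1,1,1)$ in one of $K,K'$ and $(2,2)$ in the other.

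The main obstacle is purely computational: reducing the Frobenius conjugates $T^{2^i}$ for $0\le i\le 7$ modulo the quartic generator of $\mathfrak{p}$ in $\F_4[T]$ and evaluating the three required traces in $\F_{256}$. This is routine but arithmetic-heavy by hand, so the cleanest way to finish is to execute the computation in PARI/GP or Magma; alternatively, one may factor the minimal polynomials of the chosen primitive elements of $K/\F_4(T)$ and $K'/\F_4(T)$ modulo $\mathfrak{p}$ inside $(\F_4[T]/\mathfrak{p})[X]$, the unramifiedness from the preceding lemma ensuring that Kummer-Dedekind applies without any index/conductor correction and that the factorisation types read off directly translate into the claimed distinct splitting types.
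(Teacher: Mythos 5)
Your plan is correct and follows the same skeleton as the paper's proof: trace $\mathfrak{p}$ through the tower $\F_4(T)\subset F\subset K$ (resp. $K'$) and decide each quadratic Artin--Schreier layer in the residue field $k=\F_4[T]/\mathfrak{p}\cong\F_{256}$. The difference is in how each layer is decided. The paper substitutes the explicit residues of $\zeta$ and $\alpha$ and directly factors the resulting polynomials $X^2+X+\bar c$ in $k$ (verified in PARI/GP), finding both $\mathfrak{p}_i$ inert in $K$ and split in $K'$. You instead invoke the additive criterion $\mathfrak{P}$ splits iff $\Tr_{\kappa/\F_2}(\bar c)=0$, and then exploit additivity of the trace: the constants for $K$ and $K'$ differ by $T^3$, so each $\mathfrak{P}_i$ behaves differently in $K$ and $K'$ iff $\Tr_{\F_{256}/\F_2}(T^3)=1$; and the constants at $\mathfrak{P}_1$ versus $\mathfrak{P}_2$ differ by $T^2+\zeta T$, so the two primes behave identically iff $\Tr(T)=0$ (given $\Tr(\zeta T)=0$ from the split in $F$). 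This buys something real: $\Tr(T)$ and $\Tr(T^3)$ are power sums of the roots of $f$, hence readable off Newton's identities ($p_1=e_1=0$, $p_3=e_3=1$ here) with no computer at all, and your remark that an inert $\mathfrak{p}$ in $F$ could never separate $K$ from $K'$ (since $\Tr_{\F_{65536}/\F_{256}}$ kills $T^3$) explains structurally why the first layer must split — something the paper only observes numerically. The one computation you cannot avoid is $\Tr_{\F_{256}/\F_2}(\zeta T)$, which requires the explicit cube root of unity $g(T)$ modulo $f$, exactly the datum the paper computes. Two small cautions: the dichotomy ``split or inert'' and the applicability of Kummer--Dedekind rest not on unramifiedness per se but on the fact, established inside the proof of the preceding lemma, that $\Ok=\Of[\alpha]$ at each layer (together with separability of $X^2+X+\bar c$); and your final sentence should cite that equality rather than unramifiedness. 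With the three traces actually evaluated ($\Tr(\zeta T)=0$, $\Tr(T)=0$, $\Tr(T^3)=1$), your argument yields splitting types $(2,2)$ and $(1,1,1,1)$ in some order, which suffices for the statement and agrees with the paper's explicit assignment.
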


\begin{proof}
Let us first study how $\mathfrak{p}$ decomposes in $F$. We must see how the polynomial $X^2+X+\zeta T$ decomposes in the quotient field $\F_4[T]/\mathfrak{p}$: this field is
$$\frac{\F_4[T]}{(T^8+T^6+T^5+T^3+1, \text{ }T^6+T^5+T^4+T^3+T^2+T+\zeta)}$$
and is thus identified with $k$ under the assumption that $\zeta$ is equal to $T^6+T^5+T^4+T^3+T^2+T$. Therefore we just want to study how the polynomial
$$X^2+X+(T^6+T^5+T^4+T^3+T^2+T)T = X^2+X+T^7+T^6+T^5+T^4+T^3+T^2$$
decomposes in $k$. One finds that is equal to
$$(X+T^7+T^5+T^4+T)(X+T^7+T^5+T^4+T+1)$$
and therefore $\mathfrak{p}$ splits in $F$ as product of the prime ideals
$$\mathfrak{p}_1\coloneqq (T^8+T^6+T^5+T^3+1,\text{ } T^6+T^5+T^4+T^3+T^2+T+\zeta,\text{ }T^7+T^5+T^4+T+\alpha),$$
$$\mathfrak{p}_2\coloneqq (T^8+T^6+T^5+T^3+1,\text{ } T^6+T^5+T^4+T^3+T^2+T+\zeta,\text{ }T^7+T^5+T^4+T+1+\alpha).$$
These primes have inertia 1 over $\F_4(T)$ and over $\F_2(T)$, so that when we study their decomposition in $K$ and $K'$ we just look for the factorization of the defining polynomials in $k$ after identifying $\alpha$ and $\zeta$ with the corresponding polynomials in the ideals.

We look at their decomposition in the extension $K/F$, which is defined by the polynomial $X^2+X+(T^2+\zeta T)\alpha$. Let us begin with $\mathfrak{p}_1$ (meaning that $\zeta\to T^6+T^5+T^4+T^3+T^2+T$ and $\alpha\to T^7+T^5+T^4+T$). The polynomial becomes $X^2+X+T^7+T^5+T^4+T^3+T^2+T$, and one can verify that it has no roots in $k$, implying that $\mathfrak{p}_1$ is inert in $K$. For what concerns $\mathfrak{p}_2$, the same procedure with $\alpha+1$ instead of $\alpha$ gives $X^2+X+T^6+T^2+T$, which again has no roots in $k$. Therefore the two primes remain inert in $K$.

Looking instead at their decomposition in $K'/F$, the defining polynomial is $X^2+X+(T^2+\zeta T)\alpha+T^3$, and so we just need to add $T^3$ to the previously obtained polynomials with coefficients in $k$. But now, the two new polynomials, $X^2+X+T^7+T^5+T^4+T^2+T$ for $\mathfrak{p}_1$ and $X^2+X+T^6+T^3+T^2+T$ for $\mathfrak{p}_2$, split in $k$.

This means that $\mathfrak{p}$ has splitting type $(2,2)$ in $K$ and $(1,1,1,1)$ in $K'$. One can verify, following the same steps, that $\mathfrak{q}$ has splitting type $(1,1,1,1)$ in $K$ and $(2,2)$ in $K'$, so that in the end the ideal $(f(T))$ has splitting type $(1,1,1,1,2,2)$ in both $K$ and $K'$ (as we expected).
\end{proof}


\begin{thebibliography}{CKVdZ10}

\bibitem[BdS02]{bosma2002arithmetically}
Wieb Bosma and Bart de~Smit.
\newblock On arithmetically equivalent number fields of small degree.
\newblock In {\em International Algorithmic Number Theory Symposium}, pages
  67--79. Springer, 2002.

\bibitem[BM83]{butler1983transitive}
Gregory Butler and John McKay.
\newblock The transitive groups of degree up to eleven.
\newblock {\em Communications in Algebra}, 11(8):863--911, 1983.

\bibitem[Bou07]{bourbaki2007topologie}
Nicolas Bourbaki.
\newblock {\em Topologie g{\'e}n{\'e}rale: Chapitres 1 {\`a} 4}, volume~3.
\newblock Springer Science \& Business Media, 2007.

\bibitem[CBFS11]{cannon2011handbook}
John Cannon, Wieb Bosma, Claus Fieker, and Allan Steel.
\newblock Handbook of magma functions.
\newblock 2011.

\bibitem[CKVdZ10]{cornelissen2010arithmetic}
Gunther Cornelissen, Aristides Kontogeorgis, and Lotte Van~der Zalm.
\newblock Arithmetic equivalence for function fields, the goss zeta function
  and a generalisation.
\newblock {\em Journal of Number Theory}, 130(4):1000--1012, 2010.

\bibitem[Con13]{conrad2013galois}
Keith Conrad.
\newblock Galois groups of cubics and quartics in all characteristics.
\newblock {\em Unpublished note}, 2013.

\bibitem[dSP94]{desmitperlis1994zeta}
Bart de~Smit and Robert Perlis.
\newblock Zeta functions do not determine class numbers.
\newblock {\em Bulletin of the American Mathematical Society}, 31(2):213--215,
  1994.

\bibitem[FJ06]{fried2006field}
Michael~D Fried and Moshe Jarden.
\newblock {\em Field arithmetic}, volume~11.
\newblock Springer Science \& Business Media, 2006.

\bibitem[FOdf]{fontaine2008theory}
Jean-Marc Fontaine and Yi~Ouyang.
\newblock Theory of p-adic galois representations.
\newblock 2008, available at
  https://www.imo.universite-paris-saclay.fr/~fontaine/galoisrep.pdf.

\bibitem[Gas26]{gassmann1926bemerkungen}
Fritz Gassmann.
\newblock Bemerkungen zur vorstehenden arbeit von hurwitz.
\newblock {\em Math. z}, 25:665--675, 1926.

\bibitem[Gos12]{goss2012basic}
David Goss.
\newblock {\em Basic structures of function field arithmetic}.
\newblock Springer Science \& Business Media, 2012.

\bibitem[Iwa53]{iwasawa1953rings}
Kenkichi Iwasawa.
\newblock On the rings of valuation vectors.
\newblock {\em Annals of Mathematics}, pages 331--356, 1953.

\bibitem[Kli98]{klingen1998arithmetical}
Norbert Klingen.
\newblock {\em Arithmetical similarities: Prime decomposition and finite group
  theory}.
\newblock Oxford University Press, 1998.

\bibitem[KM]{klunersMalle}
J\"{u}rgen Kl\"{u}ners and Gunter Malle.
\newblock A database for number fields.
\newblock Available at \url{http://galoisdb.math.upb.de/home}.

\bibitem[Kom84]{komatsu1984adele}
Keiichi Komatsu.
\newblock On adele rings of arithmetically equivalent fields.
\newblock {\em Acta Arithmetica}, 2(43):93--95, 1984.

\bibitem[KR94]{kani1994idempotent}
Ernst Kani and Michael Rosen.
\newblock Idempotent relations among arithmetic invariants attached to number
  fields and algebraic varieties.
\newblock {\em Journal of Number Theory}, 46(2):230--254, 1994.

\bibitem[{PAR}20]{pari}
{PARI~Group}, Univ. Bordeaux.
\newblock {\em {PARI/GP version {\tt 2.12.0}}}, 2020.
\newblock available at \url{http://pari.math.u-bordeaux.fr/}.

\bibitem[Per77]{perlis1977equation}
Robert Perlis.
\newblock On the equation {$\zeta _{K}(s)=\zeta _{K'}(s)$}.
\newblock {\em Journal of number theory}, 9(3):342--360, 1977.

\bibitem[Sch17]{schneider2017galois}
Peter Schneider.
\newblock {\em Galois Representations and (phi, Gamma)-modules}, volume 164.
\newblock Cambridge University Press, 2017.

\bibitem[Ser65]{serre1965zeta}
Jean-Pierre Serre.
\newblock Zeta and l functions.
\newblock In {\em Arithmetical Algebraic Geometry, Proc. of a Conference held
  at Purdue Univ., Dec. 5-7, 1963}. Harper and Row, 1965.

\bibitem[Ser13]{serre2013local}
Jean-Pierre Serre.
\newblock {\em Local fields}, volume~67.
\newblock Springer Science \& Business Media, 2013.

\bibitem[Sol16]{solomatin2016artin}
Pavel Solomatin.
\newblock On artin l-functions and gassmann equivalence for global function
  fields.
\newblock {\em arXiv preprint arXiv:1610.05600}, 2016.

\bibitem[SP95]{stuart1995new}
Donna Stuart and Robert Perlis.
\newblock A new characterization of arithmetic equivalence.
\newblock {\em Journal of Number theory}, 53(2):300--308, 1995.

\bibitem[Sti09]{stichtenoth2009algebraic}
Henning Stichtenoth.
\newblock {\em Algebraic function fields and codes}, volume 254.
\newblock Springer Science \& Business Media, 2009.

\bibitem[{The}13]{lmfdb}
{The LMFDB Collaboration}.
\newblock The {L}-functions and modular forms database.
\newblock available at \url{http://www.lmfdb.org}, 2013.

\bibitem[Tur78]{turner1978adele}
Stuart Turner.
\newblock Adele rings of global field of positive characteristc.
\newblock {\em Boletim da Sociedade Brasileira de
  Matem{\'a}tica-Bulletin/Brazilian Mathematical Society}, 9(1):89--95, 1978.

\end{thebibliography}
\end{document}